\newtheorem{theo}{Theorem}[section]
\newtheorem{lem}{Lemma}[section]
\newtheorem{remark}{Remark}[section]
\newcommand{\be}{\begin{equation}}
\newcommand{\ee}{\end{equation}}
\newcommand\bes{\begin{eqnarray}} \newcommand\ees{\end{eqnarray}}
\newcommand{\bess}{\begin{eqnarray*}}
\newcommand{\eess}{\end{eqnarray*}}
\newcommand\ep{\varepsilon}
\newcommand\qq{\eqref}
\newcommand\kk{\left}
\newcommand\rr{\right}
\newcommand\dd{\displaystyle}
\newcommand\dx{{\rm d}x}
\newcommand\ds{{\rm d}s}
\newcommand\dv{\frac{d}{dt}}
\newcommand\vp{\varphi}
\newcommand\df{\dd\frac}
\newcommand\kp{\kappa}
\newcommand\lm{\lambda}
\newcommand\ii{\int_\oo}
\newcommand\yy{\infty}
\newcommand\R{\mathbb{R}}
\newcommand\oo{\Omega}
\newcommand\tr{\Delta}
\newcommand\nn{\nabla}
\newcommand\nm{\nonumber}
\newcommand\ma{\Sigma}
\newcommand\pl{\partial}
\newcommand\tk{{\tilde k}}
\newcommand\tf{{\tilde T}}
\newcommand\te{\theta}
\newcommand\mb{\Gamma}
\begin{document}
\begin{center}{\Large\bf Global bounded solution of the higher-dimensional}\\[2mm]
{\Large\bf forager-exploiter model with/without growth sources}\footnote{This work was
supported by NSFC Grants 11771110, 11971128}\\[4mm]
 {\Large  Jianping Wang, \ \ Mingxin Wang\footnote{Corresponding author. {\sl E-mail}: mxwang@hit.edu.cn}}\\[1mm]
{School of Mathematics, Harbin Institute of Technology, Harbin 150001, PR China}
\end{center}

\begin{quote}
\noindent{\bf Abstract.} We investigate the higher-dimensional forager-exploiter model with homogeneous Neumann boundary condition
\bess
 \left\{\begin{array}{lll}
 u_t=\tr u-\chi\nn\cdot(u\nn w)+\eta_1(u-u^m),&x\in\oo,\ \ t>0,\\[1mm]
 v_t=\tr v-\xi\nn\cdot(v\nn u)+\eta_2(v-v^l),&x\in\oo,\ \ t>0,\\[1mm]
 w_t=\tr w-\lm(u+v)w-\mu w+r(x,t),&x\in\oo,\ \ t>0,\\[1mm]
 u(x,0)=u_0(x),\ v(x,0)=v_0(x),\ w(x,0)=w_0(x), &x\in\oo,
 \end{array}\right.
 \eess
where $\oo\subset\R^n$ is a bounded domain, the constants $\chi,\xi,\lm,\mu$ are positive, $m, l>1$ and $\eta_1,\eta_2\ge0$, $r\in C^1(\bar\oo\times[0,\yy))\cap L^\yy(\oo\times(0,\yy))$. The nonnegative initial functions $u_0,w_0\in W^{2,\yy}(\oo)\setminus \{0\}$ and $v_0\in W^{1,\yy}(\oo)\setminus \{0\}$. It will be shown that, for $\eta_1=\eta_2=0$ and $n\ge2$, on the one hand, there is $\ep>0$ such that, if
\[\|u_0\|_{W^1_{2p}(\oo)}+\|w_0\|_{W^1_{2(p+1)}(\oo)}+\|r\|_{L^\yy(\oo\times(0,\yy))}\le \ep\ \ {\rm with}\ \ p=\min\kk\{k\in\mathbb{N}:\ k>{n}/{2}\rr\},\]
then this system is globally solvable in the classical sense; On the other hand, the weak taxis effects rule out any blow-up. Secondly, for $\eta_1>0, m\ge2,\eta_2=0$ and $n=2$, we establish the global solvability of this problem provided only $\xi$ is sufficiently small. Finally, when $n=2$, $\eta_1,\eta_2>0$, we find a condition for the logistic degradation rates $m, l$ that ensures the global existence of the classical solutions.

\noindent{\bf Keywords:} Forager-exploiter model; super-logistic source; classical solution; Global existence; Boundeness.

\noindent {\bf AMS subject classifications (2010)}:
35B40, 35K57, 92C17.
 \end{quote}

 \section{Introduction}
 \setcounter{equation}{0} {\setlength\arraycolsep{2pt}
This paper concerns the higher-dimensional forager-exploiter model with different taxis strategies for two groups in search of food (\cite{Blcak-Arxiv2019,Tania2012,TaoW-2019-forager}):
\bes
 \left\{\begin{array}{lll}
 u_t=\tr u-\chi\nn\cdot(u\nn w)+\eta_1(u-u^m),&x\in\oo,\ \ t>0,\\[1mm]
 v_t=\tr v-\xi\nn\cdot(v\nn u)+\eta_2(v-v^l),&x\in\oo,\ \ t>0,\\[1mm]
 w_t=\tr w-\lm(u+v)w-\mu w+r(x,t),&x\in\oo,\ \ t>0,\\[1mm]
 \pl_\nu u=\pl_\nu v=\pl_\nu w=0,\ \ &x\in\pl\oo, \ \ t>0,\\[1mm]
 u(x,0)=u_0(x),\ v(x,0)=v_0(x),\ w(x,0)=w_0(x), &x\in\oo,
 \end{array}\right.\label{1.1}
 \ees
where $\oo\subset\R^n$ is a bounded domain with smooth boundary $\pl\oo$, $\pl_\nu$ denotes differentiation with respect to the outward normal vector $\nu$ on $\pl\oo$. The unknown functions $u,v$ and $w$ are the densities of the forager population, exploiter population and nutrient, respectively. The constants $\chi,\xi,\lm,\mu$ are supposed to be positive, $m,l>1$ and $\eta_1,\eta_2$ are nonnegative. The given nonnegative function $r(x,t)$ is the production rate of nutrient and satisfies
  \bes
  r\in C^1(\bar\oo\times[0,\yy))\cap L^\yy(\oo\times(0,\yy)).
  \label{x1}\ees
It is assumed that, besides the random diffusions, the first species (foragers) move toward the increasing nutrient gradient direction, while the second species (exploiters) follow the foragers to find the food indirectly.

The model \eqref{1.1} was initially proposed by Tania et al. \cite{Tania2012}. When $n=1$, $\eta_1=\eta_2=0$ and $r$ is a nonnegative constant, Tao and Winkler in \cite{TaoW-2019-forager} proved the global existence of the classical solution to \eqref{1.1}, and showed that the solution stabilizes to a positive constant equilibrium exponentially provided that $\min\{\ii u_0,\ii v_0\}$ is small enough. When $n\ge1$ and $\eta_1=\eta_2=0$, by taking into account the volume-filling effect (i.e., $u_t=\tr u-\nn\cdot(u(1-u)\nn w)$, $v_t=\tr v-\nn\cdot(v(1-v)\nn u)$ and $0\le u_0,v_0\le1$), the global existence of the classical solution was builded in \cite{Liuyuanyuan}. For $\eta_1=\eta_2=0$ and $n\ge1$, under an explicit condition linking $r$ and $w_0$, the global existence of generalized solutions of system \eqref{1.1} has been obtained and the long time behavior of the solution was established when $r$ decays suitably in time (\cite{Winkler-M3AS2019}). When $n=2$ and $\eta_1,\eta_2>0$, Black \cite{Blcak-Arxiv2019} showed that \eqref{1.1} has at least one global generalized solution if $m>\sqrt{2}+1$ and $\min\{m,l\}>(m+1)/(m-1)$, and proved that the generalized solution actually becomes a classical one after some waiting time $T_*$ (possibly large) under conditions: $m,l>\sqrt{2}+1$, $r\ge 0$ satisfies \eqref{x1} and $r\in L^1((0,\yy);\,L^\yy(\oo))$.

It is noticed that, none of the above mentioned works involve the global existence of classical solutions to \eqref{1.1} in high dimensions. The motivation of this paper is to establish the global solvability of \eqref{1.1} in high dimensions in the classical sense.

In order to better describe our contribution, we recall some related works on the original minimal Keller-Segel system in bounded domain
\bes
 \left\{\begin{array}{lll}
 u_t=\tr u-\nn\cdot(u\nn v),&x\in\oo,\ \ t>0,\\[1mm]
 v_t=\tr v-v+u,&x\in\oo,\ \ t>0,\\[1mm]
 \pl_\nu u=\pl_\nu v=0,\ \ &x\in\pl\oo, \ \ t>0,\\[1mm]
 u(x,0)=u_0(x),\ v(x,0)=v_0(x), &x\in\oo,
 \end{array}\right.\label{1.2}
 \ees
where $\oo\subset\R^n$ ($n\ge2$), $u_0\in C(\bar\oo)$ and $v_0\in W^1_\sigma(\oo)$ with $\sigma>n$. It was shown in \cite{Cao-DCDSA} that, there exist $\ep,\lm>0$ such that, if
\[\|u_0\|_{L^{n/2}(\oo)}\le\ep,\ \ \ \|v_0\|_{W^1_n(\oo)}\le\ep,\]
then \eqref{1.2} admits a unique nonnegative classical solution $(u,v)$ which exists globally in time and satisfies
\[\|u(\cdot,t)-\bar u_0\|_{L^\yy(\oo)}+\|v(\cdot,t)-\bar v_0\|_{L^\yy(\oo)}\le Ce^{-\lm t},\ \ t>0\]
with $\bar u_0=\frac1{|\oo|}\ii u_0$, $\bar v_0=\frac1{|\oo|}\ii v_0$ and some $C>0$. For the minimal Keller-Segel model in $\R^n$, the boundedness result holds provided $\|u_0\|_{L^r(\R^n)}$ and $\|v_0\|_{W^1_q(\R^n)}$ small enough with $r>n/2$ and $q\ge n$ (\cite{Corrias-Petthame}). For the prey-taxis system
\bes
 \left\{\begin{array}{lll}
 u_t=\tr u-\nn\cdot(u\nn v)+uv,&x\in\oo,\ \ t>0,\\[1mm]
 v_t=\tr v+v\kk(k-v\rr)-uv,&x\in\oo,\ \ t>0,\\[1mm]
 \pl_\nu u=\pl_\nu v=0,\ \ &x\in\pl\oo, \ \ t>0,\\[1mm]
 u(x,0)=u_0(x),\ v(x,0)=v_0(x), &x\in\oo,
 \end{array}\right.\label{1.3}
 \ees
it was shown in \cite{WjpW-ZAMP,W-S-W2017} that, under a small condition on $\max\{\|v_0\|_{L^\yy(\oo)},k\}$, the classical solution to \eqref{1.3} is global in time and bounded. Inspired by above mentioned works (the classical Keller-Segel model \cite{Cao-DCDSA,Corrias-Petthame} and prey-taxis system \cite{JinW, WjpW-ZAMP,W-S-W2017}), it is reasonable to conjecture the global existence of the classical solutions to \eqref{1.1} under some smallness conditions. However, unlike the simple structure in the minimal Keller-Segel system or the prey-taxis model, the different taxis strategies in the system \eqref{1.1} produce more mathematical challenges and make the analysis work more complicate and difficult.

In this paper we investigate the global bounded solution of \eqref{1.1} for the higher-dimensional case: $n\ge 2$. Throughout this paper, we use $C$, $C'$ and $C_i$ to represent generic positive constants, which may be different in different places. And, for simplicity, we use $\ii f$ and  $\|f\|_p$ to denote $\ii f\dx$ and $\|f\|_{L^p(\oo)}$, respectively. We suppose from now on that
   \[u_0,w_0\in W^2_{\yy}(\oo),\ v_0\in W^1_{\yy}(\oo),\ \ {\rm and}\ \ u_0,v_0,w_0\ge 0,\,\not\equiv 0\ \ {\rm in}\ \bar\oo.\]

Our first statement is that, for the fixed $\chi,\xi>0$, the small initial data can prevent blow-up.
\begin{theo}\label{t1.1}
Let $n\ge 2$, $\eta_1=\eta_2=0$ and $p=\min\kk\{k\in\mathbb{N}:\ k>{n}/{2}\rr\}$. For the fixed $\chi,\xi>0$, there exists $\ep>0$ such that, if
 \bess
 \|u_0\|_{W^1_{2p}(\oo)}+\|w_0\|_{W^1_{2(p+1)}(\oo)}+\|r(x,t)\|_{L^\yy(\oo\times(0,\yy))}\le\ep,
 \eess
then \qq{1.1} admits a unique nonnegative global bounded classical solution $(u,v,w)\in [C(\bar\Omega\times [0,\yy))\cap C^{2,1}(\bar\Omega\times (0,\yy))]^3$. Moreover, there exists $C>0$ such that
\bes
\|u(\cdot,t)\|_{W^1_{2p}(\oo)}+\|v(\cdot,t)\|_{W^1_{2p}(\oo)}+\|w(\cdot,t)\|_{W^1_{2p}(\oo)}\le C,\ \ \forall\ t\in(0,\yy).\label{1.6}
\ees
\end{theo}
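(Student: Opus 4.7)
The approach is a small-data bootstrap on a coupled $W^1_{2p}$-norm, in the spirit of the classical Keller--Segel analysis of \cite{Cao-DCDSA} and the prey-taxis analysis of \cite{W-S-W2017}, adapted to the two-layer taxis structure of \eqref{1.1}. The integer $p$ is chosen so that $2p>n$, whence the Sobolev embedding $W^1_{2p}(\oo)\hk\LY$ turns every uniform $W^1_{2p}$-bound on $(u,v,w)$ into a uniform $\LY$-bound, which rules out blow-up through the standard extensibility criterion. A routine Amann-type local theory first yields a unique nonnegative classical solution $(u,v,w)$ on a maximal interval $[0,\ty)$ satisfying the alternative: either $\ty=\yy$, or $\limsup_{t\nearrow\ty}(\|u(\cd,t)\|_\LY+\|v(\cd,t)\|_\LY+\|w(\cd,t)\|_{W^1_{2(p+1)}(\oo)})=\yy$.

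I would then write each component via Duhamel's formula against the Neumann heat semigroup $(e^{t\tr})_{t\ge 0}$,
\bess
 u(t)&=&e^{t\tr}u_0-\chi\int_0^t e^{(t-s)\tr}\nn\cd(u\nn w)\ds,\\
 v(t)&=&e^{t\tr}v_0-\xi\int_0^t e^{(t-s)\tr}\nn\cd(v\nn u)\ds,\\
 w(t)&=&e^{-\mu t}e^{t\tr}w_0+\int_0^t e^{-\mu(t-s)}e^{(t-s)\tr}\kk[r-\lm(u+v)w\rr]\ds,
\eess
and combine them with the Neumann gradient estimates $\|\nn e^{t\tr}f\|_{L^p}\le C(1+t^{-\frac12-\frac{n}{2}(\frac1q-\frac1p)})\|f\|_{L^q}$ and $\|\nn e^{t\tr}\nn\cd g\|_{L^p}\le C(1+t^{-1-\frac{n}{2}(\frac1q-\frac1p)})\|g\|_{L^q}$ valid for $1\le q\le p\le\yy$. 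The maximum principle applied directly to the $w$-equation immediately yields the global pointwise bound $\|w(\cd,t)\|_\LY\le\max\{\|w_0\|_\LY,\|r\|_\LY/\mu\}\le C\ep$, so that $w$ is a priori small in $\LY$.

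Because the smallness hypothesis involves $u_0,w_0,r$ but \emph{not} $v_0$, I would split the bootstrap into
\[
 M(T):=\sup_{t\in[0,T]}\kk(\|u(\cd,t)\|_{W^1_{2p}(\oo)}+\|w(\cd,t)\|_{W^1_{2p}(\oo)}\rr),\qquad L(T):=\sup_{t\in[0,T]}\|v(\cd,t)\|_{W^1_{2p}(\oo)}.
\]
Plugging $W^1_{2p}\hk\LY$ into the nonlinearities of the three Duhamel identities, choosing exponents $q>2p$ so that the singularity $(t-s)^{-1-\frac{n}{2}(\frac1q-\frac1{2p})}$ is integrable (which is possible precisely because $p>n/2$), and exploiting the extra integrability $\nn w\in L^{2(p+1)}$ granted by $w_0\in W^1_{2(p+1)}$ to absorb one derivative in $\nn\cd(u\nn w)$, one is led to a schematic coupled system
\[
 M(T)\le C\ep+CM(T)^2+C\ep L(T),\qquad L(T)\le C\|v_0\|_{W^1_{2p}(\oo)}+CM(T)L(T),
\]
for all $T\in(0,\ty)$. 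Since $M$ and $L$ are continuous in $T$ and $M(0)\le C\ep$, a two-stage continuity argument shows, provided $\ep$ is chosen sufficiently small in terms of $\|v_0\|_{W^1_{2p}(\oo)}$, that $M(T)\le 2C\ep$ and $L(T)\le 2C\|v_0\|_{W^1_{2p}(\oo)}$ persist on the whole of $[0,\ty)$. The extensibility criterion then forces $\ty=\yy$, and \eqref{1.6} is immediate.

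The main obstacle is the coupling generated by the two different taxis mechanisms: the $v$-estimate needs $\nn u$-control, the $u$-estimate needs $\nn w$-control, and $w$ itself is consumed by $u+v$. Closing the bounds on $(u,v,w)$ therefore requires running gradient Duhamel estimates at \emph{two} different integrability scales in tandem; in particular, the estimate for $v$ ultimately forces one to control $\nn u$ in an $L^{2(p+1)}$-sense, which is why the at-first-glance mismatched hypothesis $w_0\in W^1_{2(p+1)}$ (rather than merely $W^1_{2p}$) appears in the statement, since the extra integrability of $\nn w$ propagates through the $u$-equation into the required bound on $\nn u$. The absence of a smallness hypothesis on $v_0$ is compensated by allowing the threshold $\ep$ to depend on $\|v_0\|_{W^1_{2p}(\oo)}$ but not on $t$, so that \eqref{1.6} is a genuinely uniform bound.
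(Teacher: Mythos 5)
Your plan fails at its central step: the gradient-level Duhamel estimate for $u$ cannot close. To bound $\|\nabla u(\cdot,t)\|_{L^{2p}(\Omega)}$ from $u(t)=e^{t\Delta}u_0-\chi\int_0^t e^{(t-s)\Delta}\nabla\cdot(u\nabla w)\,{\rm d}s$ you must estimate $\|\nabla e^{(t-s)\Delta}\nabla\cdot(u\nabla w)\|_{L^{2p}}$, i.e.\ put \emph{two} derivatives on the semigroup. The smoothing estimate $\|\nabla e^{t\Delta}\nabla\cdot g\|_{L^{r}}\le C\big(1+t^{-1-\frac n2(\frac1q-\frac1r)}\big)e^{-\lambda t}\|g\|_{L^q}$ is valid only for $q\le r$, where the singularity exponent is $\ge 1$ and hence never integrable in time; your proposed fix of ``choosing exponents $q>2p$'' does not work, because on a bounded domain extra integrability of $g$ buys nothing --- one merely embeds $L^q\hookrightarrow L^{2p}$ and is left with the $t^{-1}$ singularity of the $L^{2p}$--$L^{2p}$ estimate (on $\mathbb{R}^n$ the Young-convolution constraint $1+\frac1r-\frac1q\le 1$ forces $q\le r$ likewise). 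Consequently the schematic inequality $M(T)\le C\ep+CM(T)^2+C\ep L(T)$ is not derivable by your method, and the same obstruction recurs for $\nabla v$ against $\nabla\cdot(v\nabla u)$. The only semigroup route at the gradient level is to expand $\nabla\cdot(u\nabla w)=\nabla u\cdot\nabla w+u\Delta w$, which reduces the singularity to $(t-s)^{-\frac12-\cdots}$ but requires controlling $\Delta w$ in some $L^q$ --- second-order information that your first-order bootstrap quantities $M(T),L(T)$ do not carry. (Your rationale for the hypothesis $w_0\in W^1_{2(p+1)}$ is also off: its role is to make $\|\nabla w\|_{L^{2(p+1)}}$ uniformly small, not to upgrade $\nabla u$ to $L^{2(p+1)}$.)

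The paper avoids this trap by running the continuation argument only on the $L^\infty$ norms, with thresholds $A=2\|u_0\|_\infty$, $B=2\|v_0\|_\infty$ fixed in advance --- this is also what makes the absence of smallness on $v_0$ harmless, since $B$ enters the key constants $G_0,H_0$ only multiplied by the small quantity $\|w_0\|_{W^1_{2(p+1)}(\Omega)}+r_*$. For the $L^\infty$ bounds, Duhamel needs only the single-derivative kernel $e^{t\Delta}\nabla\cdot$ with integrable singularity $t^{-\frac12-\frac n{4(p+1)}}$, resp.\ $t^{-\frac12-\frac n{4p}}$ (Lemmas \ref{l3.4}, \ref{l3.5}). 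The gradient information is produced by entirely different tools: maximal Sobolev regularity of Giga--Sohr, localized to unit time windows by a cut-off function, yields the space-time bound $\int_t^{t+\tau}\|\Delta w\|_{L^{p+1}}^{p+1}\le CH_0^{p+1}$ (Lemma \ref{l3.2}); then an energy estimate for $\frac{d}{dt}\int_\Omega|\nabla u|^{2p}$ (Lemma \ref{l3.3}), using the pointwise inequality $\int_\Omega|\nabla u|^{2(p+1)}\le 2(n+4p^2)\|u\|_\infty^2\int_\Omega|\nabla u|^{2(p-1)}|D^2u|^2$, a boundary-trace lemma, and the ODE comparison Lemma \ref{l2.2}, gives the uniform $L^{2p}$ bound on $\nabla u$; the bound on $\nabla v$ is recovered only \emph{after} $T=T_m$ is secured, by repeating the scheme with maximal regularity applied to the $u$-equation (Step 2 of Lemma \ref{l3.6}). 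If you patch your scheme by adjoining exactly these second-order space-time quantities ($\Delta w\in L^{p+1}$, then $\Delta u\in L^{p+1}$) to the bootstrap, you essentially reconstruct the paper's proof.
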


The second conclusion is that, for the given initial data $(u_0,v_0,w_0)$, if both the taxis effects $\chi,\xi$ are weak enough, then the classical solution of \eqref{1.1} exists globally and remain bounded.
\begin{theo}\label{t1.1a}\, Let $n\ge2$ and $\eta_1=\eta_2=0$. For the given initial data $(u_0,v_0,w_0)$, there exists $\chi_0,\xi_0>0$ such that, if
\bess
\chi\le\chi_0,\ \ \ \xi\le\xi_0,
\eess
then \qq{1.1} admits a unique nonnegative global bounded classical solution $(u,v,w)\in [C(\bar\Omega\times [0,\yy))\cap C^{2,1}(\bar\Omega\times (0,\yy))]^3$ which satisfies \eqref{1.6}.
\end{theo}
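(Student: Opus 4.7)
The plan parallels Theorem \ref{t1.1}, replacing smallness of the data by smallness of the taxis coefficients in the closing of a bootstrap. I would first collect three $(\chi,\xi)$-independent preliminaries: local classical solvability of \eqref{1.1} on a maximal interval $[0,T_{\max})$ together with an extensibility criterion forcing blow-up of some higher Sobolev norm whenever $T_{\max}<\yy$; the pointwise bound
$$\|w(\cdot,t)\|_{L^\yy(\oo)}\le\max\kk\{\|w_0\|_{L^\yy(\oo)},\;\mu^{-1}\|r\|_{L^\yy(\oo\ti(0,\yy))}\rr\}=:K_0,$$
which follows from $u,v\ge0$, $\mu>0$ and comparison in the $w$-equation; and the conservation laws $\|u(\cdot,t)\|_1=\|u_0\|_1$, $\|v(\cdot,t)\|_1=\|v_0\|_1$, valid because $\eta_1=\eta_2=0$ and $\pl_\nu u=\pl_\nu v=0$.

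\medskip

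\textbf{Bootstrap.} Set
$$M:=1+2\kk(\|u_0\|_{W^1_{2p}(\oo)}+\|v_0\|_{W^1_{2p}(\oo)}+\|w_0\|_{W^1_{2(p+1)}(\oo)}\rr),$$
and let $T_1\in(0,T_{\max}]$ be the largest time up to which
$$\|u(\cdot,t)\|_{W^1_{2p}(\oo)}+\|v(\cdot,t)\|_{W^1_{2p}(\oo)}+\|w(\cdot,t)\|_{W^1_{2(p+1)}(\oo)}\le M \ \ \mbox{for all } t\in[0,T_1]$$
holds; continuity gives $T_1>0$. I aim to show $T_1=T_{\max}=\yy$ once $\chi\le\chi_0$ and $\xi\le\xi_0$ are small enough. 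Arguing by contradiction, if $T_1<T_{\max}$ I would rerun on $[0,T_1]$ exactly the energy hierarchy driving the proof of Theorem \ref{t1.1}: test the $u$-equation by $u^{2p-1}$, then compute $\df{d}{dt}\ii|\nn w|^{2(p+1)}$ from the $w$-equation together with the $K_0$-bound, and finally derive the companion estimate for $\ii v^{2p}$ using the $\nn u$-control from the first step. In every such inequality the taxis cross-terms come weighted by $\chi$, $\chi^2$, $\xi$ or $\xi^2$, while the diffusive dissipation has $(\chi,\xi)$-independent coefficients. Under the bootstrap hypothesis the dangerous terms are bounded by polynomials in $M$ and $K_0$, and summing gives a master inequality of the schematic form
$$\df{d}{dt}E(t)+cE(t)\le C_0+\bbb(C_1\chi+C_2\chi^2+C_3\xi+C_4\xi^2\bbb)\Phi(M,K_0),$$
where $E(t)\asymp \|u\|_{W^1_{2p}(\oo)}^{2p}+\|v\|_{W^1_{2p}(\oo)}^{2p}+\|w\|_{W^1_{2(p+1)}(\oo)}^{2(p+1)}$ and the constants $c,C_i,\Phi$ are independent of $\chi,\xi$. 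Choosing $\chi_0,\xi_0$ so small that the bracketed perturbation is strictly smaller than the dissipation at the level $E\approx M$ would force $E$ strictly below its bootstrap ceiling on $[0,T_1]$, contradicting the definition of $T_1$. Hence $T_1=T_{\max}$, and the extensibility criterion combined with standard parabolic regularity promotes this to $T_{\max}=\yy$ and delivers \qq{1.6}.

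\medskip

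\textbf{Main obstacle.} The delicate point is the cyclic coupling: the $v$-equation is driven by $\nn u$, the $u$-equation by $\nn w$, and the $w$-equation reintroduces $u+v$. Smallness of $\chi$ and $\xi$ is precisely what decouples this loop quantitatively, since in every step of the hierarchy the taxis loss carries a factor $\chi$ or $\xi$ and is absorbed by the fixed diffusive dissipation once those prefactors are small. Unlike Theorem \ref{t1.1}, here $M$ is not small, so the bulk of the technical work is to track how the constants produced by the hierarchy depend on $M$ and $K_0$ and to verify that this dependence is polynomial, so that the prescription $\chi_0=\chi_0(M,K_0)$, $\xi_0=\xi_0(M,K_0)$ really yields a positive threshold.
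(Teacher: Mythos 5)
Your overall strategy (a bootstrap closed by smallness of $\chi,\xi$, breaking the loop $w\to u\to v\to w$) is the right spirit, but the specific closure has a genuine gap. You bootstrap the full top-order quantity $E\asymp\|u\|_{W^1_{2p}(\oo)}^{2p}+\|v\|_{W^1_{2p}(\oo)}^{2p}+\|w\|_{W^1_{2(p+1)}(\oo)}^{2(p+1)}$ below a ceiling $M$ fixed at twice the initial size, and assert a master inequality $E'+cE\le C_0+\big(C_1\chi+C_2\chi^2+C_3\xi+C_4\xi^2\big)\Phi(M,K_0)$ in which every $M$-dependent loss is weighted by a taxis coefficient. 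That is not what the hierarchy produces: the $w$-equation contains the consumption term $-\lm(u+v)w$, whose coefficient $\lm$ is fixed and carries no taxis prefactor, so its contribution to the $\|\nn w\|_{2(p+1)}$-inequality is of size $C(\lm,\mu)\,(\|u\|_\yy+\|v\|_\yy+1)^{2(p+1)}(K_0+r_*)^{2(p+1)}$, i.e.\ an $M$-dependent constant (through $\|u\|_\yy\le C_S M$ by Sobolev embedding) that is \emph{not} multiplied by $\chi$ or $\xi$ and in general exceeds the ceiling $\sim M^{2(p+1)}$ once $C_S$, $\lm$, $K_0$ are not small. Shrinking $\chi_0,\xi_0$ cannot repair this, so the contradiction step at $T_1$ fails: the $w$-component of $E$ can legitimately cross the ceiling while $u,v$ remain under control. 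A secondary problem: the quantity $\|\tr w\|_{p+1}$ driving the $\nn u$-estimate is controlled only in the time-averaged sense $\int_t^{t+\tau}\|\tr w\|_{p+1}^{p+1}\,{\rm d}s\le b$ (maximal Sobolev regularity, Lemma \ref{l3.2}), not pointwise in time, so the pointwise ODE form of your master inequality is not available; one needs a comparison statement of the type of Lemma \ref{l2.2}.

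The paper avoids both issues by bootstrapping only the pointwise bounds $\|u(\cdot,t)\|_\yy\le A:=2\|u_0\|_\yy$ and $\|v(\cdot,t)\|_\yy\le B:=2\|v_0\|_\yy$, and treating all gradient quantities hierarchically rather than as part of the ceiling: on $(0,T)$ one derives possibly large bounds $\|\nn w\|_{2(p+1)}\le CG_0$, $\int_t^{t+\tau}\ii|\tr w|^{p+1}\le CH_0^{p+1}$, and $\|\nn u\|_{2p}\le C\|u_0\|_{W^1_{2p}(\oo)}\big[(\chi G_0)^{2(p+1)}+(\chi H_0)^{p+1}+1\big]^{1/2p}$ (Lemmas \ref{l3.1}--\ref{l3.3}, the last via the energy functional $\ii|\nn u|^{2p}$ and Lemma \ref{l2.2}); no smallness is needed there, since these estimates merely feed constants depending on $A,B$. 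Smallness enters only at the two Duhamel improvements $\|u\|_\yy\le\|u_0\|_\yy+CA\chi G_0$ and $\|v\|_\yy\le\|v_0\|_\yy+C\xi B\|u_0\|_{W^1_{2p}(\oo)}\big[(\chi G_0)^{2(p+1)}+(\chi H_0)^{p+1}+1\big]^{1/2p}$ (Lemmas \ref{l3.4} and \ref{l3.5}), where the \emph{entire} gain beyond the initial datum carries an explicit factor $\chi$ resp.\ $\xi$; for $\chi\le\chi_0$, $\xi\le\xi_0$ this yields $\|u\|_\yy\le\frac32\|u_0\|_\yy<A$ and $\|v\|_\yy\le\frac32\|v_0\|_\yy<B$, closing the bootstrap, after which the $W^1_{2p}$ bounds and the extensibility criterion of Lemma \ref{l2.1} give $T_m=\yy$ (Lemma \ref{l3.6}). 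If you restructure your argument this way --- pointwise ceilings only on $u$ and $v$, with semigroup representations rather than a summed energy for the closing step --- your plan becomes the paper's proof.
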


It is noted that Theorems \ref{t1.1} and \ref{t1.1a} will be proved by the same arguments and both of them can be easily deduced from Lemma \ref{l3.6} below.

Next, we observe that, when $\eta_1>0,m\ge2$ and $\eta_2=0$, the solution exists globally and remain bounded in 2 dimension provided only $\xi$ is small (without any restriction on $\chi$).

\begin{theo}\label{t1.2}
Let $n=2$, $\eta_1>0,m\ge2$ and $\eta_2=0$. Then there exists $\xi_0>0$ such that, for $\xi\le\xi_0$, the problem \qq{1.1} admits a unique nonnegative global bounded classical solution $(u,v,w)\in [C(\bar\Omega\times [0,\yy))\cap C^{2,1}(\bar\Omega\times (0,\yy))]^3$ which satisfies, for some $C>0$,
\bes
\|u(\cdot,t)\|_{W^1_4(\oo)}+\|v(\cdot,t)\|_{W^1_4(\oo)}+\|w(\cdot,t)\|_{W^1_4(\oo)}\le C,\ \ \forall\ t\in(0,\yy).\label{1.7}
\ees
\end{theo}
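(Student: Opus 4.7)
The plan is to adapt the template of Lemma \ref{l3.6}, but shift the smallness from the initial data to the single coefficient $\xi$: the super-logistic absorption with $m\ge 2$ will do for $u$ all the work that smallness of $(u_0,w_0)$ did in Theorem \ref{t1.1}, while smallness of $\xi$ is forced upon us because the $v$-equation is conservative ($\eta_2=0$). By the standard local-existence theorem and extension principle, it suffices to derive uniform-in-time $W^{1,4}(\oo)$ bounds. Three starting estimates come for free: integrating the $u$-equation and using $-\eta_1 u^m$ with $m\ge 2$ together with H\"older gives $\ii u\le C$; integrating the $v$-equation (with $\eta_2=0$) gives $\ii v\equiv\ii v_0$; and since the $w$-equation is linear in $w$ with reaction coefficient $\lm(u+v)+\mu\ge\mu>0$ and nonnegative source $r$, the maximum principle yields $\|w\|_{L^\infty(\oo)}\le \max\{\|w_0\|_\infty,\mu^{-1}\|r\|_\infty\}$ uniformly in $t$, independently of $u$ and $v$.

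The second step is an $L^p$-bootstrap for $u$, which must succeed without any restriction on $\chi$. Testing the $u$-equation by $u^{p-1}$ and applying Young to the chemotactic cross-term produces a term of the form $C\chi^2\ii u^p|\nn w|^2$. Because $m\ge 2$, the absorption $\eta_1\ii u^{p+m-1}$ dominates $\ii u^{p+1}$, so via Young and H\"older together with $\|\nn w\|_{L^q}$-bounds (obtained by $L^q$-maximal regularity on the linear $w$-equation, fed by the current $L^p$-bound on $u+v$ and by $\|r\|_\infty$) it swallows both $\ii u^p|\nn w|^2$ and the $\eta_1\ii u^p$ term. A finite iteration $L^1\to L^2\to\cdots$ then yields $\|u(\cdot,t)\|_{L^p(\oo)}\le C(p)$ for every $p<\infty$, and a further smoothing step using the variation-of-constants representation of $u$ and standard Neumann heat-semigroup estimates gives a uniform bound on $\|\nn u(\cdot,t)\|_{L^4(\oo)}$.

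The main obstacle is the $L^2$-estimate for $v$, where no pointwise damping is available. Testing the $v$-equation by $v$, applying Young and the 2D Gagliardo--Nirenberg inequality $\|v\|_{L^4}^2\le C\|\nn v\|_{L^2}\|v\|_{L^2}+C\|v\|_{L^2}^2$, one expects a differential inequality of the form
\begin{equation*}
\frac{1}{2}\dv\ii v^2+\frac{1}{4}\ii|\nn v|^2\le C\bigl(\xi^2\|\nn u\|_{L^4}^2+\xi^4\|\nn u\|_{L^4}^4\bigr)\ii v^2.
\end{equation*}
The Poincar\'e--Wirtinger inequality, combined with the conserved mass $\ii v\equiv\ii v_0$, then supplies $\ii|\nn v|^2\ge C_P^{-1}\ii v^2-C$, so if $\xi\le\xi_0$ is so small that $C(\xi^2\|\nn u\|_{L^4}^2+\xi^4\|\nn u\|_{L^4}^4)\le(8C_P)^{-1}$, which is possible thanks to the uniform bound on $\|\nn u\|_{L^4}$ from the previous step, the bad right-hand side is absorbed and $\ii v^2$ satisfies $\dv\ii v^2+(4C_P)^{-1}\ii v^2\le C$, hence $\|v(\cdot,t)\|_{L^2}\le C$ uniformly. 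An analogous small-$\xi$ $L^p$-iteration followed by Neumann heat-semigroup smoothing will then upgrade this to $\|v(\cdot,t)\|_{L^\infty}\le C$ and deliver \eqref{1.7} for all three components, completing the proof via the extension criterion.
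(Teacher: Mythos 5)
Your proposal has a genuine circularity that the paper's proof is specifically structured to avoid. Your second step (the $L^p$-bootstrap for $u$) requires bounds on $\|\nn w\|_{L^q}$ with $q\ge 2$ (or on $\tr w$ via maximal regularity), and these must be extracted from the $w$-equation whose forcing is $f=-\lm(u+v)w-\mu w+r$ — so they require control of $v$ beyond the conserved mass. At that stage of your argument you only have $\ii v\equiv\ii v_0$, and in two dimensions $L^1$ data is not enough: the semigroup estimate $\|\nn e^{t\tr}f\|_{L^q}\le Ct^{-\frac12-\frac n2(1-\frac1q)}\|f\|_{L^1}$ has a non-integrable time singularity $t^{-1}$ already at $q=2$, and maximal regularity fails in $L^1$, so no usable bound on $\nn w$ or $\tr w$ is available. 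But your bounds on $v$ come only in the last step and require $K=\sup_t\|\nn u\|_{L^4}$ from the very bootstrap that is blocked; worse, your smallness threshold $\xi_0$ is chosen in terms of $K$, which itself depends (through $w$) on the $v$-bounds being proved — the argument is self-referential as written.

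The missing idea is the continuation device of Section 4: define $\tf:=\sup\{T\in(0,T_m):\|v(\cdot,t)\|_\yy\le B \mbox{ for all } t\in(0,T)\}$ with $B=2\|v_0\|_\yy$, so that an a priori $L^\yy$ bound on $v$ is \emph{granted} on $(0,\tf)$ and every constant in the subsequent chain — space-time $L^2$ bounds for $\tr w$ and $\nn u$ (Lemma \ref{l4.1}, using the logistic dissipation $m\ge2$ exactly as you do), uniform $L^4$ of $\nn w$ (Lemma \ref{l4.2}), $L^\yy$ of $u$ by semigroup interpolation of $\|u\|_{12}$ against the mass bound (Lemma \ref{l4.3}), and $L^4$ of $\nn u$ (Lemma \ref{l4.4}) — is independent of $\xi$. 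The loop is then closed by the \emph{linear-in-$\xi$} semigroup estimate $\|v(\cdot,t)\|_\yy\le\|v_0\|_\yy+C\xi$ (Lemma \ref{l4.5}) with $C$ independent of $\xi$: choosing $\xi_0=\|v_0\|_\yy/(2C)$ forces $\|v\|_\yy\le\frac32\|v_0\|_\yy<B$ on $(0,\tf)$, hence $\tf=T_m$ by continuity, and all estimates propagate to $(0,T_m)$. Your $L^2$-energy argument for $v$ with Gagliardo--Nirenberg, mass conservation and Poincar\'e--Wirtinger is a plausible alternative to the paper's Lemma \ref{l4.5} \emph{once embedded in such a framework}, but it cannot by itself repair the order-of-estimates problem: some a priori control of $v$ on a maximal time interval must precede the $u$- and $w$-estimates, and your proposal supplies none.
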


The last aim of this article is to show that the suitable large damping rates can prevent blow-up in two dimensional case.

\begin{theo}\label{t1.3}
Let $n=2$ and $\eta_1,\eta_2>0$. If
\bes
m,l\ge2\ \ {\rm and}\ \ l\ge\max\kk\{3,3\tilde m/(2\tilde m-3)\rr\}\ \ {\rm with}\ \tilde m=\min\{m,l\},\label{1.8}
\ees
then the problem \qq{1.1} admits a unique nonnegative global bounded classical solution $(u,v,w)\in [C(\bar\Omega\times [0,\yy))\cap C^{2,1}(\bar\Omega\times (0,\yy))]^3$. Moreover, \eqref{1.7} holds as well.
\end{theo}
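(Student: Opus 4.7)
The plan is to follow a classical bootstrap along the lines sketched for Theorems~\ref{t1.1}--\ref{t1.2}, but now using the super-linear dampings $-\eta_1 u^m$ and $-\eta_2 v^l$ (instead of smallness) to absorb the taxis cross-terms. The extensibility criterion associated with local existence reduces the problem to a priori bounds on $\|u\|_\infty+\|v\|_\infty+\|\nabla w\|_{L^s}$ for some $s>2$ on the maximal existence interval $[0,T_{\max})$. Two immediate estimates are in place: an $L^\infty$-bound for $w$ by comparison in the third equation (exploiting $u+v\ge 0$ and $r\in L^\infty$), and $L^1$-bounds on $u,v$ obtained by integrating the first two equations and using the super-linear damping terms.

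The heart of the argument is a coupled $L^p$-$L^q$ energy estimate for $u$ and $v$. Testing the $u$-equation against $u^{p-1}$ and using the identity $\chi(p-1)\int u^{p-1}\nabla u\cdot\nabla w=-\tfrac{\chi(p-1)}{p}\int u^p\Delta w$, followed by the substitution $\Delta w=w_t+\lm(u+v)w+\mu w-r$ taken from the $w$-equation, converts the chemotactic term into $\int u^{p+1}$, $\int u^p v$, $\int u^p$ and a manageable $\frac{d}{dt}\int u^p w$ piece (using $\|w\|_\infty\le C$). Analogously, testing the $v$-equation with $v^{q-1}$ and substituting $\Delta u=u_t+\chi\nabla\cdot(u\nabla w)-\eta_1(u-u^m)$ from the $u$-equation produces a dominant cross-term $\int v^q u^m$ together with further coupling terms. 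All such cross integrals are then split by Young's inequality into $\eta_1\int u^{p+m-1}$ and $\eta_2\int v^{q+l-1}$ (supplied by the damping), subject to admissibility constraints on the exponents $(p,q,m,l)$. The condition \eqref{1.8} is precisely what makes these constraints feasible with sufficiently large $p,q$: the lower bound $l\ge 3\tilde m/(2\tilde m-3)$ arises from the tightest Young balance between the $u^{p+m-1}$ and $v^{q+l-1}$ terms, while $l\ge 3$ ensures that the residual two-dimensional Sobolev embeddings used in the subsequent iteration step do not erase the gain.

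Once $u\in L^\infty_t L^p_x$ and $v\in L^\infty_t L^q_x$ are secured for suitably large $p,q$, the $w$-equation is a linear parabolic problem with source $r-\lm(u+v)w\in L^\infty_t L^s_x$ for arbitrary $s<\infty$; parabolic $L^p$-maximal regularity then yields $\nabla w\in L^\infty_t L^\infty_x$. A Moser--Alikakos iteration applied to the $u$-equation, now with bounded $\nabla w$, produces $u\in L^\infty$. The $v$-equation thereafter reduces to a linear drift-diffusion problem with drift $\nabla u$ bounded in $L^\infty_t L^s_x$ for any $s<\infty$ (via parabolic $L^p$-regularity for $u$), delivering $v\in L^\infty$. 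Parabolic $L^p$-theory finally yields the $W^{1,4}$-bound \eqref{1.7}, and the extensibility criterion forces $T_{\max}=\infty$. The main obstacle throughout is the coupled $L^p$-$L^q$ step: the two taxis terms are driven by mutually intertwined gradients, and the super-linear dampings are the only available mechanism for absorbing the mixed integrals, which is precisely what dictates the delicate balance between $m$ and $l$ encoded in \eqref{1.8}.
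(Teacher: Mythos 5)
Your central step --- the coupled $L^p$--$L^q$ estimate in which $\tr w$ is replaced by $w_t+\lm(u+v)w+\mu w-r$ and $\tr u$ by $u_t+\chi\nn\cdot(u\nn w)-\eta_1(u-u^m)$ --- does not close as described, and the way you claim \eqref{1.8} emerges from it is not a derivation. First, the signs are against your narrative: in the $v$-estimate the taxis term equals $-\frac{\xi(q-1)}{q}\int_\oo v^q\tr u$, so the substituted damping contributes $-\frac{\xi(q-1)\eta_1}{q}\int_\oo v^qu^m\le0$, i.e.\ your ``dominant cross-term $\int v^qu^m$'' is harmless and there is no Young balance between $u^{p+m-1}$ and $v^{q+l-1}$ to be struck; likewise in the $u$-estimate the terms $\int_\oo u^{p+1}w$ and $\int_\oo u^pvw$ appear with a favorable sign. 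What the substitutions actually leave behind are gradient-bearing integrals: handling $\int_\oo v^qu_t$ via $\frac{d}{dt}\int_\oo v^qu$ and re-substituting $v_t$ produces $\xi\int_\oo v^q|\nn u|^2$; the reintroduced term $\chi\int_\oo v^q\nn\cdot(u\nn w)$ produces $\int_\oo v^{q-1}u\,\nn v\cdot\nn w$, hence after Young $\int_\oo v^qu^2|\nn w|^2$; and the analogous manipulation in the $u$-estimate leaves $\chi^2(p-1)\int_\oo u^p|\nn w|^2$. None of these is a pure power of $u$ or $v$, so the superlinear dampings cannot absorb them by Young's inequality, and at that stage of your scheme no bound on $\nn w$ (nor any $v^q$-weighted bound on $\nn u$) is yet available --- this is precisely the obstruction that, in the cited work of Black on this very system, limited such coupled-testing strategies to \emph{generalized} solutions. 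Note also that $l\ge3$ plays no identifiable role anywhere in your computation, which is a symptom that the hypothesis is being invoked rather than used.

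The paper closes the loop in a different order and the condition \eqref{1.8} enters at one specific, checkable point. From the dampings one gets $\sup_t\int_\oo u+\int_\oo v\le C$ and space-time bounds $\int_t^{t+\delta}\!\int_\oo u^m,\ \int_t^{t+\delta}\!\int_\oo v^l\le C$; maximal Sobolev regularity for the $w$-equation then gives $\int_t^{t+\delta}\!\int_\oo|\tr w|^{\tilde m}\le C$, and an $\int_\oo u^2$ energy estimate gives $\int_t^{t+\delta}\!\int_\oo|\nn u|^2\le C$. The key step is the functional $\int_\oo|\nn w|^4$: there the term $\lm\int_\oo vw|\nn w|^2|\tr w|$ is split by Young into $C\int_\oo v^{3\tilde m/(2\tilde m-3)}+C\int_\oo|\tr w|^{\tilde m}$, and $\lm\int_\oo v|\nn w|^4$ into $C\int_\oo v^3$ plus an absorbable multiple of $\int_\oo|\nn w|^6$ (controlled pointwise by $\int_\oo|\nn w|^2|D^2w|^2$); the hypotheses $l\ge3$ and $l\ge3\tilde m/(2\tilde m-3)$ are exactly what makes both $v$-powers dominated by the available $\int v^l$ bound, and $m,l\ge2$ supplies $\tilde m\ge2$ for the regularity estimates. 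With $\sup_t\|\nn w\|_4$ in hand, $\|u\|_\yy$ follows from a Neumann-semigroup estimate using only $\|u\|_1$ (via $\|u\nn w\|_3\le\|u\|_{12}\|\nn w\|_4\le CH(T)^{11/12}$ and absorbing $H(T)$), then $\int_\oo|\nn u|^4$, then $\|v\|_\yy$, then $\int_\oo|\nn v|^4$, and the extensibility criterion yields $T_m=\yy$; no coupled $L^p$--$L^q$ estimate and no Moser iteration are needed. To salvage your outline you would have to insert a conditional $\nn w$ estimate of this kind \emph{before} any $L^p$ bootstrap of $u$ and $v$; as written, the proposal has a genuine gap at its heart.
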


\begin{remark}\label{r1.1}
The condition \eqref{1.8} is equivalent to
\bes
2\le m<3,\ l\ge 3m/(2m-3)\ \ {\rm or}\ \ m,l\ge3.\label{1.9}
\ees
We recall from the pioneer work {\rm\cite{OsakiTY}} and recent researches {\rm\cite{Xiang-JMP,Zheng-Xiang Nonlinearity}} that the logistic growth or some sub-logistic source is enough to rule out any blow-up in the Keller-Segel systems in two dimensional case. It would be rather meaningful to investigate wether or not the condition \eqref{1.9} or \eqref{1.8} is the optimal one for ensuring the classical global solvability of \eqref{1.1} in two dimension. We leave this challenging problem as the future work.
\end{remark}

\begin{remark}\label{r1.2}
In the present paper, we use the special form of the generalized logistic source $\eta_1(u-u^m)$ and $\eta_2(v-v^l)$ for the simplicity. It seems worthwhile to mention that, for the generic constant parameters, i.e., $\eta_1(a_1u-b_1u^m)$ and $\eta_2(a_2v-b_2v^l)$ with $a_i,b_i>0$ for $i=1,2$, the conclusions in Theorem \ref{t1.2} and Theorem \ref{t1.3} hold as well.
\end{remark}

The article is organized as follows. Section 2 provides some basic preliminaries, including the local solvability of \eqref{1.1} and a fundamental ODE result . Section 3 is devoted to prove that the small values of initial data or taxis coefficients can prevent blow-up in any dimensions when $\eta_1=\eta_2=0$ (Theorem \ref{t1.1} and Theorem \ref{t1.1a}). In section 4, we show that, in the case $\eta_1>0, m\ge2$ and $\eta_2=0$, the small value of $\xi$ is enough to ensure the global existence of the solution to \eqref{1.1} (Theorem \ref{t1.2}). We finally prove in Section 5 that, when $\eta_1,\eta_2>0$ and logistic degradation rates $m,l$ are sufficiently large, then \eqref{1.1} is globally solvable (Theorem \ref{t1.3}).

\section{Local existence and preliminaries}
  \setcounter{equation}{0} {\setlength\arraycolsep{2pt}

The local solvability of \eqref{1.1} are well established by the Amann theory (cf. \cite[Lemma 2.1]{TaoW-2019-forager} or \cite{WjpW-JMP}). The positivity of solution and the uniform-in-time boundedness of $w$ are due to the parabolic maximum principle.

\begin{lem}\label{l2.1}\, Suppose that $n\ge1$, $\eta_i\ge0$ with $i=1,2$ and $m,l>1$. Then there exist a $T_m\in(0,\yy]$ and a unique solution $(u,v,w)$ which solves \qq{1.1} in $[0,T_m)$, $u,v,w\in C(\bar\Omega\times [0,T_m))\cap C^{2,1}(\bar\Omega\times (0,T_m))$ and satisfies
 \bes
 u,\,v>0, \ \ \ 0< w\le Q\ \ \ {\rm in} \ \ \bar\Omega\times (0,T_m).\label{2.1}
 \ees
Moreover, the ``existence time $T_m$" can be chosen maximal: either
$T_m=\infty$, or $T_m<\infty$ and
  \[\limsup_{t\to T_m}\big(\|u(\cdot,t)\|_{W^1_q(\oo)}+\|v(\cdot,t)\|_{W^1_q(\oo)}
  +\|w(\cdot,t)\|_{W^1_q(\oo)}\big)=\yy\ \ \ {\rm for\ all}\ q>n.\]
\end{lem}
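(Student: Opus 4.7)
The plan is to verify the hypotheses of Amann's theorem for quasilinear parabolic systems in order to produce a maximal local classical solution, and then to use comparison arguments to extract the pointwise bounds \eqref{2.1}. I would first rewrite \qq{1.1} in the abstract divergence form $\partial_t \mathbf{U} = \nabla\cdot(A(\mathbf{U})\nabla\mathbf{U}) + F(\mathbf{U})$ with $\mathbf{U}=(u,v,w)^T$ and
\[
A(\mathbf{U})=\begin{pmatrix} 1 & 0 & -\chi u \\ -\xi v & 1 & 0 \\ 0 & 0 & 1 \end{pmatrix},\qquad F(\mathbf{U})=\bigl(\eta_1(u-u^m),\ \eta_2(v-v^l),\ -\lm(u+v)w-\mu w+r\bigr)^T.
\]
A direct expansion of $\det(A(\mathbf U)-\zeta I)$ yields $(1-\zeta)^3$, so $A$ has the single eigenvalue $1$ with positive real part, which gives normal ellipticity uniformly on bounded sets of $\mathbf U$. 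Since $A(\cdot),F(\cdot)$ are smooth in their arguments and the initial data satisfy $u_0,w_0\in W^{2,\infty}(\oo)$ and $v_0\in W^{1,\infty}(\oo)$ with homogeneous Neumann boundary conditions, Amann's theorem (as applied in e.g. \cite[Lemma 2.1]{TaoW-2019-forager}, \cite{WjpW-JMP}) furnishes a maximal $\ty\in(0,\yy]$ and a unique classical solution with the claimed regularity, together with the extension criterion stated in the lemma.

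Next I would establish positivity and the upper bound on $w$ by scalar maximum-principle arguments applied on $[0,T]$ for any $T<\ty$, which is legitimate since the solution is already classical there. Rewriting the third equation as $w_t-\tr w+(\lm(u+v)+\mu)w=r\ge 0$, and noting $w_0\ge 0,\not\equiv 0$, the parabolic comparison principle yields $w\ge 0$; the strong maximum principle together with $w_0\not\equiv 0$ upgrades this to $w>0$ in $\bar\oo\times(0,\ty)$. For $u$, the first equation reads $u_t-\tr u+\chi\nn w\cdot\nn u+\chi(\tr w)u=\eta_1(u-u^m)$, whose zeroth-order coefficient is bounded on $\bar\oo\times[0,T]$ and whose reaction term vanishes at $u=0$; thus $u\ge 0$ by comparison, and $u>0$ by the strong maximum principle. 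The argument for $v$ is identical after using $\nn u$ in place of $\nn w$.

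For the explicit upper bound $Q$ on $w$, I would drop the nonpositive taxis-free absorption $-\lm(u+v)w$, which is legitimate because $u,v,w\ge 0$, obtaining $w_t-\tr w+\mu w\le \|r\|_{L^\yy(\oo\ti(0,\yy))}$. Comparison with the spatially constant ODE solution then gives
\[
\|w(\cdot,t)\|_\yy\le \max\kk\{\|w_0\|_\yy,\ \mu^{-1}\|r\|_{L^\yy(\oo\ti(0,\yy))}\rr\}\defequal Q\qquad \text{for all}\ t\in(0,\ty),
\]
using here the global-in-time hypothesis \eqref{x1} on $r$ so that $Q$ is independent of $T$.

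The only step requiring any real care is the normal-ellipticity check, since the coupling matrix $A(\mathbf U)$ is neither symmetric nor triangular; fortunately the cross terms sit on strictly off-diagonal positions that produce no contribution to the characteristic polynomial, and the verification collapses to a one-line determinant computation. Once Amann's framework is in place the remainder is standard: positivity follows from scalar comparison applied equation by equation in the order $w\to u\to v$ (any order works since each equation sees the others only through already-continuous coefficients), and the blow-up alternative in $W^1_q$, $q>n$, is precisely the one produced by Amann's theorem in this functional setting.
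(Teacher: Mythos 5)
Your proposal is correct and takes essentially the same route as the paper: the paper's proof consists of exactly the two ingredients you supply, namely an appeal to Amann's theory (via the same references, \cite[Lemma 2.1]{TaoW-2019-forager} and \cite{WjpW-JMP}) for local existence, uniqueness and the $W^1_q$ extension criterion, and the parabolic maximum principle for positivity and the bound $w\le Q$. Your write-up simply makes explicit the normal-ellipticity determinant check and the equation-by-equation comparison arguments that the paper delegates to the cited works.
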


The following lemma gives a statement on ODE comparison.
\begin{lem}\label{l2.2}
Let $a,b>0$. Assume that for some $\hat T\in(0,\yy]$ and $\hat\tau=\min\{1,\hat T/2\}$, the nonnegative functions $y\in C([0,\hat T))\cap C^1((0,\hat T))$, $f\in L_{loc}^1([0,\hat T))$ and satisfy
 \bes
 &y'(t)+ay(t)\le f(t),\ \ \ t\in(0,\hat T),&\label{2.2}\\[1mm]
 &\dd\int_t^{t+\hat\tau} f(s){\rm d}s\le b,\ \ \ t\in(0,\hat T-\hat\tau).&\nm\ees
Then
\bes
y(t)\le y(0)+2b+{b}/{a},\ \ \ t\in(0,\hat T).\label{2.4}
\ees
\end{lem}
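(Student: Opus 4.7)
The plan is to apply the integrating factor $e^{at}$ to $y'+ay\le f$ and integrate from $0$ to $t$, producing the variation-of-parameters bound
\[
y(t)\le e^{-at}y(0)+\int_0^t e^{-a(t-s)}f(s)\ds,
\]
so the task reduces to estimating the convolution on the right by $2b+b/a$. I would split into the two regimes $t\le 2\hat\tau$ and $t>2\hat\tau$ relative to the threshold $\hat\tau$.

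For $t\in(0,2\hat\tau]$, I simply drop the exponential factor and use the covering $[0,t]\subset[0,\hat\tau]\cup[\hat\tau,2\hat\tau]$; each piece has length $\hat\tau$, so the hypothesis (applied at $\sigma=0$ and $\sigma=\hat\tau$, both reached by continuity of $\sigma\mapsto\int_\sigma^{\sigma+\hat\tau}f$ and legitimate since $2\hat\tau\le\hat T$ by the definition of $\hat\tau$) yields $\int_0^t f\le 2b$ and hence $y(t)\le y(0)+2b$. For $t>2\hat\tau$, the fact $t<\hat T$ forces $\hat T>2\hat\tau$, which in view of $\hat\tau=\min\{1,\hat T/2\}$ pins down $\hat\tau=1$. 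This is the pivot of the argument: it allows me to partition $[0,t]$ into unit intervals $I_j=[t-(j+1),t-j]$ for $j=0,\ldots,N-1$ with $N=\lfloor t\rfloor$, plus a leftover piece $I_N=[0,t-N]$ of length $<1$. On each $I_j$ we have $e^{-a(t-s)}\le e^{-aj}$ and $\int_{I_j}f\le b$, so summing gives
\[
\int_0^t e^{-a(t-s)}f(s)\ds\le b\sum_{j=0}^{N}e^{-aj}\le\frac{b}{1-e^{-a}},
\]
and the elementary inequality $e^a-1\ge a$ for $a>0$ rewrites this as $\frac{b}{1-e^{-a}}=b+\frac{b}{e^a-1}\le b+\frac{b}{a}$, producing $y(t)\le y(0)+b+b/a$ in this regime.

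Both regimes are dominated by $y(0)+2b+b/a$, which is the asserted bound \eqref{2.4}. The only real subtlety, and the sole reason for the particular threshold $\hat\tau=\min\{1,\hat T/2\}$ in the hypothesis, is that the geometric-series argument in the long-time case leans crucially on $\hat\tau=1$; this is automatic once $t>2\hat\tau$, while for short time horizons (those with $\hat T\le 2$) one never enters that regime, so the crude direct estimate of the short-time case already captures the contribution of $\int f$. Everything else reduces to routine ODE comparison manipulations.
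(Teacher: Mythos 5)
Your proof is correct, and it follows the same overall strategy as the paper's --- a dichotomy between a short-time regime handled by direct integration and a long-time regime in which $\hat\tau$ is necessarily equal to $1$ --- but it is self-contained at the decisive point. For the case $\hat T>2$ the paper simply invokes Lemma 3.4 of Stinner--Surulescu--Winkler, whereas your variation-of-constants formula combined with the unit-interval partition and the geometric series $b\sum_{j}e^{-aj}\le b/(1-e^{-a})\le b+b/a$ amounts to an explicit proof of that cited lemma; moreover, your observation that $t>2\hat\tau$ forces $\hat\tau=1$ makes your pointwise-in-$t$ split equivalent to the paper's split on $\hat T$ (namely $\hat T\le2$ versus $\hat T>2$). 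In the short regime the paper drops the dissipative term $ay$ outright (using $y\ge0$) and integrates \eqref{2.2} over two successive intervals of length $\hat\tau$, which is the same computation as your covering $[0,t]\subset[0,\hat\tau]\cup[\hat\tau,2\hat\tau]$ after discarding the exponential factors. Two small merits of your write-up: you explicitly address the endpoint $\sigma=0$ of the open interval on which the integral hypothesis is stated (and, via $f\ge0$, the endpoints near $\hat T-\hat\tau$), a point the paper passes over silently; and your long-time bound $y(0)+b+b/a$ is slightly sharper than required, so both regimes sit comfortably below the asserted $y(0)+2b+b/a$ in \eqref{2.4}.
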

\begin{proof}
If $\hat T>2$, then $\hat\tau=1$. Thanks to \cite[Lemma 3.4]{Stinner-S-Winkler}, we get \eqref{2.4}.

If $\hat T\le2$, then $\hat T=2\hat\tau$. For any $t_0\in[0,\hat T)$ and $t\in[t_0,t_0+\hat\tau]\cap[0,\hat T)$, integrating \eqref{2.2} from $t_0$ to $t$, there holds
\bes
y(t)\le y(t_0)+\int_{t_0}^tf(s)\ds\le y(t_0)+b.\label{2.5}
\ees
Hence, $y(t)\le y(0)+b$ for any $t\in[0,\hat\tau]$. Again by \eqref{2.5} and $\hat T=2\hat\tau$, we have
\[y(t)\le y(\hat\tau)+b\le y(0)+2b,\ \ t\in(\hat\tau,\hat T).\]
Therefore, for any $t\in(0,\hat T)$, there holds that $y(t)\le y(0)+2b$. This completes the proof.
\end{proof}

\section{Proofs of Theorem \ref{t1.1} and Theorem \ref{t1.1a}}
\setcounter{equation}{0} {\setlength\arraycolsep{2pt}
First of all, we introduce some notations. Let $r_*=\|r(x,t)\|_{L^\yy(\oo\times(0,\yy))}$ and
\bess
 &A=2\|u_0\|_\yy,\ \ B=2\|v_0\|_\yy,\ \ Q=\max\{\|w_0\|_\yy,r_*/\mu\},&\\[1mm]
&p=\min\kk\{k\in\mathbb{N}:\ k>{n}/{2}\rr\},\ \ \ \ma=\kk\{\eta,\lm,\mu,n,p,\oo\rr\},&\\[1mm]
&G_0=(A+B+1)(\|w_0\|_{W^1_{2(p+1)}(\oo)}+r_*),\ \ H_0=(A+B+1)Q+\|w_0\|_{W^2_{p+1}(\oo)}+r_*.&
\eess
Moreover, we define
\bess
T:=\sup\kk\{\tilde t\in(0,T_m):\,\|u(\cdot,t)\|_\yy\le A,\,\|v(\cdot,t)\|_\yy\le B \ \ {\rm in}\ (0,\tilde t)\rr\}.
\eess
Clearly, by the continuity of the solution, we have $T\in(0,T_m]$ and
\bes \sup_{t\in(0,T)}\|u(\cdot,t)\|_\yy\le A,\ \ \sup_{t\in(0,T)}\|v(\cdot,t)\|_\yy\le B.\label{3.1}\ees
We set from now on that $\tau:=\min\{1,T/2\}$. The following lemma provides the $L^{2(p+1)}$-bound of $\nn w$ in $(0,T)$.
\begin{lem}\label{l3.1}
Let $n\ge2$, $\eta_1=\eta_2=0$. Then there exists $C=C(\ma)>0$ such that
\bes
\ii|\nn w(\cdot,t)|^{2(p+1)}\le CG_0^{2(p+1)},\ \ t\in(0,T).\label{3.1a}
\ees
\end{lem}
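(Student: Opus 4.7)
The plan is to represent $w$ via the Neumann heat semigroup and then estimate its gradient directly in $L^{2(p+1)}(\oo)$. The third equation of \qq{1.1} can be written as $w_t=(\tr-\mu)w+F$ with $F:=r-\lm(u+v)w$, subject to $\pl_\nu w=0$ on $\pl\oo$; the variation-of-constants formula then gives
\[w(\cd,t)=e^{-\mu t}e^{t\tr}w_0+\int_0^te^{-\mu(t-s)}e^{(t-s)\tr}F(\cd,s)\,\ds.\]
The main ingredients are the standard Neumann-semigroup smoothing estimates
\[\|\nn e^{t\tr}w_0\|_{L^{2(p+1)}(\oo)}\le C\|w_0\|_{W^1_{2(p+1)}(\oo)},\quad \|\nn e^{t\tr}f\|_{L^{2(p+1)}(\oo)}\le C\bbb(1+t^{-1/2}\bbb)\|f\|_{L^{2(p+1)}(\oo)}.\]

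First, I would bound $F$ in $L^{2(p+1)}(\oo)$ uniformly in $s\in(0,T)$. By the definition of $T$ and Lemma~\ref{l2.1}, $\|u(\cd,s)\|_\yy\le A$, $\|v(\cd,s)\|_\yy\le B$ and $\|w(\cd,s)\|_\yy\le Q$ on $(0,T)$, so
\[\|F(\cd,s)\|_{L^{2(p+1)}(\oo)}\le|\oo|^{1/(2(p+1))}\bbb[\lm(A+B)Q+r_*\bbb]\le C(\ma)(A+B+1)(Q+r_*).\]
The inequality $Q\le\|w_0\|_\yy+r_*/\mu$, combined with the Sobolev embedding $W^1_{2(p+1)}(\oo)\hk L^\yy(\oo)$ (which holds since $p>n/2$ implies $2(p+1)>n$), yields $Q\le C\|w_0\|_{W^1_{2(p+1)}(\oo)}+r_*/\mu$. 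Thus $(A+B+1)(Q+r_*)\le CG_0$, and hence $\|F(\cd,s)\|_{L^{2(p+1)}(\oo)}\le CG_0$ on $(0,T)$.

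Second, apply $\nn$ to the Duhamel formula and take the $L^{2(p+1)}$ norm:
\[\|\nn w(\cd,t)\|_{L^{2(p+1)}(\oo)}\le Ce^{-\mu t}\|w_0\|_{W^1_{2(p+1)}(\oo)}+C\int_0^te^{-\mu(t-s)}\bbb(1+(t-s)^{-1/2}\bbb)\|F(\cd,s)\|_{L^{2(p+1)}(\oo)}\,\ds.\]
The first term is controlled by $CG_0$. For the convolution integral, the kernel $\sg\mapsto e^{-\mu\sg}(1+\sg^{-1/2})$ is integrable on $(0,\yy)$ with an integral depending only on $\mu$, so the integral is bounded by $C\sup_{s\in(0,T)}\|F(\cd,s)\|_{L^{2(p+1)}(\oo)}\le CG_0$. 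Raising to the $(2p+2)$-th power yields \qq{3.1a}. The only delicate point is handling the $(t-s)^{-1/2}$ singularity in the semigroup-gradient estimate at $s=t$, but its local integrability together with the $\mu$-exponential decay secures a uniform-in-$t$ bound, making this step routine rather than obstructive.
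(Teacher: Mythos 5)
Your proposal is correct and follows essentially the same route as the paper: a variation-of-constants representation for $w$ combined with the $L^p$--$L^q$ gradient smoothing estimates for the Neumann heat semigroup, a uniform $L^{2(p+1)}$ bound on the forcing term via $\|u\|_\yy\le A$, $\|v\|_\yy\le B$, $0<w\le Q$, and the embedding $W^1_{2(p+1)}(\oo)\hk L^\yy(\oo)$ to absorb $Q$ into $G_0$. The only cosmetic difference is that you place $-\mu w$ in the linear part to obtain the decay factor $e^{-\mu(t-s)}$, whereas the paper keeps it in the inhomogeneity $f=-\lm(u+v)w-\mu w+r$ and relies on the intrinsic decay $e^{-\lm_1(t-s)}$ in the semigroup gradient estimate; both yield the same integrable kernel and the same conclusion.
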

\begin{proof}
According to the standard $L^p$-$L^q$ estimates for $(e^{t\tr})_{t\ge0}$ (\cite[Lemma 1.3 (ii)-(iii)]{M.W.2010}), we can find $\lm_1,\,C_1,\,C_2>0$ depending on $\ma$ such that, for all $t\in(0,T)$,
\bes
\|\nn w(\cdot,t)\|_{2(p+1)}&\le& C_1\|w_0\|_{W^1_{2(p+1)}(\oo)}+C_1\int_0^t\|\nn e^{(t-s)\tr}f(\cdot,s)\|_{2(p+1)}\ds\nm\\[0.5mm]
&\le&C_1\|w_0\|_{W^1_{2(p+1)}(\oo)}+C_2\int_0^t\kk(1+(t-s)^{-\frac12}\rr)
e^{-\lm_1(t-s)}\|f(\cdot,s)\|_{2(p+1)}\ds,\qquad\label{3.2a}
\ees
where $f=-\lm(u+v)w-\mu w+r$. It is easy to see from \eqref{3.1} and \eqref{2.1} that, there is $C_3=C_3(\ma)>0$ such that
\bess
\|f(\cdot,s)\|_{2(p+1)}\le C_3[(A+B+1)Q+r_*],\ \ \forall \ s\in(0,T).
\eess
Inserting this into \eqref{3.2a} and using the Sobolev embedding theorem: $W^1_{2(p+1)}(\oo)\hookrightarrow L^\yy(\oo)$ with $p+1>n/2+1$, one can find $C_4,C_5>0$ depending on $\ma$ such that
\bess
\|\nn w(\cdot,t)\|_{2(p+1)}\le C_4\big[(A+B+1)Q+\|w_0\|_{W^1_{2(p+1)}(\oo)}+r_*\big]
\le C_5G_0,\ \ t\in(0,T).
\eess
The proof is end.
\end{proof}

We proceed to find a space-time $L^{p+1}$ bound for $\tr w$.
\begin{lem}\label{l3.2}
Suppose that $n\ge2$ and $\eta_1=\eta_2=0$. Then there exists $C=C(\ma)>0$ such that
\bes
\int_t^{t+\tau}\ii|\tr w|^{p+1}\dx\ds\le CH_0^{p+1},\ \ t\in(0,T-\tau). \label{3.2}
\ees
\end{lem}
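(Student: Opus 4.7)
I would rewrite the $w$-equation as $w_s-\tr w+\mu w=h$ with $h:=-\lm(u+v)w+r$, and observe that the pointwise bounds \qq{3.1} and \qq{2.1} deliver the uniform-in-time estimate $\|h(\cdot,s)\|_{p+1}\le CH_0$ for all $s\in(0,T)$. The principal tool is maximal $L^{p+1}$ Sobolev regularity for $\pl_s-\tr+\mu$ under homogeneous Neumann boundary conditions: on any time interval $(a,b)$ with $b-a\le 2$, there exists a constant $C=C(\ma)$ such that every solution $v$ of $v_s-\tr v+\mu v=g$ in $\oo\times(a,b)$ with $\pl_\nu v|_{\pl\oo}=0$ and $v(\cdot,a)=v_a$ satisfies
\[
\int_a^b\|\tr v(\cdot,s)\|_{p+1}^{p+1}\ds \le C\int_a^b\|g(\cdot,s)\|_{p+1}^{p+1}\ds + C\|v_a\|_{W^2_{p+1}(\oo)}^{p+1}.
\]

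The main obstacle is that a direct application of this estimate on $(0,T)$ produces a source contribution growing linearly in $T$, whereas the lemma demands a $T$-uniform bound on a window of length $\tau\le 1$. My remedy is a time-localisation: for each fixed $t\in(0,T-\tau)$, I would work on $(t_\star,t+\tau)$ with $t_\star:=\max\{t-1,0\}$, so that the interval length is at most $2$. When $t\le 1$, no cutoff is necessary: I would apply the estimate directly to $v=w$, $v_a=w_0$, noting that $\|w_0\|_{W^2_{p+1}}\le H_0$. When $t>1$, I would introduce a smooth temporal cutoff $\eta\in C^1([t-1,t+\tau])$ with $\eta(t-1)=0$, $\eta\equiv 1$ on $[t,t+\tau]$ and $|\eta'|\le 2$, and apply the estimate to $\tilde w:=\eta w$, which inherits the Neumann condition and solves
\[
\tilde w_s-\tr\tilde w+\mu\tilde w=\eta' w+\eta h \quad\text{in }\oo\times(t-1,t+\tau),\qquad \tilde w(\cdot,t-1)=0.
\]

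In either case the source integrand is dominated by $C(\|w\|_\yy+\|h\|_{p+1})\le CH_0$, so its integral over the length-$\le 2$ interval is $\le CH_0^{p+1}$, and the initial-data contribution is either zero (cutoff case) or $\le CH_0^{p+1}$. Since $\eta\equiv 1$ on $[t,t+\tau]$, we have $\tr\tilde w=\tr w$ there, and discarding the rest of the time integral yields $\int_t^{t+\tau}\|\tr w\|_{p+1}^{p+1}\ds\le CH_0^{p+1}$, as required. The exponential stability of the shifted generator $\tr-\mu$ built into the $-\mu w$ term is what makes maximal regularity applicable with a $T$-independent constant on fixed-length intervals.
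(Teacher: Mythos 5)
Your proposal is correct, rests on the same key tool as the paper (the Giga--Sohr maximal Sobolev regularity for the Neumann heat semigroup, localized in time to intervals of length at most $2$), but your decomposition is genuinely simpler. The paper handles interior times $t_0\in(1,T-1)$ in two stages: it writes $w=\vp+z$ on $(\sigma,T)$ with $\sigma=t_0-1$, where $\vp$ is the caloric extension of $w(\cdot,\sigma)$ and $z$ solves $z_t=\tr z+f$ with $z(\cdot,\sigma)=0$; the initial-data difficulty for $\vp$ is then removed by applying maximal regularity to the cutoff function $\rho_\sigma\vp$, whose source $\rho_\sigma'\vp$ is controlled by $Q$. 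Your single cutoff $\tilde w=\eta w$ fuses these two steps: $\tilde w$ has zero initial data and source $\eta'w+\eta h$, and $\|\eta'w\|_{p+1}\le C\|w\|_\yy\le CQ\le CH_0$ is exactly the quantity that controls $\rho_\sigma'\vp$ in the paper, so nothing is lost and the auxiliary problems for $\vp$, $\phi$ and $z$ are all avoided; your case $t\le1$ (direct application with initial datum $w_0$, using $\|w_0\|_{W^2_{p+1}(\oo)}\le H_0$) coincides with the paper's estimate \eqref{3.3}, which also covers its Case II ($T\le2$). Two minor remarks. First, your closing sentence misattributes the $T$-uniformity of the constant: it is not the exponential stability of $\tr-\mu$ that matters here but the fact that maximal regularity is applied with vanishing initial data on intervals of fixed length $\le2$, where the constant is automatically translation-invariant; indeed the paper applies Giga--Sohr to $z_t=\tr z+f$ with no zeroth-order shift at all, and your variant with the $+\mu$ shift works equally well (after estimating $\|\tr\tilde w\|_{p+1}\le\|(\mu-\tr)\tilde w\|_{p+1}+\mu\|\tilde w\|_{p+1}$, the extra term again being bounded by $CQ$). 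Second, your application with nonzero initial datum $w_0$ implicitly uses that $w_0\in W^2_{p+1}(\oo)$ lies in the appropriate trace class for maximal regularity; the paper makes the identical implicit assumption in \eqref{3.3}, so this is a shared, not a new, gloss.
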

\begin{proof}
This proof is based on \cite[Lemma 4.2]{Lou-Winkler} and \cite[Lemma 4.3]{Blcak-Arxiv2019}. In view of the maximal Sobolev regularity properties of the Neumann heat semigroup $(e^{t\tr})_{t\ge0}$ (\cite{Giga-Sohr}) and \eqref{3.1}, we have
\bes
\int_0^{2\tau}\|w(\cdot,s)\|_{W^2_{p+1}(\oo)}^{p+1}\ds&\le&C_1\|w_0\|_{W^2_{p+1}(\oo)}^{p+1}
+C_1\int_0^{2\tau}\|f(\cdot,s)\|_{p+1}^{p+1}\ds\le C_2H_0^{p+1},\label{3.3}
\ees
where $f=-\lm(u+v)w-\mu w+r$ and $C_i=C_i(\ma)>0$ with $i=1,2$.

{\bf Case I: $T>2$}. In this case, it is easy to see from the definition of $\tau$ that $\tau=1$. Thus, \eqref{3.2} is equivalent to
\bes
\int_t^{t+1}\ii|\tr w|^{p+1}\dx\ds\le CH_0^{p+1},\ \ t\in(0,T-1). \label{3.3a}
\ees
From \eqref{3.3} with $\tau=1$, we know that \eqref{3.3a} holds for $t\in(0,1]$.

For $t_0\in(1,T-1)$, setting $\sigma:=t_0-1$ and hence $\sigma\in(0,T-2)$. Let $\rho\in C^\yy(\R)$ be an increasing function satisfying
\[0\le\rho\le1\ \ {\rm in}\ \R,\ \ \rho\equiv0\ \ {\rm in}\ (-\yy,0],\ \ \rho\equiv1\ \ {\rm in}\ (1,\yy),\]
and define $\rho_\sigma(t)=\rho(t-\sigma)$. Clearly, $\|\rho'\|_{C^1(\R)}\le C_3$ for some $C_3>0$.

Let $\vp$ be the unique classical solution of
\bess
 \left\{\begin{array}{lll}
 \vp_t=\tr\vp,&x\in\oo,\ \ t>\sigma,\\[1mm]
\pl_\nu\vp=0,\ \ &x\in\pl\oo, \ \ t>\sigma,\\[1mm]
 \vp(x,\sigma)=w(x,\sigma), \ \ &x\in\oo.
 \end{array}\right.
 \eess
It is easy to deduce that $\|\vp(\cdot,t)\|_\yy\le Q$ for $t\in(\sigma,\yy)$, and $\phi:=\rho_\sigma(t)\vp$ solves
\bess
 \left\{\begin{array}{lll}
\phi_t=\tr\phi+\rho_\sigma'(t)\vp, \ \ &x\in\oo,\ \ t>\sigma,\\[1mm]
 \pl_\nu\phi=0,\ \ &x\in\pl\oo, \ \ t>\sigma,\\[1mm]
 \phi(x,\sigma)=0, &x\in\oo.
 \end{array}\right.
 \eess
Thanks to the maximal Sobolev regularity properties of the Neumann heat semigroup $(e^{t\tr})_{t\ge0}$ (\cite{Giga-Sohr}), there holds
\bes
\int_\sigma^{\sigma+2}\|\phi(\cdot,s)\|_{W^2_{p+1}(\oo)}^{p+1}\ds\le  C_4\int_\sigma^{\sigma+2}\|\rho_\sigma'(s)\vp(\cdot,s)\|_{p+1}^{p+1}\ds
\le 2C_4|\oo|\|\rho'\|_{C^1(\R)}^{p+1} Q^{p+1}\le C_5 Q^{p+1},\quad\label{3.3b}
\ees
where positive constants $C_4=C_4(n,p,\oo)$ and $C_5=2C_3C_4|\oo|$. Noticing that $\phi(\cdot,t)=\vp(\cdot,t)$ for $t>\sigma+1$. Hence, we have from \eqref{3.3b} that
\bes
\int_{\sigma+1}^{\sigma+2}\|\vp(\cdot,s)\|_{W^2_{p+1}(\oo)}^{p+1}\ds\le C_5Q^{p+1},\ \ \sigma\in(0,T-2).\label{3.4}
\ees

Let $z(x,t)=w(x,t)-\vp(x,t)$ for $x\in\oo$ and $t\in[\sigma,T)$, then $z$ satisfies
\bess
 \left\{\begin{array}{lll}
 z_t=\tr z+f(x,t),&x\in\oo,\ \ t\in(\sigma,T),\\[1mm]
 \pl_\nu z=0,\ \ &x\in\pl\oo, \ \ t\in(\sigma,T),\\[1mm]
 z(x,\sigma)=0, &x\in\oo.
 \end{array}\right.
 \eess
Again by the maximal Sobolev regularity properties of the Neumann heat semigroup $(e^{t\tr})_{t\ge0}$ (\cite{Giga-Sohr}), one can find $C_6,C_7>0$ depending on $\ma$ such that
 \bes
\int_{\sigma}^{\sigma+2}\!\!\|z(\cdot,s)\|_{W^2_{p+1}(\oo)}^{p+1}\ds
\le C_6\int_{\sigma}^{\sigma+2}\!\!\|f(\cdot,s)\|_{p+1}^{p+1}\ds
\le C_7H_0^{p+1}. \label{3.5}
\ees
Note that
\[\|w(\cdot,s)\|_{W^2_{p+1}(\oo)}\le\|z(\cdot,s)\|_{W^2_{p+1}(\oo)}
+\|\vp(\cdot,s)\|_{W^2_{p+1}(\oo)},\ \ \forall\ s\in(\sigma+1,\sigma+2).\]
This combined with \eqref{3.4} and \eqref{3.5} yields that
\bess
\int_{\sigma+1}^{\sigma+2}\|w(\cdot,s)\|_{W^2_{p+1}(\oo)}^{p+1}\ds\le C_8H_0^{p+1},\ \ \sigma\in(0,T-2),
\eess
and hence
\bess
\int_{t_0}^{t_0+1}\|w(\cdot,s)\|_{W^2_{p+1}(\oo)}^{p+1}\ds\le C_8H_0^{p+1},\ \ t_0\in(1,T-1)
\eess
for the positive constant $C_8=C_8(\ma)$. We thus obtain \eqref{3.3a} for $t\in(1,T-1)$ due to the arbitrariness of $t_0$ in $(1,T-1)$. Recalling \eqref{3.3} with $\tau=1$, we get \eqref{3.3a}.

{\bf Case II: $T\le2$}. In this case $\tau=T/2$, i.e., $T=2\tau$. Due to \eqref{3.3}, we obtain \eqref{3.2}.
\end{proof}

Thanks to Lemmas \ref{l3.1}, \ref{l3.2}, we establish an uniform-in-time $L^4$ regularity and a space-time $L^6$ estimate for $\nn u$.
\begin{lem}\label{l3.3}
Let $n\ge2$ and $\eta_i=0$ with $i=1,2$. Then there exists $C=C(\ma)>0$ such that
\bes
\ii|\nn u(\cdot,t)|^{2p}\le C\|u_0\|_{W^1_{2p}(\oo)}^{2p}\big[(\chi G_0)^{2(p+1)}+(\chi H_0)^{p+1}+1\big],\ \ t\in(0,T).\label{3.6}
\ees
Moreover, there is $C'=C'(\chi,A,B,\ma)>0$ such that
\bes
\int_t^{t+\tau}\ii|\nn u|^{2(p+1)}\dx\ds\le C',\ \ t\in(0,T-\tau).\label{3.6b}
\ees
\end{lem}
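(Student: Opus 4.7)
I would derive an energy-type differential inequality for $y(t):=\ii|\nn u|^{2p}$ by testing the $u$-equation against $-\nn\cdot(|\nn u|^{2p-2}\nn u)$, then apply Lemma \ref{l2.2} to obtain \eqref{3.6} and extract \eqref{3.6b} from the associated space-time dissipation via Sobolev embedding.

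Using $u_t=\tr u-\chi\nn\cdot(u\nn w)$ together with the pointwise identities $\frac{1}{2p}\pl_t|\nn u|^{2p}=|\nn u|^{2p-2}\nn u\cdot\nn u_t$ and $\nn u\cdot\nn\tr u=\frac12\tr|\nn u|^2-|D^2u|^2$, two integrations by parts (whose boundary contributions are tamed using $\pl_\nu u=0$ and the standard trace inequality $\pl_\nu|\nn u|^2\le c_\oo|\nn u|^2$ on $\pl\oo$) produce on the left a dissipation of the form $E(t):=\ii|\nn u|^{2p-2}|D^2u|^2+\tfrac{p-1}{2}\ii|\nn u|^{2p-4}\bigl|\nn(|\nn u|^2)\bigr|^2$, and on the right a taxis term dominated pointwise by two families: $\chi|\nn u|^{2p-1}|D^2 u||\nn w|$ and $\chi u|\nn u|^{2p-2}|D^2 u||\tr w|$, the latter benefiting from $\|u\|_\yy\le A$ on $(0,T)$ supplied by \eqref{3.1}.

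Each family I would treat by a triple scaled Young/H\"older with exponents $(2,\,2(p+1)/p,\,2(p+1))$ and $(2,\,2(p+1)/(p-1),\,p+1)$ respectively, so that the $L^2$-factor $|\nn u|^{p-1}|D^2 u|$ is absorbed into $E(t)$, the $w$-factor falls under the bounds $\|\nn w\|_{2(p+1)}\le CG_0$ and $\int_t^{t+\tau}\|\tr w\|_{p+1}^{p+1}\ds\le CH_0^{p+1}$ supplied by Lemmas \ref{l3.1} and \ref{l3.2}, and the remaining self-interpolation factor becomes $\ii|\nn u|^{2(p+1)}$. I control the latter via the Sobolev embedding $W^{1,2}(\oo)\hk L^{2(p+1)/p}(\oo)$ applied to $\phi:=|\nn u|^p$, valid precisely because $p=\min\{k\in\mathbb{N}:k>n/2\}$ places $2(p+1)/p$ within $2^*=2n/(n-2)$ (trivially for $n=2$); combined with Gagliardo--Nirenberg interpolation whose $E$-exponent $n/(2p)<1$ (again since $p>n/2$) allows this factor to be reabsorbed into $E$ after a further $\ep$-Young splitting. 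The outcome is a closed inequality $y'(t)+ay(t)+cE(t)\le b(t)$ with $\int_t^{t+\tau}b(s)\ds\le C[(\chi G_0)^{2(p+1)}+(\chi H_0)^{p+1}]$ and $a,c,C>0$ depending only on $\ma$; Lemma \ref{l2.2} together with $y(0)\le\|u_0\|_{W^1_{2p}(\oo)}^{2p}$ then delivers \eqref{3.6}.

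For \eqref{3.6b}, integrating the same inequality over $(t,t+\tau)$ supplies $\int_t^{t+\tau}E(s)\ds\le C'$; Kato's inequality $\bigl|\nn(|\nn u|)\bigr|\le|D^2u|$ converts this into $\int_t^{t+\tau}\|\nn\phi(\cdot,s)\|_2^2\ds\le C'$, and the Sobolev embedding applied once more to $\phi$ yields $\int_t^{t+\tau}\ii|\nn u|^{2(p+1)}\dx\ds\le C'$. The principal obstacle is threading the interlocking Young/H\"older splittings and the Gagliardo--Nirenberg interpolation so that every power of $|\nn u|$ produced by the taxis term lands either in the dissipation $E$ or in a norm supplied by the $w$-estimates; the critical sublinearity $n/(2p)<1$ of the $E$-exponent, enabled by the specific choice $p=\min\{k\in\mathbb{N}:k>n/2\}$, is precisely what closes the bootstrap.
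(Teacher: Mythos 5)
Your overall architecture coincides with the paper's proof: the same energy identity obtained by testing against $-\nn\cdot(|\nn u|^{2p-2}\nn u)$, the identity $\nn u\cdot\nn\tr u=\frac12\tr|\nn u|^2-|D^2u|^2$ with the boundary term handled by a trace inequality, the same grouping of the taxis contributions into an $|\nn w|$-family and a $|\tr w|$-family with the same H\"older/Young exponents, and the final appeal to Lemmas \ref{l3.1}, \ref{l3.2} and \ref{l2.2}. The gap lies in the one step where you deviate: the reabsorption of the self-interpolation factor $\ii|\nn u|^{2(p+1)}$. Writing $\phi=|\nn u|^p$, $y=\ii|\nn u|^{2p}=\|\phi\|_2^2$ and $E=\ii|\nn u|^{2(p-1)}|D^2u|^2$, your Gagliardo--Nirenberg step gives $\ii|\nn u|^{2(p+1)}=\|\phi\|_{2(p+1)/p}^{2(p+1)/p}\le C\,E^{n/(2p)}\,y^{(2p+2-n)/(2p)}+C\,y^{(p+1)/p}$. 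The exponent on $E$ is indeed $n/(2p)<1$, but the two exponents sum to $(p+1)/p>1$: the estimate is supercritical in $(E,y)$ jointly. After the $\ep$-Young splitting that absorbs $E^{n/(2p)}$ into $cE$, the complementary factor is not a constant but $y^{(2p+2-n)/(2p-n)}=y^{1+2/(2p-n)}$ (and since $2p-n\in\{1,2\}$ by minimality of $p$, this exponent is at least $2$). So what your scheme actually produces is $y'+ay+cE\le b(t)+C\,y^{1+2/(2p-n)}+C\,y^{(p+1)/p}$, not the claimed closed inequality $y'+ay+cE\le b(t)$; Lemma \ref{l2.2} does not apply to a superlinearly forced inequality, and no smallness of $y(0)$ is available at this stage, since Lemma \ref{l3.3} is stated for arbitrary data and smallness only enters later, in Lemma \ref{l3.6}.

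The paper closes this step by a device your proposal is missing: the functional inequality of \cite[Lemma 2.2]{Lankeit-Wang} (see \eqref{3.5a}), $\ii|\nn u|^{2(p+1)}\le 2(n+4p^2)\|u\|_\yy^2\ii|\nn u|^{2(p-1)}|D^2u|^2$. Because $\|u(\cdot,t)\|_\yy\le A$ on $(0,T)$ by the very definition of $T$ (see \eqref{3.1}), this controls $\ii|\nn u|^{2(p+1)}$ \emph{linearly} by the dissipation $E$, with constant $k=2(n+4p^2)A^2$, leaving no power of $y$ behind; it is proved by integrating by parts so that derivatives land on $u$ itself, which is exactly what lets the $L^\yy$ bound enter. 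Your Kato-plus-Sobolev route applied to $\phi$ never sees $\|u\|_\yy$ and therefore cannot reproduce this linear absorption. Your derivation of \eqref{3.6b} (space-time bound on $E$ converted via Kato and Sobolev) would be sound once \eqref{3.6} and $\int_t^{t+\tau}E\,\ds\le C'$ are in hand, but as written it inherits the gap, since \eqref{3.6} itself is not established by your argument.
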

\begin{proof}
We first recall from \cite[Lemma 2.2]{Lankeit-Wang} that, for $t\in(0,T_m)$,
\bes
\ii|\nn u|^{2(p+1)}\le2(n+4p^2)\|u\|_\yy^2\ii|\nn u|^{2(p-1)}|D^2u|^2,\label{3.5a}
\ees
and hence by \eqref{3.1}, we have
\bes
\ii|\nn u|^{2(p+1)}\le 2(n+4p^2)A^2\ii|\nn u|^{2(p-1)}|D^2u|^2=:k\ii|\nn u|^{2(p-1)}|D^2u|^2,\ \ t\in(0,T).\qquad\label{3.6a}
\ees
It follows from the first equation in \eqref{1.1} that
\bes
&&\df1{2p}\dv\ii|\nn u|^{2p}+\ii|\nn u|^{2p}\nm\\[0.5mm]
&=&\ii|\nn u|^{2(p-1)}\nn u\cdot\nn u_t+\ii|\nn u|^{2p}\nm\\[0.5mm]
&=&\ii|\nn u|^{2(p-1)}\nn u\cdot\nn(\tr u-\chi\nn\cdot(u\nn w))+\ii|\nn u|^{2p}\nm\\[0.5mm]
&=&\ii|\nn u|^{2(p-1)}\nn u\cdot\nn\tr u+\chi\ii\nn\cdot(|\nn u|^{2(p-1)}\nn u)(\nn\cdot(u\nn w))+\ii|\nn u|^{2p}\nm\\[0.5mm]
&=:&I(t)+J(t)+\ii|\nn u|^{2p},\ \ t\in(0,T_m).\label{3.7}
\ees
In view of \cite[Lemma 2.6 (ii)]{WjpW-CAMWA}, there is $C_1=C_1(n,p,\oo)>0$ such that
\bess
\int_{\pl\oo}|\nn u|^{2(p-1)}\pl_\nu|\nn u|^2{\rm d}S\le(p-1)\ii|\nn u|^{2(p-2)}\big|\nn|\nn u|^2\big|^2+C_1\ii|\nn u|^{2p},\ \ t\in(0,T_m).
\eess
Hence, by direct computations, we have
\bes
I(t)&=&\df12\ii|\nn u|^{2(p-1)}\tr|\nn u|^2-\ii|\nn u|^{2(p-1)}|D^2u|^2\nm\\[0.5mm]
&=&-\df12\ii\nn|\nn u|^{2(p-1)}\cdot\nn|\nn u|^2+\df12\int_{\partial\oo}|\nn u|^{2(p-1)}\partial_\nu|\nn u|^2{\rm d}S-\ii|\nn u|^{2(p-1)}|D^2u|^2\nm\\[0.5mm]
&=&-\df{p-1}2\ii|\nn u|^{2(p-2)}\kk|\nn|\nn u|^2\rr|^2+\df12\int_{\partial\oo}|\nn u|^{2(p-1)}\partial_\nu|\nn u|^2{\rm d}S-\ii|\nn u|^{2(p-1)}|D^2u|^2\nm\\[0.5mm]
&\le&-\ii|\nn u|^{2(p-1)}|D^2u|^2+\df{C_1}{2}\ii|\nn u|^{2p},\ \ t\in(0,T_m).\label{3.8}
\ees
Next, we estimate $J(t)$. It is easy to deduce that
\bes
J(t)&=&\chi\ii\kk(\nn|\nn u|^{2(p-1)}\cdot\nn u+|\nn u|^{2(p-1)}\tr u\rr)\kk(\nn u\cdot\nn w+u\tr w\rr)\nm\\[0.5mm]
&=&\chi\ii\kk(\nn|\nn u|^{2(p-1)}\cdot\nn u\rr)(\nn u\cdot\nn w)+\chi\ii u\tr w\kk(\nn|\nn u|^{2(p-1)}\cdot\nn u\rr)\nm\\[0.5mm]
&&+\chi\ii|\nn u|^{2(p-1)}\tr u\kk(\nn u\cdot\nn w\rr)+\chi\ii u|\nn u|^{2(p-1)}\tr u\tr w\nm\\[0.5mm]
&=:&J_1(t)+J_2(t)+J_3(t)+J_4(t),\ \ t\in(0,T_m).\label{3.9}
\ees
Noticing that $\nn|\nn u|^2=2D^2u\cdot\nn u$. Using Young's inequality: $ab\le |a|^q+|b|^{q'}$ with $q>1$ and $q'=q/(q-1)$, \eqref{3.1} and \eqref{3.6a}, the terms $J_1(t), J_2(t)$ in the right hand side of \eqref{3.9} can be estimated as
\bes
J_1(t)&=&\chi(p-1)\ii\kk(|\nn u|^{2(p-2)}\nn|\nn u|^2\cdot\nn u\rr)(\nn u\cdot\nn w)\nm\\[0.5mm]
&\le&\chi(p-1)\ii|\nn u|^{2(p-1)}|\nn w|\kk|\nn|\nn u|^2\rr|\nm\\[0.5mm]
&=&2\chi(p-1)\ii|\nn u|^{2(p-1)}|\nn w|\kk|D^2u\cdot\nn u\rr|\nm\\[0.5mm]
&\le&2\chi(p-1)\ii|\nn u|^{2p-1}|\nn w||D^2u|\nm\\[0.5mm]
&\le&\df18\ii|\nn u|^{2(p-1)}|D^2u|^2+8\chi^2(p-1)^2\ii|\nn u|^{2p}|\nn w|^2\nm\\[0.5mm]
&\le&\df18\ii|\nn u|^{2(p-1)}|D^2u|^2+\df{1}{8k}\ii|\nn u|^{2(p+1)}+8^{2p+1}k^p[\chi(p-1)]^{2(p+1)}\ii|\nn w|^{2(p+1)}\nm\\[0.5mm]
&\le&\df14\ii|\nn u|^{2(p-1)}|D^2u|^2+8^{2p+1}k^p[\chi(p-1)]^{2(p+1)}\ii|\nn w|^{2(p+1)}\nm\\[0.5mm]
&=&\df14\ii|\nn u|^{2(p-1)}|D^2u|^2+C_2A^{2p}\chi^{2(p+1)}\ii|\nn w|^{2(p+1)},\ \ t\in(0,T),\label{3.10}
\ees
where $C_2=8^{2p+1}[2(n+4p^2)]^p(p-1)^{2(p+1)}$ due to the definition of $k$, and
\bes
J_2(t)&=&\chi(p-1)\ii u|\nn u|^{2(p-2)}\tr w\kk(\nn|\nn u|^2\cdot\nn u\rr)\nm\\[0.5mm]
&\le&\chi(p-1)\ii u|\tr w||\nn u|^{2p-3}\kk|\nn|\nn u|^2\rr|\nm\\[0.5mm]
&\le&2\chi(p-1)\ii u|\tr w||\nn u|^{2(p-1)}|D^2 u|\nm\\[0.5mm]
&\le&\df18\ii|\nn u|^{2(p-1)}|D^2u|^2+8[\chi A(p-1)]^2\ii|\nn u|^{2(p-1)}|\tr w|^2\nm\\[0.5mm]
&\le&\df18\ii|\nn u|^{2(p-1)}|D^2u|^2+\df{1}{8k}\ii|\nn u|^{2(p+1)}+8^pk^{(p-1)/2}[\chi A(p-1)]^{p+1}\ii|\tr w|^{p+1}\nm\\[0.5mm]
&\le&\df14\ii|\nn u|^{2(p-1)}|D^2u|^2+8^pk^{(p-1)/2}[\chi A(p-1)]^{p+1}\ii|\tr w|^{p+1}\nm\\[0.5mm]
&=&\df14\ii|\nn u|^{2(p-1)}|D^2u|^2+C_3A^{2p}\chi^{p+1}\ii|\tr w|^{p+1},\ \ t\in(0,T)\label{3.11}
\ees
with $C_3=8^p\kk[2(n+4p^2)\rr]^{(p-1)/2}(p-1)^{p+1}$. In view of \eqref{3.1}, \eqref{3.6a} and the known inequality: $|\tr u|\le\sqrt{n}|D^2u|$, we estimate the last two terms in the right hand side of \eqref{3.9} as follows:
\bes
J_3(t)&\le&\sqrt{n}\chi\ii|\nn u|^{2p-1}|\nn w||D^2u|\nm\\[0.5mm]
&\le&\df18\ii|\nn u|^{2(p-1)}|D^2u|^2+2n\chi^2\ii|\nn u|^{2p}|\nn w|^2\nm\\[0.5mm]
&\le&\df18\ii|\nn u|^{2(p-1)}|D^2u|^2+\df{1}{8k}\ii|\nn u|^{2(p+1)}+(8k)^p\kk(2n\chi^2\rr)^{p+1}\ii|\nn w|^{2(p+1)}\nm\\[0.5mm]
&\le&\df14\ii|\nn u|^{2(p-1)}|D^2u|^2+(8k)^p\kk(2n\chi^2\rr)^{p+1}\ii|\nn w|^{2(p+1)}\nm\\[0.5mm]
&=&\df14\ii|\nn u|^{2(p-1)}|D^2u|^2+C_4A^{2p}\chi^{2(p+1)}\ii|\nn w|^{2(p+1)},\ \ t\in(0,T)\label{3.12}
\ees
with $C_4=\kk[16(n+4p^2)\rr]^p(2n)^{p+1}$, and
\bes
J_4(t)&\le&\sqrt{n}A\chi\ii|\nn u|^{2(p-1)}|D^2u||\tr w|\nm\\[0.5mm]
&\le&\df18\ii|\nn u|^{2(p-1)}|D^2u|^2+2nA^2\chi^2\ii|\nn u|^{2(p-1)}|\tr w|^2\nm\\[0.5mm]
&\le&\df18\ii|\nn u|^{2(p-1)}|D^2u|^2+\df{1}{16k}\ii|\nn u|^{2(p+1)}+(16k)^{\frac{p-1}2}(2nA^2\chi^2)^{\frac{p+1}2}\ii|\tr w|^{p+1}\nm\\[0.5mm]
&\le&\df{3}{16}\ii|\nn u|^{2(p-1)}|D^2u|^2+(16k)^{(p-1)/2}(2nA^2\chi^2)^{\frac{p+1}2}\ii|\tr w|^{p+1}\nm\\[0.5mm]
&=&\df{3}{16}\ii|\nn u|^{2(p-1)}|D^2u|^2+C_5A^{2p}\chi^{p+1}\ii|\tr w|^{p+1},\ \ t\in(0,T),\label{3.13}
\ees
where $C_5=[32(n+4p^2)]^{(p-1)/2}(2n)^{\frac{p+1}2}$. Inserting \eqref{3.10}-\eqref{3.13} into \eqref{3.9} we find that
 \bes
J(t)\le\df{15}{16}\ii|\nn u|^2|D^2u|^2+C'A^{2p}\chi^{2(p+1)}\ii|\nn w|^{2(p+1)}+C''A^{2p}\chi^{p+1}\ii|\tr w|^{p+1}\nm, \ \ t\in(0,T)
 \ees
with $C'=C_2+C_4$, $C''=C_3+C_5$. Plugging this and \eqref{3.8} into \eqref{3.7} yields that, for $t\in(0,T)$,
\bes
&&\df1{2p}\dv\ii|\nn u|^{2p}+\ii|\nn u|^{2p}+\df1{16}\ii|\nn u|^{2(p-1)}|D^2u|^2\nm\\[0.5mm]
&\le&C'A^{2p}\chi^{2(p+1)}\ii|\nn w|^{2(p+1)}+C''A^{2p}\chi^{p+1}\ii|\tr w|^{p+1}+\kk(\frac{C_1}{2}+1\rr)\ii|\nn u|^{2p}.\qquad\label{3.14}
\ees
By using Young's inequality and \eqref{3.6a}, one has
\bess
\kk(\frac{C_1}{2}+1\rr)\ii|\nn u|^{2p}&\le& \df{1}{32k}\ii|\nn u|^{2(p+1)}+(32k)^p\kk(\frac{C_1}{2}+1\rr)^{p+1}|\oo|\\[0.5mm]
&\le&\df1{32}\ii|\nn u|^{2(p-1)}|D^2u|^2+(32k)^p\kk(\frac{C_1}{2}+1\rr)^{p+1}|\oo|\\[0.5mm]
&=&\df1{32}\ii|\nn u|^{2(p-1)}|D^2u|^2+C_6A^{2p},\ \ t\in(0,T)
\eess
with $C_6=[64(n+4p^2)]^p\kk(\frac{C_1}{2}+1\rr)^{p+1}|\oo|$. Inserting this into \eqref{3.14} gives that, for $t\in(0,T)$,
\bes
&&\df1{2p}\dv\ii|\nn u|^{2p}+\ii|\nn u|^{2p}+\df1{32}\ii|\nn u|^{2(p-1)}|D^2u|^2\nm\\[0.5mm]
&\le&C'A^{2p}\chi^{2(p+1)}\ii|\nn w|^{2(p+1)}+C''A^{2p}\chi^{p+1}\ii|\tr w|^{p+1}+C_6A^{2p}\nm\\[0.5mm]
&\le&C_7A^{2p}\kk(\chi^{2(p+1)}\ii|\nn w|^{2(p+1)}+\chi^{p+1}\ii|\tr w|^{p+1}+1\rr)=:g(t),\ \ t\in(0,T),\label{3.16}
\ees
where $C_7=\max\{C',\,C'',\,C_6\}$ depending only on $n,p,\oo$. Applying \eqref{3.1a} and \eqref{3.2}, there exists $C_8=C_8(\ma)>0$ such that
\bess
\int_t^{t+\tau}g(s)\ds&\le& C_8A^{2p}\big[(\chi G_0)^{2(p+1)}+(\chi H_0)^{p+1}+1\big],\ \ t\in(0,T-\tau).
\eess
In view of Lemma \ref{l2.2} with $a=1$, $b=C_8A^{2p}\big[(\chi G_0)^{2(p+1)}+(\chi H_0)^{p+1}+1\big]$, $y(t)=\frac1{2p}\ii|\nn u|^4$ and $f(t)=g(t)$,
we find $C_9,C_{10}>0$ depending on $\ma$ such that
\bess
\ii|\nn u(\cdot,t)|^{2p}&\le& C_9A^{2p}\big[(\chi G_0)^{2(p+1)}+(\chi H_0)^{p+1}+1\big]+C_9\|\nn u_0\|_{2p}^{2p}\\[0.5mm]
&\le&C_{10}\|u_0\|_{W^1_{2p}(\oo)}^{2p}\big[(\chi G_0)^{2(p+1)}+(\chi H_0)^{p+1}+1\big],\ \ t\in(0,T),
\eess
where we have used $A=2\|u_0\|_\yy$ and the embedding: $W^1_{2p}(\oo)\hookrightarrow L^\yy(\oo)$ with $p>n/2$ in the derivation of the last inequality. This establishes \eqref{3.6}.

Integrating \eqref{3.16} from $t$ to $t+\tau$ with $t\in(0,T-\tau)$, and using \eqref{3.1a}, \eqref{3.2} and \eqref{3.6}, we have
\bess
\int_t^{t+\tau}\ii|\nn u|^{2(p-1)}|D^2u|^2\dx\ds\le C^*,\ \ t\in(0,T-\tau)
\eess
for some $C^*>0$ depending on $\chi,A,B,\ma$. This in conjunction with \eqref{3.6a} implies
\[\int_t^{t+\tau}\ii|\nn u|^{2(p+1)}\dx\ds\le \df{C^*}{2(n+4p^2)A^2},\ \ t\in(0,T-\tau).\]
Hence, we finally get \eqref{3.6b} and the proof is completed.
\end{proof}

\begin{remark}\label{r3.1}
From the proof of Lemma \ref{l3.3}, with some minor modifications, we see that for $\eta_1,\eta_2\ge0$ and $m,l>1$, the inequality \eqref{3.16} holds as well provided $u$ is uniformly bounded in time. And, the inequality \eqref{3.16} also plays an important role in the proofs of Theorems \ref{t1.2}, \ref{t1.3}.
\end{remark}

Based on Lemma \ref{l3.1}, we obtain uniform $L^\yy$ boundedness for $u$.
\begin{lem}\label{l3.4}
Let $n\ge2$, $\eta_i=0$ with $i=1,2$. Then there exists $C=C(\ma)>0$ such that
\bess
\|u(\cdot,t)\|_\yy\le \|u_0\|_\yy+CA\chi G_0,\ \ t\in(0,T).
\eess
\end{lem}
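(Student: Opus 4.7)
The plan is to represent $u$ by the Neumann heat semigroup and control the convective term via the $L^{2(p+1)}$-bound on $\nabla w$ already secured in Lemma~\ref{l3.1}. Since $\eta_1=0$, the first equation in \eqref{1.1} yields the Duhamel formula
\bess
u(\cdot,t)=e^{t\tr}u_0-\chi\int_0^t e^{(t-s)\tr}\nn\cdot\big(u(\cdot,s)\nn w(\cdot,s)\big)\ds,\qquad t\in(0,T).
\eess
Taking $L^\yy$-norms and using $\|e^{t\tr}u_0\|_\yy\le\|u_0\|_\yy$ together with the standard smoothing estimate for the Neumann heat semigroup (\cite[Lemma 1.3]{M.W.2010}), there exist constants $\lm_1,C_1>0$ depending only on $\ma$ such that
\bess
\|u(\cdot,t)\|_\yy\le\|u_0\|_\yy+C_1\chi\int_0^t\big(1+(t-s)^{-\frac12-\frac{n}{2q}}\big)e^{-\lm_1(t-s)}\,\big\|u(\cdot,s)\nn w(\cdot,s)\big\|_q\ds
\eess
for any admissible $q$. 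The natural choice here is $q=2(p+1)$, which matches the Lebesgue exponent appearing in Lemma~\ref{l3.1}.

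Next I would bound the integrand. By \eqref{3.1} one has $\|u(\cdot,s)\|_\yy\le A$ on $(0,T)$, so H\"older's inequality and Lemma~\ref{l3.1} give
\bess
\|u(\cdot,s)\nn w(\cdot,s)\|_{2(p+1)}\le \|u(\cdot,s)\|_\yy\|\nn w(\cdot,s)\|_{2(p+1)}\le C_2 A G_0,\qquad s\in(0,T),
\eess
for a constant $C_2=C_2(\ma)>0$. Substituting this into the Duhamel estimate reduces the matter to the time integral
\bess
\int_0^t\big(1+(t-s)^{-\frac12-\frac{n}{4(p+1)}}\big)e^{-\lm_1(t-s)}\ds.
\eess

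The only subtlety is checking that this integral is finite and bounded uniformly in $t$. By the definition of $p$ one has $p>n/2$, hence $2(p+1)>n+2$, so
\bess
\tfrac12+\tfrac{n}{4(p+1)}<\tfrac12+\tfrac{n}{2(n+2)}<1,
\eess
which makes the singularity at $s=t$ integrable; together with the exponential factor, the integral is bounded by some $C_3=C_3(\ma)>0$ uniformly for $t\in(0,T)$. Combining the three displays yields
\bess
\|u(\cdot,t)\|_\yy\le\|u_0\|_\yy+C_1C_2C_3\,\chi A G_0,\qquad t\in(0,T),
\eess
which, after renaming constants, is the desired bound. The main thing to verify carefully is the exponent inequality $\tfrac12+\tfrac{n}{2q}<1$ for $q=2(p+1)$; everything else is routine once Lemma~\ref{l3.1} is in hand.
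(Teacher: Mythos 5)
Your proposal is correct and follows essentially the same route as the paper's proof: a Duhamel representation of $u$, the $L^\yy$--$L^{2(p+1)}$ smoothing estimate for the Neumann heat semigroup, the bound $\|u\nn w\|_{2(p+1)}\le C A G_0$ from \eqref{3.1} and Lemma \ref{l3.1}, and the integrability check $\frac12+\frac{n}{4(p+1)}<1$ coming from $p+1>\frac{n}{2}+1$. The only cosmetic difference is the citation for the semigroup estimate (the paper uses \cite[Lemma 3.3]{Fujie-I-W} rather than \cite{M.W.2010}), which does not affect the argument.
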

\begin{proof}
By using the standard $L^p$-$L^q$ estimates for $(e^{t\tr})_{t\ge0}$ (\cite[Lemma 3.3]{Fujie-I-W}), there exist positive constants $\lm_1,\,C_1$ depending on $n,\oo$ such that
\bess
\|u(\cdot,t)\|_\yy&\le& \|u_0\|_\yy+\chi\int_0^t\|e^{(t-s)\tr}\nn\cdot(u\nn w)\|_\yy\ds\nm\\[0.5mm]
&\le&\|u_0\|_\yy+C_1\chi\int_0^t\kk(1+(t-s)^{-\frac12-\frac{n}{ 4(p+1)}}\rr)e^{-\lm_1(t-s)}\|u\nn w\|_{2(p+1)}\ds,\ \ t\in(0,T).
\eess
Thanks to \eqref{3.1} and Lemma \ref{l3.1}, there is $C_2=C_2(\ma)>0$ such that
\[\|u(\cdot,\sigma)\nn w(\cdot,\sigma)\|_{2(p+1)}\le C_2AG_0,\ \ \sigma\in(0,T).\]
This in conjunction with the fact that $0<\frac12+\frac{n}{4(p+1)}<1$ due to $p+1>\frac{n}2+1$, one can find $C_3>0$ depending on $\ma$ fulfilling
\bess
\|u(\cdot,t)\|_\yy&\le&\|u_0\|_\yy+C_1C_2G_0\chi \int_0^t\kk(1+(t-s)^{-\frac12-\frac{n}{ 4(p+1)}}\rr)e^{-\lm_1(t-s)}\ds
\le\|u_0\|_\yy+C_3A\chi G_0
\eess
for all $t\in(0,T)$. The proof is finished.
\end{proof}

With Lemma \ref{l3.3} at hand, we derive an upper bound for $v$ in $(0,T)$.
\begin{lem}\label{l3.5}
Let $n\ge2$, $\eta_1=\eta_2=0$. Then there exists $C=C(\ma)>0$ such that, for $t\in(0,T)$,
\bess
\|v(\cdot,t)\|_\yy\le\|v_0\|_\yy+C\xi B\|u_0\|_{W^1_{2p}(\oo)}\kk[(\chi G_0)^{2(p+1)}+(\chi H_0)^{p+1}+1\rr]^{1/2p}.
\eess
\end{lem}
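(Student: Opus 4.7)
The plan is to mirror the approach of Lemma \ref{l3.4}: apply the variation-of-constants formula to the equation $v_t=\Delta v-\xi\nabla\cdot(v\nabla u)$, and combine the standard $L^p$–$L^q$ smoothing estimates for $(e^{t\Delta})_{t\ge 0}$ acting on gradient terms with the pointwise bound $\|v(\cdot,t)\|_\infty\le B$ from \eqref{3.1} and the $L^{2p}$-bound for $\nabla u$ supplied by Lemma \ref{l3.3}.

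First I would write
\[
\|v(\cdot,t)\|_\infty\le \|v_0\|_\infty+\xi\int_0^t\bigl\|e^{(t-s)\Delta}\nabla\cdot(v\nabla u)(\cdot,s)\bigr\|_\infty\,ds.
\]
Then I invoke the Neumann heat semigroup estimate (as in \cite[Lemma 3.3]{Fujie-I-W}): there exist $\lambda_1,C_1>0$ depending on $n,\Omega$ such that for the exponent $q=2p$,
\[
\bigl\|e^{(t-s)\Delta}\nabla\cdot f\bigr\|_\infty\le C_1\Bigl(1+(t-s)^{-\frac12-\frac{n}{4p}}\Bigr)e^{-\lambda_1(t-s)}\|f\|_{2p}.
\]
The crucial point is that the singularity is integrable: since $p=\min\{k\in\mathbb N:k>n/2\}$, we have $2p>n$, so $\frac12+\frac{n}{4p}<1$, and the temporal integral $\int_0^t(1+\sigma^{-1/2-n/(4p)})e^{-\lambda_1\sigma}\,d\sigma$ is bounded uniformly in $t>0$.

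Next I estimate the integrand using Hölder's inequality together with \eqref{3.1}:
\[
\|v(\cdot,s)\nabla u(\cdot,s)\|_{2p}\le \|v(\cdot,s)\|_\infty\,\|\nabla u(\cdot,s)\|_{2p}\le B\,\|\nabla u(\cdot,s)\|_{2p},\qquad s\in(0,T),
\]
and then Lemma \ref{l3.3} yields
\[
\|\nabla u(\cdot,s)\|_{2p}\le C_2\|u_0\|_{W^1_{2p}(\Omega)}\bigl[(\chi G_0)^{2(p+1)}+(\chi H_0)^{p+1}+1\bigr]^{1/(2p)}
\]
with some $C_2=C_2(\Sigma)>0$. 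Substituting this into the Duhamel inequality and using the uniform bound on the time integral produces precisely the claimed estimate.

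No step is genuinely difficult here; the only subtlety is matching exponents so that both ingredients fit together — choosing the intermediate Lebesgue index $q=2p$ keeps the semigroup kernel integrable (via $p>n/2$) while simultaneously making $\|\nabla u\|_{q}$ accessible through Lemma \ref{l3.3}. The uniform boundedness of $v$ over $(0,T)$, not over $(0,T_m)$, is essential, since it is exactly \eqref{3.1} that closes the pointwise estimate for $\|v\nabla u\|_{2p}$.
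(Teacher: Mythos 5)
Your proposal is correct and follows essentially the same route as the paper's own proof: the Duhamel formula for the $v$-equation, the Neumann heat semigroup estimate of \cite[Lemma 3.3]{Fujie-I-W} with exponent $2p$ (integrable singularity since $p>n/2$), the bound $\|v\nn u\|_{2p}\le B\|\nn u\|_{2p}$ from \eqref{3.1}, and the $L^{2p}$-estimate for $\nn u$ from Lemma \ref{l3.3}. No gaps; the exponent bookkeeping matches the paper exactly.
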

\begin{proof}
Thanks to the standard $L^p$-$L^q$ estimates for $(e^{t\tr})_{t\ge0}$ (\cite[Lemma 3.3]{Fujie-I-W}), there exist $\lm_1,\,C_1>0$ depending on $n,p$ such that
\bess
\|v(\cdot,t)\|_\yy&\le& \|v_0\|_\yy+\xi\int_0^t\|e^{(t-s)\tr}\nn\cdot(v\nn u)\|_\yy\ds\nm\\[0.5mm]
&\le&\|v_0\|_\yy+C_1\xi\int_0^t\kk(1+(t-s)^{-\frac12-\frac{n}{4p}}\rr)e^{-\lm_1(t-s)}\|v\nn u\|_{2p}\ds,\ \ t\in(0,T).
\eess
In view of \eqref{3.1} and \eqref{3.6}, there exists $C_2=C_2(\ma)>0$ such that
\bess
\|v(\cdot,\sigma)\nn u(\cdot,\sigma)\|_{2p}&\le& \|v(\cdot,\sigma)\|_\yy\|\nn u(\cdot,\sigma)\|_{2p}\\[0.5mm]
&\le&C_2B\|u_0\|_{W^1_{2p}(\oo)}\kk[(\chi G_0)^{2(p+1)}+(\chi H_0)^{p+1}+1\rr]^{1/2p},\ \ \sigma\in(0,T).
\eess
This combined the fact that $0<\frac12+\frac{n}{4p}<1$ due to $p>\frac{n}2$, we find $C_3=C_3(\ma)>0$ such that
\bess
\|v(\cdot,t)\|_\yy\le\|v_0\|_\yy+C_3\xi B\|u_0\|_{W^1_{2p}(\oo)}\kk[(\chi G_0)^{2(p+1)}+(\chi H_0)^{p+1}+1\rr]^{1/2p},\ \ t\in(0,T).
\eess
The proof is end.
\end{proof}

We finally show the global existence and boundedness of the classical solution of \eqref{1.1} under the condition that the initial data or $\chi,\xi$ are small enough.
\begin{lem}\label{l3.6}\, Let $n\ge2$ and $\eta_1=\eta_2=0$. There exists $\kappa=\kappa(\ma)>0$ such that, if $\chi,\xi$ and $(u_0,v_0,w_0)$ satisfy
\bes
\chi\le\df{\kappa}{G_0},\ \ {\rm and}\ \ \xi\le\df{\kappa}{\|u_0\|_{W^1_{2p}(\oo)}\kk[(\chi G_0)^{2(p+1)}+(\chi H_0)^{p+1}+1\rr]^{1/2p}},\label{3.23a}
\ees
then \qq{1.1} admits a unique nonnegative global solution $(u,v,w)\in [C(\bar\Omega\times [0,\yy))\cap C^{2,1}(\bar\Omega\times (0,\yy))]^3$, and
\bess
\|u(\cdot,t)\|_\yy\le A,\ \ \|v(\cdot,t)\|_\yy\le B,\ \ \|w(\cdot,t)\|_\yy\le Q,\ \ t\in(0,\yy).
\eess
Moreover, there exists $C=C(\chi,\xi,A,B,\ma)>0$ such that
\bes
\|u(\cdot,t)\|_{W^1_{2p}(\oo)}+\|v(\cdot,t)\|_{W^1_{2p}(\oo)}+\|w(\cdot,t)\|_{W^1_{2p}(\oo)}\le C,\ \ t\in(0,\yy).\label{3.24}
\ees
\end{lem}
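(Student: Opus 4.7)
My approach is a bootstrap/contradiction argument organized around the exit-time $T$. The core idea is to choose $\kappa=\kappa(\ma)$ small enough that, under \eqref{3.23a}, Lemmas \ref{l3.4} and \ref{l3.5} yield the strict bounds $\|u(\cdot,t)\|_\yy<A$ and $\|v(\cdot,t)\|_\yy<B$ throughout $(0,T)$. Concretely, since $A=2\|u_0\|_\yy$, Lemma \ref{l3.4} gives $\|u\|_\yy\le A/2+CA\chi G_0$, so the condition $\chi G_0\le\kappa$ in \eqref{3.23a} implies $\|u\|_\yy\le 3A/4$ provided $\kappa\le 1/(4C)$; an analogous calculation with Lemma \ref{l3.5} and the second condition in \eqref{3.23a} yields $\|v\|_\yy\le 3B/4$. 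Continuity of $u,v$ in $t$ then precludes $T<T_m$ by the maximality in the definition of $T$, so $T=T_m$.

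Having shown $T=T_m$, I would invoke the extensibility criterion of Lemma \ref{l2.1} to conclude $T_m=\yy$. This requires uniform $W^1_q$ bounds for some $q>n$, and $2p>n$ is a natural choice. Lemma \ref{l3.1} combined with $\|w\|_\yy\le Q$ delivers $\|w\|_{W^1_{2(p+1)}(\oo)}\le C$, and Lemma \ref{l3.3} together with $\|u\|_\yy\le A$ gives $\|u\|_{W^1_{2p}(\oo)}\le C$. The missing ingredient is a uniform gradient bound on $v$: I would apply standard $L^p$--$L^q$ Neumann heat semigroup estimates to the variation-of-constants form $v(\cdot,t)=e^{t\tr}v_0-\xi\int_0^t e^{(t-s)\tr}\nn\cdot(v\nn u)\ds$, treating the cross-diffusion term in divergence form and using $\|v\nn u\|_{2p}\le\|v\|_\yy\|\nn u\|_{2p}\le C$, while choosing a target exponent $q\in(n,2p)$ so that the resulting time-kernel singularity is integrable; this yields $\|\nn v(\cdot,t)\|_q\le C$. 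The resulting uniform $W^1_q$ bound rules out the blow-up alternative of Lemma \ref{l2.1}, giving $T_m=\yy$, after which a bootstrap (or a direct energy argument mimicking Lemma \ref{l3.3} applied to the $v$-equation with $\nn u$ playing the role of $\nn w$) produces \eqref{3.24} in full.

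The main technical obstacle is precisely this uniform gradient estimate for $v$: unlike for $u$ and $w$, the forcing term $\nn\cdot(v\nn u)$ in the $v$-equation cannot appeal to bounds on $\tr u$ (which would require higher-order regularity of $w$), so the cleanest path is the semigroup estimate sketched above, where one must exploit the strict inequality $q<2p$ to tame an otherwise non-integrable $(t-s)^{-1}$ singularity coming from $\nn e^{(t-s)\tr}\nn\cdot$. Once $\kappa$ is properly calibrated in the first step, the remainder of the proof reduces to careful bookkeeping of the dependencies on $\chi,\xi,A,B,Q,G_0,H_0$ and $\ma$.
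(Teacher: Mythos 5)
Your Step 1 (calibrating $\kappa$ via Lemmas \ref{l3.4} and \ref{l3.5} so that $\|u\|_\yy<A$ and $\|v\|_\yy<B$ hold strictly on $(0,T)$, whence $T=T_m$ by continuity) is exactly the paper's argument, and your endgame via the extensibility criterion of Lemma \ref{l2.1} with $2p>n$ is also the paper's. The gap lies in the one step you yourself identify as the crux: the uniform gradient bound for $v$. Your semigroup route cannot work as described. The composed operator $\nn e^{(t-s)\tr}\nn\cdot$ carries a time singularity of order $(t-s)^{-1}$ no matter how the Lebesgue exponents are chosen: writing $\nn e^{\sigma\tr}\nn\cdot g=\nn e^{(\sigma/2)\tr}\big[e^{(\sigma/2)\tr}\nn\cdot g\big]$, each factor costs at least $\sigma^{-1/2}$, and the smoothing exponent $-\frac n2\big(\frac1{q_1}-\frac1q\big)$ in the $L^{q_1}$--$L^q$ estimates only \emph{adds} singularity when passing from a lower to a higher exponent; passing \emph{down} from $L^{2p}$ to $L^q$ with $q<2p$ removes nothing, since on a bounded domain one simply embeds and keeps $\sigma^{-1/2}$ per factor. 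So the strict inequality $q<2p$ does not tame the $(t-s)^{-1}$ singularity, $\int_0^t(t-s)^{-1}e^{-\lm_1(t-s)}\ds$ diverges logarithmically, and your bound $\|v\nn u\|_{2p}\le C$ does not close the estimate for $\|\nn v(\cdot,t)\|_q$.

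Moreover, the obstruction you cite for avoiding the $\tr u$ route is not real, and that route is precisely how the paper closes the argument. Expanding $\nn\cdot(v\nn u)=\nn v\cdot\nn u+v\tr u$, one needs $\tr u$ only in a \emph{space-time} $L^{p+1}$ sense, and this follows from maximal Sobolev regularity applied to $u_t=\tr u+F$ with $F=-\chi(\nn u\cdot\nn w+u\tr w)$: the ingredients are exactly the bounds already available once $T=T_m$, namely $\|u\|_\yy\le A$, the uniform $L^{2(p+1)}$ bound on $\nn w$ (Lemma \ref{l3.1}), the space-time bound on $|\nn u|^{2(p+1)}$ (the second conclusion \eqref{3.6b} of Lemma \ref{l3.3}, which your sketch never invokes), and the space-time bound on $|\tr w|^{p+1}$ (Lemma \ref{l3.2}); no higher-order regularity of $w$ beyond Lemma \ref{l3.2} is required. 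With $\int_t^{t+\delta}\ii|\tr u|^{p+1}\dx\ds\le C$ in hand, the paper runs the Lemma \ref{l3.3}-type differential inequality $\dv\ii|\nn v|^{2p}+\ii|\nn v|^{2p}\le C\big(\ii|\nn u|^{2(p+1)}+\ii|\tr u|^{p+1}+1\big)$ and concludes by the ODE comparison Lemma \ref{l2.2}. Your parenthetical remark about a direct energy argument applied to the $v$-equation with $\nn u$ playing the role of $\nn w$ is in fact the correct (and the paper's) main route, but as you set things up it is unavailable to you, since you disclaim exactly the $\tr u$ estimate it requires.
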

\begin{proof}
We shall break this proof into three steps and use $C_i$ to denote the general positive constants that may depend on $\chi,\xi,\ma$ and the initial data.

{\it Step 1: The uniform boundedness of the solution in $(0,T_m)$.} From Lemma \ref{l3.4} and Lemma \ref{l3.5}, for any $\chi,\xi>0$, there exists $K=K(\ma)>0$ such that
\bes
\|u(\cdot,t)\|_\yy\le \|u_0\|_\yy+K\chi AG_0,\ \ t\in(0,T),\label{3.17}
\ees
and
\bes
\|v(\cdot,t)\|_\yy\le\|v_0\|_\yy+K\xi B\|u_0\|_{W^1_{2p}(\oo)}\kk[(\chi G_0)^{2(p+1)}+(\chi H_0)^{p+1}+1\rr]^{1/2p},\ \ t\in(0,T).\label{3.18}
\ees
If $(u_0,v_0,w_0)$ and $\chi,\xi$ satisfy
\bess
\chi\le\df{1}{4KG_0}\ \ {\rm and}\ \ \xi\le\df{1}{4K\|u_0\|_{W^1_{2p}(\oo)}\kk[(\chi G_0)^{2(p+1)}+(\chi H_0)^{p+1}+1\rr]^{1/2p}},
\eess
then, according to \eqref{3.17} and \eqref{3.18}, we find
\bess
\|u(\cdot,t)\|_\yy\le {3\|u_0\|_\yy}/{2}<A,\ \ \|v(\cdot,t)\|_\yy\le {3\|v_0\|_\yy}/{2}<B,\ \ t\in(0,T).
\eess
Due to the continuity of $u$ and $v$, there holds $T=T_m$ and
\bes
\|u(\cdot,t)\|_\yy\le A,\ \ \|v(\cdot,t)\|_\yy\le B,\ \ t\in(0,T_m).\label{3.18a}
\ees

{\it Step 2: The uniform $L^{2p}$-boundedness of $\nn v$ in $(0,T_m)$.}
Thanks to Lemmas \ref{l3.1} and \ref{l3.3} with $T$ replaced by $T_m$, there exists $C_1>0$ such that
\bes
\|\nn w(\cdot,t)\|_{2(p+1)}\le C_1,\ \ t\in(0,T_m),\label{3.19}
\ees
and
\bes
\|\nn u(\cdot,t)\|_{2p}\le C_1,\ \ t\in(0,T_m);\ \ \int_t^{t+\delta}\ii|\nn u|^{2(p+1)}\dx\ds\le C_1,\ \ t\in(0,T_m-\delta),\label{3.20}
\ees
where $\delta=\min\{1,T_m/2\}$. Moreover, from Lemma \ref{l3.2} with $T$ replaced by $T_m$, we find $C_2>0$ such that
\bes
\int_t^{t+\delta}\ii|\tr w|^{p+1}\dx\ds\le C_2,\ \ t\in(0,T_m-\delta).\label{3.21}
\ees

It is easy to see that $u$ satisfies
\bess
 \left\{\begin{array}{lll}
 u_t=\tr u+F(x,t),&x\in\oo,\ \ t\in(0,T_m),\\[1mm]
 \pl_\nu u=0,\ \ &x\in\pl\oo, \ \ t\in(0,T_m),\\[1mm]
 u(x,0)=u_0, &x\in\oo,
 \end{array}\right.
\eess
where $F(x,t)=\nn u\cdot\nn w+u\tr w$. In view of Young's inequality and the first inequality in \eqref{3.18a}, \eqref{3.19}, the second inequality in \eqref{3.20} and \eqref{3.21}, there is $C_3>0$ fulfilling
\bess
\int_t^{t+\delta}\ii |F(x,s)|^{p+1}\dx\ds\le C_3,\ \ t\in(0,T_m-\delta).
\eess
Hence, similar to the derivation of Lemma \ref{l3.2}, one can show that, there exists $C_4>0$ such that
\bes
\int_t^{t+\delta}\ii|\tr u|^{p+1}\dx\ds\le C_4,\ \ t\in(0,T_m-\delta).\label{3.22}
\ees

In view of the $L^\yy$ boundedness of $v$ in \eqref{3.18a}, following the arguments in the proof of Lemma \ref{l3.3} (cf. \eqref{3.16}), for some positive constant $C_5$, there holds
\bess
\dv\ii|\nn v|^{2p}+\ii|\nn v|^{2p}\le C_5\kk(\ii|\nn u|^{2(p+1)}+\ii|\tr u|^{p+1}+1\rr),
\eess
and then by using the second inequality in \eqref{3.20} and \eqref{3.22} and Lemma \ref{l2.2}, we have
\bes
\ii|\nn v(\cdot,t)|^{2p}\le C_6,\ \ t\in(0,T_m) \label{3.23}
\ees
for some $C_6>0$.

{\it Step 3: The global existence and boundedness of the solution.}
From \eqref{2.1}, \eqref{3.18a}, \eqref{3.19}, the first inequality in \eqref{3.20} and \eqref{3.23}, there is $C_7>0$ such that
\bess
\|u(\cdot,t)\|_{W^1_{2p}(\oo)}+\|v(\cdot,t)\|_{W^1_{2p}(\oo)}+\|w(\cdot,t)\|_{W^1_{2p}(\oo)}\le C_7,\ \ t\in(0,T_m).
\eess
This enables us to deduce that $T_m=\yy$ due to $2p>n$ and Lemma \ref{l2.1}. Moreover, \eqref{3.24} follows directly. This completes the proof.
\end{proof}

\begin{proof}[Proof of Theorems \ref{t1.1} and \ref{t1.1a}] Recalling the definitions of $G_0,H_0$ and taking
\[\chi_0=\df{\kappa}{G_0},\ \ {\rm and}\ \ \xi_0=\df{\kappa}{\|u_0\|_{W^1_{2p}(\oo)}\kk[(\chi G_0)^{2(p+1)}+(\chi H_0)^{p+1}+1\rr]^{1/2p}}\]
where $\kp=\kp(\ma)>0$ was given in Lemma \ref{l3.6}. On the one hand, for the fixed $\chi,\xi$, the small values of $\|w_0\|_{W^1_{2(p+1)}(\oo)}+r_*$ and $\|u_0\|_{W^1_{2p}(\oo)}$ ensure \eqref{3.23a}, then Theorem \ref{t1.1} follows from Lemma \ref{l3.6} directly. One the other hand, for the given initial data $(u_0,v_0,w_0)$, if $\chi\le\chi_0$ and $\xi\le\xi_0$, then we can show Theorem \ref{t1.1a} from Lemma \ref{l3.6}.
\end{proof}

\section{Proof of Theorem \ref{t1.2}}
  \setcounter{equation}{0} {\setlength\arraycolsep{2pt}
Define
\bess
\tf:=\sup\kk\{T\in(0,T_m):\,\|v(\cdot,t)\|_\yy\le B\ {\rm for\ all}\ t\in(0,T)\rr\}.
\eess
Evidently, by the continuity of the solution, we have $\tf\in(0,T_m]$ and
 \bes
\|v(\cdot,t)\|_\yy\le B,\ \ t\in(0,\tf).\label{4.1}
 \ees
For the later use, we denote $\te=\min\{1,\tf/2\}$ and
\[\mb=\{\lm,\mu,r_*,\eta,\chi,\oo,\|u_0\|_{W^1_4(\oo)},\|v_0\|_\yy,\|w_0\|_{W^2_3(\oo)}\}.\]
We first assert the uniform $L^\yy$ boundednss of $u$ provided \eqref{4.1}.
\begin{lem}\label{l4.1}
Let $n=2,\eta_1>0,m\ge2$ and $\eta_2=0$. Then there is $C=C(\mb)>0$ such that, the solution of \eqref{1.1} satisfies
\bes
\int_t^{t+\te}\ii|\nn u|^2\dx\ds\le C,\ \ t\in(0,\tf-\te).\label{4.7}
\ees
\end{lem}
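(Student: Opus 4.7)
The plan is to derive the spacetime $L^2$ estimate on $\nabla u$ by testing the $u$-equation with $u$ and combining the super-logistic dissipation $\eta_1\|u\|_{m+1}^{m+1}$ with additional regularity of $w$ coming from maximal Sobolev estimates.

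I would first set up the baseline bounds. Integrating the $u$-equation over $\oo$ (the diffusion and the taxis divergence vanish by the Neumann condition) gives $\frac{d}{dt}\int_\oo u=\eta_1\int_\oo u-\eta_1\int_\oo u^m$; since $\int_\oo u^m\ge|\oo|^{1-m}(\int_\oo u)^m$ by H\"older, a sub-Bernoulli argument produces $\|u(\cdot,t)\|_1\le C$ on $(0,\ty)$ and, by time integration, $\int_t^{t+\te}\int_\oo u^m\le C$; in particular $\int_t^{t+\te}\int_\oo u^2\le C$ as $m\ge 2$. The $L^\infty$ bounds on $v$ (from the definition of $\tf$), $w$, and $r$ then imply $\|F(\cdot,s)\|_2\le C(1+\|u(\cdot,s)\|_2)$ for the source $F=-\lm(u+v)w-\mu w+r$ of the $w$-equation, and the argument in Lemma \ref{l3.2} yields $\int_t^{t+\te}\|\tr w\|_2^2\,ds\le C$. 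Moreover, testing the $w$-equation with $-\tr w$ and applying Lemma \ref{l2.2} to the resulting differential inequality $\frac{d}{dt}\|\nn w\|_2^2+\|\tr w\|_2^2\le C(1+\|u\|_2^2)$ produces a pointwise-in-time bound $\|\nn w(\cdot,t)\|_2\le C$.

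The central step is to multiply the $u$-equation by $u$ and integrate, which after integration by parts gives
\[
\tfrac12\tfrac{d}{dt}\|u\|_2^2+\|\nn u\|_2^2+\eta_1\|u\|_{m+1}^{m+1}=-\tfrac{\chi}{2}\int_\oo u^2\tr w\,dx+\eta_1\|u\|_2^2.
\]
I would estimate the taxis term by H\"older, $|\int u^2\tr w|\le\|u\|_4^2\|\tr w\|_2$, together with the 2D Gagliardo-Nirenberg inequality $\|u\|_4^2\le C\|\nn u\|_2^{(3-m)/2}\|u\|_{m+1}^{(m+1)/2}$ (valid for $2\le m\le 3$; a two-factor variant with the $L^1$ correction handles $m>3$). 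A three-factor Young inequality absorbs the $\|\nn u\|_2^2$ and $\|u\|_{m+1}^{m+1}$ factors into the dissipation on the left; combined with $\eta_1\|u\|_2^2\le\tfrac{\eta_1}{2}\|u\|_{m+1}^{m+1}+C$ (Young, $m\ge 2$), this yields
\[
\tfrac{d}{dt}\|u\|_2^2+\|\nn u\|_2^2+\tfrac{\eta_1}{2}\|u\|_{m+1}^{m+1}\le C\bigl(1+\|\tr w\|_2^{q}\bigr),\qquad q=\tfrac{4}{m-1}.
\]

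The final task is to integrate this over $[t,t+\te]$ and absorb the boundary terms. For $m\ge 3$ one has $q\le 2$, so $\int_t^{t+\te}\|\tr w\|_2^q\,ds\le C$ follows directly from the $L^2$ spacetime estimate on $\tr w$ already obtained; a pointwise bound on $\|u(\cdot,t)\|_2$, via Lemma \ref{l2.2} applied to the same inequality, then gives \eqref{4.7}. The borderline case $m=2$ (where $q=4$) is the main obstacle: here one would exploit the 2D Gagliardo-Nirenberg inequality $\|u\|_3^3\ge C\|u\|_2^4$ (using the $L^1$ bound) to convert the super-logistic dissipation into a quartic damping in the ODE for $y=\|u\|_2^2$, producing via Lemma \ref{l2.2} a pointwise bound on $\|u\|_2$; a bootstrap through maximal $L^4$ Sobolev regularity for the $w$-equation then bounds $\int_t^{t+\te}\|\tr w\|_2^4\,ds$ and closes the estimate. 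The principal difficulty is precisely this coupling between the $u$- and $w$-regularity in the borderline case.
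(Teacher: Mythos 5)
Your baseline estimates coincide with the paper's: the $L^1$ bound from integrating the $u$-equation, the space-time $L^m$ and hence $L^2$ bounds on $u$, and the space-time $L^2$ bound on $\tr w$ obtained by feeding $\|f\|_2\le C(1+\|u\|_2)$ into the Lemma \ref{l3.2} machinery are all exactly what the paper does, and testing the $u$-equation with $u$ is the right starting point. The gap is in how you close. Absorbing $\|u\|_4^2\|\tr w\|_2$ into the dissipation terms $\|\nn u\|_2^2$ and $\eta_1\|u\|_{m+1}^{m+1}$ by three-factor Young forces the exponent $q=4/(m-1)$ on $\|\tr w\|_2$, and $q\le 2$ only when $m\ge3$. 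You flag only $m=2$ as borderline, but the entire range $2\le m<3$ (which the lemma must cover, since the hypothesis is just $m\ge2$) has $q>2$, while only $\int_t^{t+\te}\|\tr w\|_2^2\,\ds$ is available. Moreover, your sketched repair for $m=2$ is circular: Lemma \ref{l2.2} needs $\int_t^{t+\te}\|\tr w\|_2^4\,\ds$ bounded to produce the pointwise bound on $\|u\|_2$, while the proposed maximal $L^4_t L^2_x$ regularity estimate for $w$ needs $\int_t^{t+\te}\|u\|_2^4\,\ds$ --- essentially the bound you are trying to prove; absorbing one into the other would require a smallness relation between the maximal-regularity constant, $\chi$, $\lm$, $Q$ and $\eta_1$ that Lemma \ref{l4.1} does not assume (no parameter is small here; smallness of $\xi$ enters only later). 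Nor does the quadratic damping $-cy^2$ alone rescue the ODE: with only $\int_t^{t+\te}\|\tr w\|_2^2$ controlled, a source $h=\|\tr w\|_2^4$ of height $M$ on an interval of length $M^{-1/2}$ is admissible and drives $y$ to order $M^{1/2}$, so no uniform pointwise bound follows.

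The paper avoids all of this by \emph{not} using the superlinear dissipation in the $L^2$ test: it simply drops $-\eta_1\ii u^{m+1}$, estimates $\|u\|_4^2\le C(\|\nn u\|_2\|u\|_2+\|u\|_1^2)$ by the 2D Gagliardo--Nirenberg inequality together with the $L^1$ bound, and arrives at
\[
z'(t)+\ii|\nn u|^2\le C\,z(t)\,h(t),\qquad z(t)=\ii u^2+1,\quad h(t)=\ii|\tr w|^2+1,
\]
where the right-hand side is linear in $z$ and $h$ is merely window-integrable. The closing device is then a windowed Gronwall argument rather than Lemma \ref{l2.2}: the space-time $L^2$ bound on $u$ (this is the only place where the logistic structure with $m\ge2$ enters) furnishes in each window $[t-\te,t]$ a time $t_0$ with $z(t_0)\le C$, whence $z(t)\le z(t_0)\exp\bigl(C\int_{t_0}^{t}h(s)\,\ds\bigr)$ is uniformly bounded; integrating the differential inequality over $(t,t+\te)$ then yields \eqref{4.7}. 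This treats all $m\ge2$ uniformly, with no case distinction and no regularity of $\tr w$ beyond the space-time $L^2$ estimate. If you want to salvage your route, redirect the $u$-weight in the Young step onto $\|u\|_2$, so that it multiplies $h$ instead of inflating the exponent on $\|\tr w\|_2$ --- which is precisely the paper's choice.
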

\begin{proof} In the proof, we use $C_i$ to denote the positive constants which may depend on $\mb$.
Integrating the first equation in \eqref{1.1} upon $\oo$ yields
\bes
\dv\ii u=\eta_1\ii u-\eta_1\ii u^m,\ \ t\in(0,T_m),\label{4.3}
\ees
which implies
\bes
\ii u\le \max\{\|u_0\|_1,|\oo|\}=:C_1,\ \ t\in(0,T_m).\label{4.3a}
\ees
Integrating \eqref{4.3} from $t$ to $t+\te$ with $t\in(0,T_m-\te)$, and using the $L^1$ boundedness of $u$, we find
\bes
\int_t^{t+\te}\ii u^m\dx\ds\le C_1+\df{C_1}{\eta_1},\ \ t\in(0,T_m-\te).\label{4.4a}
\ees
Due to $m\ge2$, by use of Young's inequality, there is $C_2>0$ such that
\bes
\int_t^{t+\te}\ii u^2\dx\ds\le C_2,\ \ t\in(0,T_m-\te).\label{4.4}
\ees

Let $f(x,t)=-\lm(u+v)w-\mu w+r$. It follows from \eqref{4.1} and \eqref{4.4} that, there is $C_3>0$ such that
\[\int_t^{t+\te}\ii f^2\dx\ds\le C_3,\ \ t\in(0,\tf-\te).\]
As in the proof of Lemma \ref{l3.2}, one can show that, there exists $C_4>0$ such that
\bes
\int_t^{t+\te}\ii |\tr w|^2\dx\ds\le C_4,\ \ t\in(0,\tf-\te).\label{4.5}
\ees

Testing the $u$-equation in \eqref{1.1} by $u$, there holds
\bes
\df12\ii u^2+\ii|\nn u|^2&\le&\chi\ii u\nn u\cdot\nn w+\eta_1\ii u^2\nm\\[0.5mm]
&=&\df{\chi}2\ii \nn u^2\cdot\nn w+\eta_1\ii u^2\nm\\[0.5mm]
&=&-\df{\chi}2\ii u^2\tr w+\eta_1\ii u^2\nm\\[0.5mm]
&\le&\df{\chi}2\|u\|_4^2\|\tr w\|_2+\eta_1\ii u^2,\ \ t\in(0,T_m).\nm
\ees
In view of the Gagliardo-Nirenberg inequality (\cite{T-W2012}) and \eqref{4.3a}, there is $C_5>0$ such that
\bess
 \|u\|_4^2\le C_5(\|\nn u\|_2\|u\|_2+ \|u\|_1^2)\le C_5(\|\nn u\|_2\|u\|_2+ C_1^2),\ \ t\in(0,T_m).
\eess
Hence, for some $C_6>0$,
\bes
\dv\ii u^2+2\ii|\nn u|^2&\le&\chi C_5(\|\nn u\|_2\|u\|_2+ C_1^2)\|\tr w\|_2+2\eta_1\ii u^2\nm\\[0.5mm]
&\le& \|\nn u\|^2_2+C_6\big(\|u\|^2_2\|\tr w\|_2^2+\|u\|^2_2+\|\tr w\|_2^2+1\big),\ \ t\in(0,T_m).
\ees
By rearrangement, we have
 \bes
 z'(t)+\ii |\nn u|^2\le C_6\kk(\ii u^2dx+1\rr)\kk(\ii|\tr w|^2dx+1\rr)
 :=C_6z(t)h(t),\ \ t\in(0,\hat T),\label{4.6}
 \ees
where
 \[z(t)=\ii |u(\cdot,t)|^2+1, \ \ h(t)=\ii|\tr w(\cdot,t)|^2+1.\]
For any $0\le \tilde t\le t<\tf$, it follows from \eqref{4.6} that
\bes
z(t)\le z(\tilde t)e^{C_6\int_{\tilde t}^t h(s)\ds}.\label{4.6a}
\ees
Clearly, for $t\in[0,\te]$, from \eqref{4.6a} and \eqref{4.5} we have
\bes
z(t)\le z(0)e^{C_6\int_0^t h(s)\ds}\le (\|u_0\|_2^2+1)e^{C_6(C_4+1)}\label{4.8}
\ees

Fix $t\in(\te,\tf)$. Due to \eqref{4.4}, there exists $t_0\in[t-\te,t]$ such that
$z(t_0)=\ii u^2(\cdot,t_0)+1\le C_2+1$. Thanks to \eqref{4.5} we have
\[ \int_{t_0}^t h(s)\ds=\int_{t_0}^t\kk(\ii|\tr w|^2\dx+1\rr)\ds
\le\int_{t-\te}^t\kk(\ii|\tr w|^2\dx+1\rr)\ds\le C_4+1.\]
Again by \eqref{4.6a},
\bess
z(t)\le z(t_0)e^{C_6\int_{t_0}^t h(s)ds}\le (C_2+1)e^{C_6(C_4+1)}, \ \ \te<t<\tf.
\eess
This combined with \eqref{4.8} implies that $z(t)=\ii |u(\cdot,t)|^2+1\le C_7$ in $(0,\tf)$ with $C_7>0$. Moreover, it follows from \eqref{4.6} that
\bess
\dv\ii u^2+\ii|\nn u|^2\le C_6C_7\kk(\ii|\tr w|^2+1\rr),\ \ t\in(0,\tf),
\eess
which by an integration upon $(t,t+\te)$ for $t\in(0,\tf-\te)$ implies that, there is $C_8>0$ such that
\bess
\int_t^{t+\te}\ii|\nn u|^2\dx\ds\le C_8,\ \ t\in(0,\tf-\te).
\eess
This implies \eqref{4.7} and the proof is end.
\end{proof}

\begin{lem}\label{l4.2}
Suppose that $n=2$, $\eta_1>0,m\ge2$ and $\eta_2=0$. Then one can find $C=C(\mb)>0$ such that
\bes
\ii|\nn w(\cdot,t)|^4\le C,\ \ t\in(0,\tf).\label{4.12}
\ees
\end{lem}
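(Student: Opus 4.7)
The plan is to bound $\nn w$ in $L^4(\oo)$ via a Duhamel-type argument: write the $w$-equation as $w_t = \tr w - \mu w + \tilde f$ with source $\tilde f := -\lm(u+v)w + r$, establish a uniform-in-time $L^2(\oo)$ bound on $\tilde f$, and then read off the desired regularity from the gradient smoothing estimates of the Neumann heat semigroup.

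For the uniform $L^2$ control of the source, I rely on the fact that the pointwise bounds $\|w(\cdot,t)\|_\yy \le Q$ (Lemma \ref{l2.1}) and $\|v(\cdot,t)\|_\yy \le B$ (see \eqref{4.1}) are available on $(0,\tf)$, together with a uniform-in-time $L^2(\oo)$ bound on $u$. Crucially, such a bound is already implicit in the proof of Lemma \ref{l4.1}, which established $z(t) = \ii u^2 + 1 \le C_7$ on $(0,\tf)$. Combining these bounds together with $\|r\|_\yy \le r_*$ yields $\sup_{t\in(0,\tf)} \|\tilde f(\cdot,t)\|_2 \le C_1(\mb)$.

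With this in hand, the variation-of-constants formula
\[\nn w(\cdot,t) = \nn e^{t(\tr - \mu)} w_0 + \int_0^t \nn e^{(t-s)(\tr - \mu)} \tilde f(\cdot,s)\, \ds\]
and the standard $L^p$-$L^q$ gradient smoothing estimate for the Neumann heat semigroup (cf.\ \cite[Lemma 1.3]{M.W.2010} or \cite[Lemma 3.3]{Fujie-I-W}, as already used in Lemmas \ref{l3.1} and \ref{l3.4}) give
\[\|\nn e^{\tau(\tr - \mu)} g\|_4 \le C_2\bigl(1+\tau^{-3/4}\bigr) e^{-\lm_1 \tau} \|g\|_2,\]
where the exponent $-\tfrac12 - \tfrac{n}{2}(\tfrac12 - \tfrac14) = -\tfrac34$ arises from choosing $n=2$, $q=2$, $p=4$. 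Since $\tau \mapsto (1+\tau^{-3/4}) e^{-\lm_1\tau}$ is integrable on $(0,\yy)$, convolving against the uniformly $L^2$-bounded $\tilde f$ yields $\|\nn w(\cdot,t)\|_4 \le C_3(\mb)$ for $t \in (0,\tf)$, which is exactly \eqref{4.12}.

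I do not anticipate a substantial technical obstacle. The only point worth double-checking is that the proof of Lemma \ref{l4.1} indeed supplies a pointwise-in-time (rather than merely space-time) $L^2$-bound on $u$; the argument through the auxiliary function $z(t)$ does deliver this. An alternative, more hands-on energy approach --- computing $\tfrac{d}{dt}\ii |\nn w|^4$ and absorbing the resulting contributions via \eqref{4.5} and Lemma \ref{l4.1} --- would also be feasible but is noticeably less transparent in two dimensions.
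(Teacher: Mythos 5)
Your proof is correct, but it takes a genuinely different route from the paper. The paper proves Lemma \ref{l4.2} by an energy method: it differentiates $\ii|\nn w|^4$, splits the result into four integrals $I_1,\dots,I_4$, controls them using the pointwise inequality \eqref{4.9a} from \cite{Lankeit-Wang}, a boundary-trace estimate and Young's inequality, arriving at
\[
\df14\dv\ii|\nn w|^4+\mu\ii|\nn w|^4\le \tk\lm^2Q^2\ii|\nn u|^2+2\ii|\tr w|^2+C_4,\ \ t\in(0,\tf),
\]
and then closes via the space-time bounds \eqref{4.7} and \eqref{4.5} together with the ODE comparison Lemma \ref{l2.2} --- i.e.\ exactly the ``more hands-on'' alternative you dismiss in your last sentence. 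Your Duhamel argument is sound as stated: the bound $z(t)=\ii u^2+1\le C_7$ on $(0,\tf)$ is indeed established pointwise in time inside the proof of Lemma \ref{l4.1} (not merely as a space-time average), there is no circularity since that proof nowhere uses Lemma \ref{l4.2}, the exponent $3/4<1$ makes the singular kernel integrable against $e^{-\lm_1\tau}$, and the initial term is harmless because $w_0\in W^2_\yy(\oo)$ (indeed $W^2_3(\oo)\hk W^1_4(\oo)$ in two dimensions) gives $\nn w_0\in L^4(\oo)$. Your route is shorter and avoids the testing and boundary-term manipulations. What the paper's computation buys in exchange is robustness under weaker information on the source: the identical energy scheme is recycled almost verbatim in Lemma \ref{l5.3}, where $v$ is not yet known to be bounded (only $\ii v\le C$ and a windowed space-time $L^l$ bound are available at that stage), so the uniform-in-time $L^2$ control of $\tilde f=-\lm(u+v)w+r$ on which your convolution rests is simply unavailable there; your argument would at least need reworking with mixed space-time norms, whereas the differential inequality feeds directly into Lemma \ref{l2.2}, which is tailored to windowed integrals of the forcing. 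So your proof is a valid and arguably cleaner derivation of \eqref{4.12} itself, but the paper's choice of method is explained by its reuse in Section 5.
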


\begin{proof}
By direct calculations (cf. \eqref{3.7} with $p=2$), there holds
\bes
&&\df14\dv\ii|\nn w|^4+\mu\ii|\nn w|^4\nm\\[0.5mm]
&=&\ii|\nn w|^2\nn u\cdot\nn w_t+\mu\ii|\nn w|^4\nm\\[0.5mm]
&=&\ii|\nn w|^2\nn w\cdot\nn(\tr w-\lm(u+v)w-\mu w+r)+\mu\ii|\nn w|^4\nm\\[0.5mm]
&=&\ii|\nn w|^2\nn w\cdot\nn\tr w-\lm\ii|\nn w|^2\nn w\cdot\nn(uw+vw)+\ii|\nn w|^2\nn w\cdot\nn r\nm\\[0.5mm]
&\le&\ii|\nn w|^2\nn w\cdot\nn\tr w-\lm\ii w|\nn w|^2\nn u\cdot\nn w-\lm\ii w|\nn w|^2\nn v\cdot\nn w+\ii|\nn w|^2\nn w\cdot\nn r\nm\\[0.5mm]
&=:&I_1(t)+I_2(t)+I_3(t)+I_4(t),\ \ t\in(0,T_m).\label{4.9}
\ees
In view of \cite[Lemma 2.2]{Lankeit-Wang}, we have
\bes
\ii|\nn w|^6\le 36Q^2\ii|\nn w|^2|D^2w|^2=:\tk\ii|\nn w|^2|D^2w|^2,\ \ t\in(0,T_m).\label{4.9a}
\ees

Making use of \cite[Lemma 2.6 (ii)]{WjpW-CAMWA}, there is $C_1>0$ depending on $n,p,\oo$ such that
\bess
\int_{\pl\oo}|\nn w|^2\pl_\nu|\nn w|^2{\rm d}S\le\df12\ii\big|\nn|\nn w|^2\big|^2+C_1\ii|\nn w|^4,\ \ t\in(0,T_m).
\eess
Hence, similar to the derivation of \eqref{3.8} with $p=2$, we get
\bes
I_1(t)&=&\df12\ii|\nn w|^2\tr|\nn w|^2-\ii|\nn w|^2|D^2w|^2\nm\\[0.5mm]
&=&-\df12\ii\kk|\nn|\nn w|^2\rr|^2+\df12\int_{\partial\oo}|\nn w|^2\partial_\nu|\nn w|^2{\rm d}S-\ii|\nn w|^2|D^2w|^2\nm\\[0.5mm]
&\le&-\ii|\nn w|^2|D^2w|^2-\df14\ii\kk|\nn|\nn w|^2\rr|^2+\df{C_1}{2}\ii|\nn w|^4,\ \ t\in(0,T_m).\nm
\ees
Thus, applying Young's inequality and \eqref{4.9a}, there holds
\bes
I_1(t)&\le&-\ii|\nn w|^2|D^2w|^2-\df14\ii\kk|\nn|\nn w|^2\rr|^2+\df{1}{4\tk}\ii|\nn w|^6+2\tk^2C_1^3|\oo|\nm\\[0.5mm]
&\le&-\df34\ii|\nn w|^2|D^2w|^2-\df14\ii\kk|\nn|\nn w|^2\rr|^2+2\tk^2C_1^3|\oo|,\ \ t\in(0,T_m).\label{4.9b}
\ees
By Young's inequality and \eqref{4.9a}, the second term in \eqref{4.9} can be estimated as:
\bes
I_2(t)&\le&\lm Q\ii|\nn w|^3|\nn u|\le \df{1}{4\tk}\ii |\nn w|^6+\tk\lm^2Q^2\ii|\nn u|^2\nm\\[0.5mm]
&\le&\df14\ii|\nn w|^2|D^2w|^2+\tk\lm^2Q^2\ii|\nn u|^2,\ \ t\in(0,T_m),\label{4.10}
\ees
and, the third term in \eqref{4.9} can be estimated as:
\bes
I_3(t)&=&\lm\ii v\nn\cdot(w|\nn w|^2\nn w)\nm\\[0.5mm]
&=&\lm\ii v|\nn w|^4+\lm\ii vw\nn|\nn w|^2\cdot\nn w+\lm\ii vw|\nn w|^2\tr w\nm\\[0.5mm]
&\le&\lm B\ii|\nn w|^4+\lm BQ\ii\kk|\nn|\nn w|^2\rr||\nn w|+\lm BQ\ii|\nn w|^2|\tr w|\nm\\[0.5mm]
&\le&\df18\ii\kk|\nn|\nn w|^2\rr|^2+\df1{4\tk}\ii|\nn w|^6+\ii|\tr w|^2+C_2\nm\\[0.5mm]
&\le&\df18\ii\kk|\nn|\nn w|^2\rr|^2+\df14\ii|\nn w|^2|D^2w|^2+\ii|\tr w|^2+C_2,\ \ t\in(0,\tf),\label{4.11}
\ees
where $C_2=C_2(\mb)>0$ and we have used \eqref{4.1} in the derivation of \eqref{4.11}. For the last term $I_4$, again by the Young inequality and \eqref{4.9a}, we find
\bes
I_4(t)&=&-\ii r\nn w\cdot\nn|\nn w|^2-\ii r|\nn w|^2\tr w\nm\\[0.5mm]
&\le&\df18\ii\kk|\nn|\nn w|^2\rr|^2+2r_*^2\ii |\nn w|^2+\ii|\tr w|^2+\df{r_*^2}{4}\ii |\nn w|^4\nm\\[0.5mm]
&\le&\df18\ii\kk|\nn|\nn w|^2\rr|^2+\ii|\tr w|^2+\df1{4\tk}\ii|\nn w|^6+C_3\nm\\[0.5mm]
&\le&\df18\ii\kk|\nn|\nn w|^2\rr|^2+\ii|\tr w|^2+\df14\ii|\nn w|^2|D^2w|^2+C_3,\ \ t\in(0,T_m)\label{4.11a}
\ees
for some $C_3=C_3(\mb)>0$. Plugging \eqref{4.9b}-\eqref{4.11a} into \eqref{4.9}, one can find $C_4=C_4(\mb)>0$ such that
\bess
\df14\dv\ii|\nn w|^4+\mu\ii|\nn w|^4\le \tk\lm^2Q^2\ii|\nn u|^2+2\ii|\tr w|^2+C_4,\ \ t\in(0,\tf).
\eess
In view of \eqref{4.7},\eqref{4.5} and Lemma \ref{l2.2}, we get \eqref{4.12}. This completes the proof.
\end{proof}

Now Lemma \ref{l4.2} enables us to establish the uniform $L^\yy$ boundedness of $u$.

\begin{lem}\label{l4.3} Let $n=2$, $\eta_1>0,m\ge2$ and $\eta_2=0$. Then there exists $C=C(\mb)>0$ such that
\bes
\|u(\cdot,t)\|_\yy\le C,\ \ t\in(0,\tf).\label{4.13}
\ees
\end{lem}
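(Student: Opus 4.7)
The plan is to prove the uniform $L^\yy$ bound for $u$ by a two-step bootstrap: first extract uniform-in-time $L^p$ bounds for every $p\ge 2$ using the super-logistic dissipation, then promote them to $L^\yy$ through the variation-of-constants formula with a shifted Neumann heat semigroup.

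\emph{Step 1: Uniform $L^p$ bounds for every $p\ge 2$.} First I would test the first equation of \eqref{1.1} by $u^{p-1}$ and integrate over $\oo$ to obtain
\bess
\df{1}{p}\dv\ii u^p+(p-1)\ii u^{p-2}|\nn u|^2+\eta_1\ii u^{p+m-1}
=\chi(p-1)\ii u^{p-1}\nn u\cd\nn w+\eta_1\ii u^p.
\eess
The chemotaxis integral is then controlled by H\"older, the two-dimensional Gagliardo--Nirenberg inequality $\|u^{p/2}\|_4^2\le C(\|\nn u^{p/2}\|_2\|u^{p/2}\|_2+\|u^{p/2}\|_2^2)$, Young's inequality, and the uniform bound $\|\nn w\|_4\le C$ from Lemma \ref{l4.2}. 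After absorbing a small multiple of $\ii u^{p-2}|\nn u|^2$ into the dissipation on the left, there remains a tail of the form $C\ii u^p$ on the right. Since $m\ge 2$ gives $\ii u^{p+m-1}\ge\ii u^{p+1}$, H\"older's inequality produces $\kk(\ii u^p\rr)^{(p+1)/p}\le C\ii u^{p+1}$, so a fraction of $\eta_1\ii u^{p+m-1}$ absorbs $C\ii u^p$ via Young, leading to the autonomous ODI
\bess
\dv\ii u^p+c_1(p)\kk(\ii u^p\rr)^{(p+1)/p}\le c_2(p),\ \ t\in(0,\tf),
\eess
whose comparison argument yields $\ii u^p\le C_p$ uniformly on $(0,\tf)$ for every $p\ge 2$.

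\emph{Step 2: From $L^p$ to $L^\yy$.} Step 1 makes $\|u(\cd,t)\|_p$ and $\|u^m(\cd,t)\|_p=\|u(\cd,t)\|_{mp}^m$ uniformly bounded for every finite $p$, and $\|\nn w(\cd,t)\|_4\le C$ from Lemma \ref{l4.2} combines with H\"older to yield $\|u(\cd,t)\nn w(\cd,t)\|_q\le C$ for any $q\in(2,4)$. I would then rewrite the $u$-equation as
\bess
u_t-(\tr-\alpha)u=(\alpha+\eta_1)u-\eta_1 u^m-\chi\nn\cd(u\nn w)
\eess
with some fixed $\alpha>0$, and apply the variation-of-constants formula for the shifted Neumann heat semigroup $e^{t(\tr-\alpha)}$ together with the standard $L^p$--$L^\yy$ smoothing estimates to obtain
\bess
\|u(\cd,t)\|_\yy &\le& e^{-\alpha t}\|u_0\|_\yy \\
&&+\,C\int_0^t e^{-\alpha(t-s)}\kk(1+(t-s)^{-\frac{n}{2p}}\rr)\kk(\|u\|_p+\|u\|_{mp}^m\rr)\ds \\
&&+\,C\chi\int_0^t e^{-\alpha(t-s)}\kk(1+(t-s)^{-\frac12-\frac{n}{2q}}\rr)\|u\nn w\|_q\ds.
\eess
Choosing $p>n/2=1$ and $q>n=2$ makes every kernel singularity integrable on $(0,\yy)$ thanks to the factor $e^{-\alpha(t-s)}$, while every integrand factor is uniformly bounded in time, so the right-hand side is finite uniformly in $t\in(0,\tf)$, which proves \eqref{4.13}.

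The hardest part should be Step 1: the chemotaxis integral naturally produces the combination $\kk(\ii u^{p-2}|\nn u|^2\rr)^{1/2}\|u^{p/2}\|_4\|\nn w\|_4$, in which Gagliardo--Nirenberg interpolation mixes fractional powers of $X=\ii u^p$ and $Y=\ii u^{p-2}|\nn u|^2$; these have to be balanced simultaneously against the dissipation $Y$ and the super-logistic sink $\ii u^{p+m-1}$, while the $p$-dependence of the constants must be tracked carefully enough that the resulting ODI has a finite equilibrium and thus delivers a pointwise-in-time bound on $\ii u^p$.
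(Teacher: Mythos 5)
Your proposal is correct, but it takes a genuinely different route from the paper in its first half. The paper proves Lemma \ref{l4.3} in one shot: it sets $H(T)=\sup_{t\in(0,T)}\|u(\cdot,t)\|_\yy$, writes the variation-of-constants formula, disposes of the logistic source by the one-sided pointwise bound $(u-u^m)_+\le C$ together with order preservation of the heat semigroup, and estimates the taxis term by $\|u\nn w\|_3\le\|u\|_{12}\|\nn w\|_4$, where $\|u\|_{12}\le\|u\|_1^{1/12}\|u\|_\yy^{11/12}\le CH(T)^{11/12}$ uses only the $L^1$ bound \eqref{4.3a} and Lemma \ref{l4.2}; this closes via the self-referential inequality $H(T)\le C+CH(T)^{11/12}$ and Young's inequality. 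Your Step 1 replaces this sup-norm bootstrap by an energy-method iteration: testing with $u^{p-1}$ and exploiting the super-logistic absorption $\eta_1\ii u^{p+m-1}$ (with $m\ge2$, $\eta_1>0$) to obtain the autonomous ODI and hence uniform $L^p$ bounds for every finite $p$, after which your Step 2 is essentially the paper's semigroup argument with all densities already bounded in every $L^p$. What each approach buys: the paper's proof is shorter, needs no $p$-dependent constants, and — crucially — uses the logistic term only through the upper bound on $(u-u^m)_+$, which is why the authors can recycle it verbatim in Lemma \ref{l5.4} and why it parallels Lemma \ref{l3.4} where $\eta_1=0$; your proof leans on $\eta_1>0$ and $m\ge2$ in an essential way, but in exchange avoids the self-map inequality in $H(T)$ and yields uniform $L^p$ bounds for all $p$ as a by-product, which is robust information independent of the particular interpolation exponent $11/12$.

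Two small repairs you should make, both routine. First, the claim $\ii u^{p+m-1}\ge\ii u^{p+1}$ for $m\ge2$ is false pointwise where $u<1$; you need $\ii u^{p+1}\le\ii u^{p+m-1}+|\oo|$, which suffices for the same absorption and leaves the ODI $\frac{d}{dt}\ii u^p+c_1\kk(\ii u^p\rr)^{(p+1)/p}\le c_2$ intact. Second, in Step 2 make explicit that the $L^q$ bound on $u\nn w$ for $q\in(2,4)$ comes from H\"older with $\|u\|_{4q/(4-q)}$ (finite by Step 1) and $\|\nn w\|_4\le C$ from Lemma \ref{l4.2}, exactly as in the paper; with the shifted semigroup $e^{t(\tr-\alpha)}$ the kernel singularities $(t-s)^{-1/2-n/(2q)}$ and $(t-s)^{-n/(2p)}$ are indeed integrable against $e^{-\alpha(t-s)}$, so the conclusion \eqref{4.13} follows as you state.
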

\begin{proof}
Denote $H(T)=\sup_{t\in(0,T)}\|u(\cdot,t)\|_\yy<\yy$ for $T\in(0,\tf)$.
Making use of the standard $L^p$-$L^q$ estimates for $(e^{t\tr})_{t\ge0}$ (\cite[Lemma 3.3]{Fujie-I-W}), one can find $\lm_1,\,C_1>0$ depending on $\oo$ such that
\bes
\|u(\cdot,t)\|_\yy&\le& \|u_0\|_\yy+\chi\int_0^t\|e^{(t-s)\tr}\nn\cdot(u\nn w)\|_\yy\ds+\eta_1\int_0^t\|e^{(t-s)\tr}(u-u^m)\|_\yy\ds\nm\\[0.5mm]
&\le&\|u_0\|_\yy+\chi\int_0^t\|e^{(t-s)\tr}\nn\cdot(u\nn w)\|_\yy\ds+\eta_1\int_0^t\|e^{(t-s)\tr}(u-u^m)_+\|_\yy\ds\nm\\[0.5mm]
&\le&C_1+C_1\chi\int_0^t\kk(1+(t-s)^{-\frac56}\rr)e^{-\lm_1(t-s)}\|u\nn w\|_3\ds,\ \ t\in(0,T).\label{4.14}
\ees
By \eqref{4.12}, \eqref{4.3a} and the definition of $H(T)$, there exists $C_2=C_2(\mb)>0$ such that
\bess
\|u\nn w\|_3\le\|u\|_{12}\|\nn w\|_4&=&\kk(\ii u^{12}\rr)^{1/12}\|\nn w\|_4
\le C_2H(T)^{11/{12}},\ \ t\in(0,T).
\eess
Inserting this into \eqref{4.14} yields that, for some $C_3=C_3(\mb)>0$,
\bess
\|u(\cdot,t)\|_\yy&\le&C_1+C_1C_2\chi H(T)^{11/{12}}\int_0^t\kk(1+(t-s)^{-\frac56}\rr)e^{-\lm_1(t-s)}\ds,\nm\\[0.5mm]
&\le&C_3+C_3H(T)^{11/{12}},\ \ t\in(0,T),
\eess
which implies
\[H(T)\le C_3+C_3H(T)^{11/{12}},\ \ T\in(0,\tf).\]
Hence, thanks to the Young inequality, we have $H(T)\le C_3^{12}+12C_3$ for all $ T\in(0,\tf)$. This combined with the definition of $H(T)$ finishes the proof.
\end{proof}

In order to obtain the $L^\yy$ estimate of $v$ in $(0,\tf)$, we shall need the $L^4$ regularity of $\nn u$.
\begin{lem}\label{l4.4}
Assume that $n=2$, $\eta_1>0,m\ge2$ and $\eta_2=0$. Then there exists $C=C(\mb)>0$ such that
\bes
\|\nn u(\cdot,t)\|_4\le C,\ \ t\in(0,\tf);\ \ \int_t^{t+\te}\ii|\nn u|^6\dx\ds\le C,\ \ t\in(0,\tf-\te).\label{4.16}
\ees
\end{lem}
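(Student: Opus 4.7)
The plan is to mirror the approach of Lemma \ref{l3.3} with $p=2$, exploiting Remark \ref{r3.1}: since the $L^\infty$ bound on $u$ (Lemma \ref{l4.3}) and the $L^\infty$ bounds on $v$ (via \eqref{4.1}) and $w$ (via \eqref{2.1}) all hold on $(0,\tf)$, the differential inequality \eqref{3.16} with $p=2$ (together with the extra logistic contribution) is available, and the matter reduces to controlling the $L^6$-in-space and $L^3$-in-space norms of $\nabla w$ and $\Delta w$ on time-slabs of length $\theta$.

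First I would upgrade the space-time $L^2$ bound for $\Delta w$ in \eqref{4.5} to a space-time $L^3$ bound: this follows exactly as in Lemma \ref{l3.2} (with $p=2$), because the source $f=-\lambda(u+v)w-\mu w+r$ is now uniformly bounded on $\bar\Omega\times(0,\tf)$ thanks to Lemma \ref{l4.3}, \eqref{4.1}, \eqref{2.1}, and the assumption on $r$. Maximal Sobolev regularity applied to the $w$-equation (treating the initial datum separately on a short slab, and bootstrapping via an auxiliary heat-semigroup problem $\varphi_t=\Delta\varphi$ with $\varphi(\cdot,\sigma)=w(\cdot,\sigma)$ exactly as in the proof of Lemma \ref{l3.2}) then yields
\[
\int_t^{t+\theta}\!\!\ii |\Delta w|^3\,\dx\ds\le C,\qquad t\in(0,\tf-\theta).
\]

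Next I would produce the space-time $L^6$ bound for $\nabla w$. By the Gagliardo–Nirenberg inequality in two dimensions,
\[
\|\nabla w(\cdot,s)\|_6^6\le C\bigl(\|\Delta w(\cdot,s)\|_2^2\,\|\nabla w(\cdot,s)\|_4^4+\|\nabla w(\cdot,s)\|_4^6\bigr),
\]
so combining Lemma \ref{l4.2} (uniform $L^4$ bound on $\nabla w$) with \eqref{4.5} gives
\[
\int_t^{t+\theta}\!\!\ii |\nabla w|^6\,\dx\ds\le C,\qquad t\in(0,\tf-\theta).
\]

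With these two ingredients in hand, I would test the $u$-equation against $|\nabla u|^2\nabla u$ and repeat the computation of Lemma \ref{l3.3} with $p=2$. The only new contribution compared to Section 3 is
\[
\eta_1\ii |\nabla u|^2\nabla u\cdot\nabla(u-u^m)=\eta_1\ii |\nabla u|^4(1-mu^{m-1}),
\]
which is bounded by $\eta_1\ii|\nabla u|^4$ because $u\ge 0$, and can be absorbed as in the derivation of \eqref{3.16} using $\|u\|_\infty\le C$ from Lemma \ref{l4.3} together with Young's inequality and the pointwise identity \eqref{3.6a} with $p=2$. This yields
\[
\dv\ii|\nabla u|^4+\ii|\nabla u|^4+c\ii|\nabla u|^2|D^2u|^2\le C\Bigl(\ii|\nabla w|^6+\ii|\Delta w|^3+1\Bigr),\quad t\in(0,\tf),
\]
for suitable $c,C>0$. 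Feeding the two space-time bounds established above into Lemma \ref{l2.2} delivers the uniform $L^4$ estimate on $\nabla u$, and integrating the differential inequality on $(t,t+\theta)$ together with \eqref{3.6a} (with $p=2$) converts the bound on $\ii|\nabla u|^2|D^2u|^2$ into the desired space-time $L^6$ estimate on $\nabla u$, proving \eqref{4.16}.

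The main obstacle is the bootstrap to space-time $L^3$ integrability of $\Delta w$: here the $L^\infty$ bound on $u$ from Lemma \ref{l4.3} is crucial, since the space-time $L^2$ bound of \eqref{4.5} alone is not enough to close the $|\nabla u|^4$ energy identity against the $\Delta w$ terms $J_2,J_4$ appearing in Lemma \ref{l3.3}. Note that no smallness of $\xi$ is required at this stage, because the evolution of $u$ does not involve $\xi$; the role of $\xi$ small will enter only later when we estimate $\nabla v$ via the $\xi\nabla\cdot(v\nabla u)$ term.
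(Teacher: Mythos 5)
Your proposal is correct, and its core — the slab-wise $L^3$ bound for $\tr w$ via maximal Sobolev regularity (available now that $f=-\lm(u+v)w-\mu w+r$ is uniformly bounded by Lemma \ref{l4.3}, \eqref{4.1}, \eqref{2.1}), the energy inequality \eqref{4.17a} obtained by redoing Lemma \ref{l3.3} with $n=p=2$ (your treatment of the logistic term, $\eta_1\ii|\nn u|^4(1-mu^{m-1})\le\eta_1\ii|\nn u|^4$ for $u\ge0$, is exactly what Remark \ref{r3.1} anticipates), and the conclusion via Lemma \ref{l2.2} followed by a slab integration and \eqref{3.5a} — coincides with the paper's proof. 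The one genuine divergence is how you control $\nn w$ in $L^6$: the paper argues ``as in Lemma \ref{l3.1}'', i.e., via the semigroup $L^p$-$L^q$ estimates with the now-available $L^\yy$ bounds on $u,v,w$, and thereby gets the stronger uniform-in-time bound $\ii|\nn w(\cdot,t)|^6\le C$ on $(0,\tf)$, whereas you interpolate by Gagliardo--Nirenberg between the uniform $L^4$ bound of Lemma \ref{l4.2} and the space-time $L^2$ bound \eqref{4.5} for $\tr w$, which yields only the slab-wise bound $\int_t^{t+\te}\ii|\nn w|^6\dx\ds\le C$; since Lemma \ref{l2.2} requires nothing more than slab-wise integrability of the right-hand side of \eqref{4.17a}, this weaker information suffices, and your route is in fact closer in spirit to what the paper itself does later in Lemma \ref{l5.5}, where Gagliardo--Nirenberg (there between $\|\tr w\|_3$ and $\|w\|_\yy$) replaces the semigroup argument. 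One small technical point: your interpolation inequality should be stated with $\|D^2w\|_2$ rather than $\|\tr w\|_2$ on the right; under the homogeneous Neumann condition this is repaired by the standard elliptic estimate $\|w\|_{W^2_2(\oo)}\le C(\|\tr w\|_2+\|w\|_2)$ together with $0<w\le Q$, so the argument closes, but the caveat is worth recording.
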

\begin{proof}
Thanks to \eqref{4.1} and \eqref{4.13}, as in the proof of Lemmas \ref{l3.1} and \ref{l3.2}, there exists $C_1=C_1(\mb)>0$ such that
\bes
\ii |\nn w(\cdot,t)|^6\le C_1,\ \ t\in(0,\tf);\ \int_t^{t+\te}\ii|\tr w|^3\dx\ds\le C_1,\ \ t\in(0,\tf-\te).\label{4.17}
\ees
With \eqref{4.13} at hand, following the arguments in the proof of Lemma \ref{l3.3} with $n=p=2$, we obtain (cf. \eqref{3.16})
\bes
\dv\ii|\nn u|^4+\ii|\nn u|^4+\ii|\nn u|^2|D^2u|^2\le C_2\kk(\ii|\nn w|^6+\ii|\tr w|^3+1\rr),\ \ t\in(0,\tf)\qquad\label{4.17a}
\ees
for some $C_2=C_2(\mb)>0$. Then, using \eqref{4.17} and Lemma \ref{l2.2}, we get the first estimate in \eqref{4.16}.

Integrating \eqref{4.17a} from $t$ to $t+\te$ for $t\in(0,\tf-\te)$ firstly, and then applying the first inequality of \eqref{4.16} and \eqref{4.17} secondly, there is $C_3=C_3(\mb)>0$,
\[\int_t^{t+\te}\ii |\nn u|^2|D^2u|^2\dx\ds\le C_3,\ \ t\in(0,\tf-\te).\]
This in conjunction with \eqref{3.5a} with $n=p=2$ and \eqref{4.13} implies the second inequality of \eqref{4.16}.
\end{proof}

We are now in the position to get the $L^\yy$ estimate of $v$ in $(0,\tf)$.
\begin{lem}\label{l4.5}
Let $n=2$, $\eta_1>0,m\ge2$ and $\eta_2=0$. Then there is $C=C(\mb)>0$ such that
\bes
\|v(\cdot,t)\|_\yy\le \|v_0\|_\yy+C\xi,\ \ t\in(0,\tf).\label{4.18}
\ees
\end{lem}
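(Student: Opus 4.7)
The plan is to mirror the strategy of Lemma \ref{l3.5}, representing $v$ through the variation-of-constants formula with respect to the Neumann heat semigroup and then invoking the standard $L^p$–$L^q$ smoothing estimates. Since $\eta_2=0$, the $v$-equation is
\[
v_t=\tr v-\xi\nn\cdot(v\nn u) \qquad \text{in } \oo\times(0,\tf),
\]
so that
\[
v(\cdot,t)=e^{t\tr}v_0-\xi\int_0^t e^{(t-s)\tr}\nn\cdot(v\nn u)(\cdot,s)\,\ds.
\]

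First I would apply the estimate for the gradient form of the semigroup (as in \cite[Lemma 3.3]{Fujie-I-W}) to obtain
\[
\kk\|e^{(t-s)\tr}\nn\cdot(v\nn u)(\cdot,s)\rr\|_\yy
\le C_1\kk(1+(t-s)^{-\frac12-\frac{n}{2q}}\rr)e^{-\lm_1(t-s)}\|v\nn u\|_q
\]
with $n=2$ and $q=4$, so that $\frac12+\frac{n}{2q}=\frac34<1$, ensuring time-integrability of the singular kernel. Next, Hölder's inequality yields
\[
\|v(\cdot,s)\nn u(\cdot,s)\|_4\le\|v(\cdot,s)\|_\yy\,\|\nn u(\cdot,s)\|_4.
\]
By the very definition of $\tf$ we have $\|v(\cdot,s)\|_\yy\le B$ for $s\in(0,\tf)$, while Lemma \ref{l4.4} provides a constant $C_2=C_2(\mb)$ with $\|\nn u(\cdot,s)\|_4\le C_2$ on the same interval. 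Combining these,
\[
\|v(\cdot,s)\nn u(\cdot,s)\|_4\le BC_2,\qquad s\in(0,\tf).
\]

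Finally, inserting this bound into the Duhamel representation and using
\[
\int_0^\yy\kk(1+\sigma^{-3/4}\rr)e^{-\lm_1\sigma}\,{\rm d}\sigma<\yy,
\]
I would conclude
\[
\|v(\cdot,t)\|_\yy\le\|v_0\|_\yy+C_1BC_2\xi\int_0^\yy\kk(1+\sigma^{-3/4}\rr)e^{-\lm_1\sigma}\,{\rm d}\sigma
\le\|v_0\|_\yy+C\xi,\qquad t\in(0,\tf),
\]
where $C$ depends on $\mb$ (since $B=2\|v_0\|_\yy$ is controlled by $\mb$ and so is $C_2$). There is essentially no hidden obstacle: the argument is a direct adaptation of Lemma \ref{l3.5}, the only point requiring care is the choice $q=4$, which is dictated by Lemma \ref{l4.4} and happens to make the singular exponent $\tfrac34$ strictly less than one in dimension two, ensuring integrability of the kernel.
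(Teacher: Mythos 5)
Your proposal is correct and takes essentially the same route as the paper's own proof: both write $v$ via the variation-of-constants formula, apply the $L^p$--$L^q$ estimates of \cite[Lemma 3.3]{Fujie-I-W} with $q=4$ (so the singular exponent is $\tfrac34<1$ for $n=2$), and bound $\|v\nn u\|_4\le \|v\|_\yy\|\nn u\|_4\le BC_2$ using \eqref{4.1} and the first inequality of \eqref{4.16}. Your closing remark on the dependence of the constant on $\mb$ (through $B=2\|v_0\|_\yy$ and Lemma \ref{l4.4}), and in particular its independence of $\xi$, is exactly the point needed later in the proof of Theorem \ref{t1.2}.
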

\begin{proof}
Again by the standard $L^p$-$L^q$ estimates for $(e^{t\tr})_{t\ge0}$ (\cite[Lemma 3.3]{Fujie-I-W}), there exist positive constants $\lm_1,\,C_1$ depending on $\oo$ such that
\bess
\|v(\cdot,t)\|_\yy&\le& \|v_0\|_\yy+\xi\int_0^t\|e^{(t-s)\tr}\nn\cdot(v\nn u)\|_\yy\ds\nm\\[0.5mm]
&\le&\|v_0\|_\yy+C_1\xi\int_0^t\kk(1+(t-s)^{-\frac34}\rr)e^{-\lm_1(t-s)}\|v\nn u\|_4\ds,\ \ t\in(0,T_m).
\eess
In view of \eqref{4.1} and the first inequality of \eqref{4.16}, there exists $C_2=C_2(\mb)>0$ such that
\bess
\|v(\cdot,\sigma)\nn u(\cdot,\sigma)\|_4\le \|v(\cdot,\sigma)\|_\yy\|\nn u(\cdot,\sigma)\|_4\le C_2,\ \ \sigma\in(0,\tf).
\eess
Hence, one can find $C_3=C_3(\mb)$ such that \eqref{4.18} holds.
\end{proof}

\begin{proof}[Proof of Theorem \ref{t1.2}]
From Lemma \ref{l4.5}, we can see that, there is $K_1=K_1(\mb)>0$ such that, for any $\xi>0$,
\bes
\|v(\cdot,t)\|_\yy\le \|v_0\|_\yy+K_1(\mb)\xi,\ \ t\in(0,\tf).\label{4.19}
\ees
Noting that $K_1(\mb)$ is independent of $\xi$. Hence, if
\[\xi\le \df{\|v_0\|_\yy}{2K_1(\mb)},\]
then we have by \eqref{4.19} that $\|v(\cdot,t)\|_\yy\le 3\|v_0\|_\yy/2<B$ for all $ t\in(0,\tf)$.
Therefore, it follows from the definition of $\tf$ that $\tf=T_m$, and
$\|v(\cdot,t)\|_\yy\le B$ for all $t\in(0,T_m)$.
Moreover, Lemmas \ref{l4.1}-\ref{l4.5} holds with $\tf$ replaced by $T_m$. As in the proof of Lemma \ref{l3.6}, one can show that $\|\nn v(\cdot,t)\|_4<\yy$ for any $t\in(0,T_m)$, and hence
\[\|u(\cdot,t)\|_{W^1_4(\oo)}+\|v(\cdot,t)\|_{W^1_4(\oo)}+\|w(\cdot,t)\|_{W^1_4(\oo)}<\yy,\ \ t\in(0,T_m).\]
Thus, we have $T_m=\yy$ due to Lemma \ref{l2.1}. The proof Theorem \ref{t1.2} is completed.
\end{proof}

\section{Proof of Theorem \ref{t1.3}}
  \setcounter{equation}{0} {\setlength\arraycolsep{2pt}

Let $\delta=\min\{1,T_m/2\}$. Similar to the derivation of \eqref{4.3a} and \eqref{4.4a}, we get space-time $L^m$ (res. $L^l$) regularity for $u$ (res. $v$).
\begin{lem}\label{l5.1}
Let $n=2$. Suppose that $\eta_1,\eta_2>0$ and $m,l$ satisfy \eqref{1.8}. Then, there is $C>0$ such that
\bes
\ii u\le C,\ \ \ii v\le C,\ \ t\in(0,T_m),\label{5.1}
\ees
and
\bes
\int_t^{t+\delta}\ii u^m\dx\ds\le C,\ \ \int_t^{t+\delta}\ii v^l\dx\ds\le C,\ \ \ t\in(0,T_m-\delta).\label{5.2}
\ees
\end{lem}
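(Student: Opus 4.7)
The plan is to repeat, with minor cosmetic changes, the argument that produced \eqref{4.3a} and \eqref{4.4a}. The condition \eqref{1.8} plays no role here; only $m,l>1$ and the positivity of $\eta_1,\eta_2$ matter, and the proof is identical for the two species after exchanging roles.

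First I would integrate the $u$-equation in \eqref{1.1} over $\oo$. The homogeneous Neumann boundary condition kills both the diffusion and the taxis term (since $\ii\tr u=0$ and $\ii\nn\cd(u\nn w)=0$), so we are left with
\[\dv\ii u=\eta_1\ii u-\eta_1\ii u^m,\qquad t\in(0,T_m).\]
By Hölder's inequality $\ii u\le|\oo|^{(m-1)/m}\bigl(\ii u^m\bigr)^{1/m}$, hence $\ii u^m\ge|\oo|^{1-m}(\ii u)^m$, and setting $y(t)=\ii u(\cdot,t)$ we get the logistic-type ODI $y'\le\eta_1 y-\eta_1|\oo|^{1-m}y^m$, whose equilibrium is $|\oo|$; a standard comparison yields $y(t)\le\max\{\|u_0\|_1,|\oo|\}$ for all $t\in(0,T_m)$, which is the first half of \eqref{5.1}. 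Repeating the same computation on the $v$-equation (the taxis gradient $\ii\nn\cd(v\nn u)$ also vanishes by Neumann BC) gives the analogous bound $\ii v\le\max\{\|v_0\|_1,|\oo|\}$.

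For \eqref{5.2}, integrate the identity $\dv\ii u=\eta_1\ii u-\eta_1\ii u^m$ over $(t,t+\delta)$:
\[\eta_1\int_t^{t+\delta}\!\!\ii u^m\dx\ds=\eta_1\int_t^{t+\delta}\!\!\ii u\dx\ds+\ii u(\cdot,t)-\ii u(\cdot,t+\delta).\]
Using $\delta\le 1$ and the $L^1$-bound from \eqref{5.1}, the right-hand side is uniformly bounded, which yields the first inequality of \eqref{5.2}. The $v$-bound is obtained in exactly the same way from the $v$-equation.

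There is no essential obstacle: the diffusion and taxis terms integrate to zero under Neumann boundary conditions, and the super-linear absorption term $-\eta_i u^m$ (resp.\ $-\eta_i v^l$) does all the work through a one-line Hölder inequality. The only point to keep in mind is that $\eta_1,\eta_2>0$ is required in order to divide by $\eta_i$ when isolating the space-time $L^m$ (resp.\ $L^l$) norm; without positive logistic coefficients the estimate would not be available.
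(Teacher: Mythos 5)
Your proposal is correct and is exactly the argument the paper has in mind: Lemma \ref{l5.1} is stated with the remark ``Similar to the derivation of \eqref{4.3a} and \eqref{4.4a}'', i.e.\ integrating each equation over $\oo$ (Neumann conditions annihilating the diffusion and taxis terms), using H\"older to obtain the logistic ODI and comparison for the $L^1$ bounds, and then integrating the mass identity over $(t,t+\delta)$ and dividing by $\eta_i$ for the space-time $L^m$ and $L^l$ bounds. Your observation that only $m,l>1$ and $\eta_1,\eta_2>0$ are needed (not the full strength of \eqref{1.8}) is also accurate.
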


Next we derive the space-time $L^2$ estimate for $\nn u$.
\begin{lem}\label{l5.2}
Let $n=2$, $\eta_1,\eta_2>0$ and $m,l$ satisfy \eqref{1.8}. Then, one can find $C>0$ such that
\bes
\int_t^{t+\delta}\ii |\nn u|^2\dx\ds\le C,\ \ \ t\in(0,T_m-\delta).\label{5.3}
\ees
\end{lem}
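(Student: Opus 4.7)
The plan is to imitate the argument of Lemma \ref{l4.1}: test the $u$-equation by $u$, use a two--dimensional Gagliardo--Nirenberg estimate to handle the cross term $\chi\ii u\nn u\cd\nn w=-\tfrac{\chi}{2}\ii u^2\tr w$, and close the resulting inequality through the dichotomy/ODE argument encoded in \eqref{4.6}--\eqref{4.6a}. The only place where the hypotheses of this lemma are used (beyond $m,l\ge 2$) is to guarantee $L^2$ space-time integrability of $u+v$; the stronger part of \eqref{1.8} will only be needed in later lemmas.

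\textbf{Step 1 ($L^2$ bound for $\tr w$ in space-time).} From Lemma \ref{l5.1} and $m,l\ge 2$ we have the pointwise Young bounds $u^2\le 1+u^m$ and $v^2\le 1+v^l$, hence $\ita\ii(u^2+v^2)\le C$ for $t\in(0,T_m-\delta)$. Since $\|w\|_\yy\le Q$, the source $f:=-\lm(u+v)w-\mu w+r$ satisfies $\ita\ii|f|^2\le C$. Repeating the maximal Sobolev regularity argument from the derivation of \eqref{3.2} in Lemma \ref{l3.2} (with $p=1$), one obtains
\[\int_t^{t+\delta}\ii|\tr w|^2\dx\ds\le C,\qquad t\in(0,T_m-\delta).\]

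\textbf{Step 2 (energy identity for $u$ and GN absorption).} Testing the $u$-equation by $u$ and integrating by parts gives
\[\df12\dv\ii u^2+\ii|\nn u|^2+\eta_1\ii u^{m+1}=-\df{\chi}{2}\ii u^2\tr w+\eta_1\ii u^2.\]
By the 2D Gagliardo--Nirenberg inequality and the $L^1$ bound in \eqref{5.1},
\[\|u\|_4^2\le C\bbb(\|\nn u\|_2\|u\|_2+\|u\|_1^2\bbb)\le C\bbb(\|\nn u\|_2\|u\|_2+1\bbb),\]
so Cauchy--Schwarz and Young yield
\[\df{\chi}{2}\ii u^2|\tr w|\le \df{\chi}{2}\|u\|_4^2\|\tr w\|_2\le \df12\ii|\nn u|^2+C\bbb(\ii u^2\bbb)\bbb(\ii|\tr w|^2\bbb)+C\ii|\tr w|^2+C.\]
Since $m\ge 2$ gives $\eta_1 u^2\le \tfrac{\eta_1}{2}u^{m+1}+C$ pointwise, setting $z(t):=\ii u^2+1$ and $h(t):=\ii|\tr w|^2+1$ produces the master inequality
\[z'(t)+\ii|\nn u|^2+\df{\eta_1}{2}\ii u^{m+1}\le C\,z(t)\,h(t)+C,\qquad t\in(0,T_m).\]

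\textbf{Step 3 (closing the argument).} The space-time $L^m$ bound in \eqref{5.2}, together with $m\ge 2$, guarantees for each $t\ge\delta$ a time $t_0\in[t-\delta,t]$ with $z(t_0)\le C$; for $t\in[0,\delta]$ we simply take $t_0=0$. Dropping the two nonnegative terms on the left and applying Gronwall on $[t_0,t]$, Step 1 gives $\int_{t_0}^t h\le C$, and hence $z(t)\le C$ uniformly on $(0,T_m)$. Reinserting this bound into the master inequality and integrating from $t$ to $t+\delta$ produces \eqref{5.3}, where the logistic term plus the lower-order constant on the right-hand side are absorbed using the already-established $\ita\ii u^{m+1}\le C$ (which follows from the same test against $u$ as in \eqref{4.4a} with $m$ replaced by $m+1$, or simply by dropping it to the left).

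\textbf{Main obstacle.} The delicate point is treating the cross term $\ii u^2|\tr w|$: a direct Young split into $u^{m+1}+|\tr w|^{(m+1)/(m-1)}$ would force us to control $\tr w$ in $L^3_{t,x}$ at the critical case $m=2$, which is not available. The Gagliardo--Nirenberg route above converts the bad factor into the product $\|u\|_2^2\|\tr w\|_2^2$, making Step 1 exactly sufficient and keeping the whole argument within the integrability supplied by Lemma \ref{l5.1}.
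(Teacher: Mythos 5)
Your argument is correct and is essentially the paper's own proof: the authors likewise obtain a space-time $L^2$ bound for $\tr w$ via maximal Sobolev regularity (they pass through the $L^{\tilde m}$ bound \eqref{5.4} and then Young's inequality, whereas you work directly at exponent $2$; the $L^{\tilde m}$ version is kept only because it is reused in Lemma \ref{l5.3}) and then reduce to the testing/Gagliardo--Nirenberg/dichotomy-plus-Gronwall scheme of Lemma \ref{l4.1}, exactly as you do. The only other cosmetic difference is that you absorb $\eta_1\ii u^2$ into the retained logistic term $\ii u^{m+1}$, while the proof of Lemma \ref{l4.1} folds it into the product $z(t)h(t)$; both are harmless, and your dropped nonnegative term $\frac{\eta_1}{2}\ii u^{m+1}$ on the left needs no further justification.
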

\begin{proof}
Noting that \eqref{5.2} holds with $m,l\ge2$, there is $C_1>0$ such that
\[\int_t^{t+\delta}\ii |f|^{\tilde m}\dx\ds\le C_1,\ \ t\in(0,T_m-\delta).\]
where $f(x,t)=-\lm(u+v)w-\mu w+r$ and $\tilde m=\min\{m,l\}\ge2$. As in the proof of Lemma \ref{l3.2}, there exists $C_2>0$ such that
\bes
\int_t^{t+\delta}\ii |\tr w|^{\tilde m}\dx\ds\le C_2,\ \ t\in(0,T_m-\delta).\label{5.4}
\ees
Hence, making use of the Young inequality, we have
\bes
\int_t^{t+\delta}\ii |\tr w|^2\dx\ds\le C_3,\ \ t\in(0,T_m-\delta)\label{5.4a}
\ees
for some $C_3>0$. Due to $m\ge2$ and the estimation for $u$ in \eqref{5.2}, the Young inequality says that, for some $C_4>0$,
\bes
\int_t^{t+\delta}\ii u^2\dx\ds\le C_4,\ \ t\in(0,T_m-\delta).\label{5.5a}
\ees
With the $L^1$-estimation of $u$ in \eqref{5.1}, \eqref{5.4a} and \eqref{5.5a} at hand, following the line of the proof of Lemma \ref{l4.1}, we get \eqref{5.3}.
\end{proof}

The following lemma asserts the uniform-in-time $L^4$ boundedness of $\nn w$.
\begin{lem}\label{l5.3}
Let $n=2$, $\eta_1,\eta_2>0$ and $m,l$ satisfy \eqref{1.8}. Then, one can find $C>0$ such that
\bes
\ii |\nn w(\cdot,t)|^4\le C,\ \ \ t\in(0,T_m).\label{5.5}
\ees
\end{lem}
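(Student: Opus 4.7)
The plan is to adapt the proof of Lemma \ref{l4.2}, the only essential difference being that we no longer have a uniform $L^\infty$ bound on $v$; instead we rely on the space-time $L^l$ bound from \eqref{5.2} and the sharper space-time $L^{\tilde m}$ bound on $\Delta w$ from \eqref{5.4}. First I would reproduce the identity
\bess
\df14\dv\ii|\nn w|^4+\mu\ii|\nn w|^4=I_1(t)+I_2(t)+I_3(t)+I_4(t),
\eess
with $I_1,I_2,I_3,I_4$ as in \eqref{4.9}. The terms $I_1$, $I_2$, $I_4$ can be handled exactly as in Lemma \ref{l4.2}: $I_1$ is absorbed into $-\frac34\int|\nabla w|^2|D^2w|^2-\frac14\int|\nabla|\nabla w|^2|^2$ plus a bounded boundary contribution (using \cite[Lemma 2.6(ii)]{WjpW-CAMWA} and \eqref{4.9a}); $I_2$ is controlled via $\|\nabla u\|_2$, which has bounded time-averages by Lemma \ref{l5.2}; $I_4$ is controlled via $r_*$ and is absorbed.

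The heart of the matter is $I_3=-\lambda\int w|\nabla w|^2\nabla v\cdot\nabla w$. As in Lemma \ref{l4.2}, I would integrate by parts to move the derivative off $v$:
\bess
I_3(t)=\lm\ii v|\nn w|^4+\lm\ii vw\,\nn|\nn w|^2\cdot\nn w+\lm\ii vw|\nn w|^2\tr w=:I_{3,1}+I_{3,2}+I_{3,3}.
\eess
For $I_{3,1}$, I would use H\"older with exponents $(l,\,l/(l-1))$ to get $I_{3,1}\le\lm\|v\|_l\|\nabla w\|_{4l/(l-1)}^4$; since $l\ge 3$, one has $4l/(l-1)\le 6$, so combining with \eqref{4.9a} gives $I_{3,1}\le C\|v\|_l\bigl(\int|\nabla w|^2|D^2w|^2\bigr)^{2/3}$, and Young's inequality with exponents $(3,\,3/2)$ yields $I_{3,1}\le\epsilon\int|\nabla w|^2|D^2w|^2+C\|v\|_l^3$. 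The same recipe, after one Cauchy--Schwarz to peel off $\nabla|\nabla w|^2$ and using $w\le Q$, handles $I_{3,2}$ and produces a bound $\epsilon(\int|\nabla|\nabla w|^2|^2+\int|\nabla w|^2|D^2w|^2)+C\|v\|_l^3$.

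The decisive sub-term is $I_{3,3}$, which is where the full condition \eqref{1.8} is used. Here I would apply the three-exponent H\"older inequality with exponents $(l,r,\tilde m)$ satisfying $\frac{1}{l}+\frac{1}{r}+\frac{1}{\tilde m}=1$. The condition $l\ge 3\tilde m/(2\tilde m-3)$ in \eqref{1.8} is exactly $\frac{1}{l}+\frac{1}{\tilde m}\le\frac{2}{3}$, so we may choose $r\le 3$; then $\||\nabla w|^2\|_r\le|\Omega|^{1/r-1/3}\|\nabla w\|_6^2$, giving
\bess
I_{3,3}\le C\|v\|_l\|\nabla w\|_6^2\|\tr w\|_{\tilde m}.
\eess
A three-term Young's inequality with the "$\|\nabla w\|_6^2$"-factor raised to the power $3$ (so that it matches $\int|\nabla w|^6\le\tilde k\int|\nabla w|^2|D^2w|^2$ by \eqref{4.9a}), combined with exponents $\le l$ and $\le\tilde m$ on the remaining factors, yields
\bess
I_{3,3}\le\epsilon\ii|\nn w|^2|D^2w|^2+C\kk(\|v\|_l^p+\|\tr w\|_{\tilde m}^q+1\rr)
\eess
with $p\le l$ and $q\le \tilde m$.

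Once $\epsilon$ is chosen small enough that all "bad" terms are absorbed into the negative $-\int|\nabla w|^2|D^2w|^2$ coming from $I_1$, the surviving inequality takes the form
\bess
\dv\ii|\nn w|^4+\mu\ii|\nn w|^4\le C\kk(\ii|\nn u|^2+\ii|\tr w|^{\tilde m}+\ii v^l+1\rr)=:g(t).
\eess
By \eqref{5.2}, \eqref{5.3} and \eqref{5.4}, $\int_t^{t+\delta}g(s)\ds$ is bounded uniformly in $t\in(0,T_m-\delta)$ (using in particular H\"older in time for the lower-order powers $p\le l$, $q\le\tilde m$). An application of Lemma \ref{l2.2} then gives \eqref{5.5}. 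The single real obstacle is the control of $I_{3,3}$; everything else is a straightforward variant of Lemma \ref{l4.2}, and it is precisely the interplay between the $L^l$-integrability of $v$ and the $L^{\tilde m}$-integrability of $\Delta w$ forced by \eqref{1.8} that closes the estimate.
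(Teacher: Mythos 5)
Your proposal is correct and follows essentially the same route as the paper: the proof reduces to re-estimating $I_3$ from Lemma \ref{l4.2}, with the hypothesis \eqref{1.8} entering exactly through the term $\lm\ii vw|\nn w|^2\tr w$ (via $\frac1l+\frac1{\tilde m}\le\frac23$, equivalently $l\ge 3\tilde m/(2\tilde m-3)$), and the conclusion obtained from the space-time bounds \eqref{5.2}, \eqref{5.3}, \eqref{5.4} together with Lemma \ref{l2.2}. The only cosmetic difference is that you apply H\"older and then Young at the level of norms, whereas the paper applies Young's inequality pointwise (producing $\ii v^3$, $\ii v^{3\tilde m/(2\tilde m-3)}$ and $\ii|\tr w|^{\tilde m}$ directly); the exponent arithmetic is identical.
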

\begin{proof}
The arguments are same with the proof of Lemma \ref{l4.2} except the estimation of $I_3$ in \eqref{4.11}. By use of Young's inequality, \eqref{4.9a} and \eqref{1.8}, we find
\bes
I_3(t)&=&\lm\ii v\nn\cdot(w|\nn w|^2\nn w)\nm\\[0.5mm]
&=&\lm\ii v|\nn w|^4+\lm\ii vw\nn|\nn w|^2\cdot\nn w+\lm\ii vw|\nn w|^2\tr w\nm\\[0.5mm]
&\le&\lm\ii v|\nn w|^4+\lm Q\ii v\kk|\nn|\nn w|^2\rr||\nn w|+\lm Q\ii v|\nn w|^2|\tr w|\nm\\[0.5mm]
&\le&(12\tk)^2\lm^3 \ii v^3+\df{1}{6\tilde k}\ii|\nn w|^6+\df18\ii\kk|\nn|\nn w|^2\rr|^2+2\lm^2Q^2\ii v^2|\nn w|^2\nm\\[0.5mm]
&&+(12\tk)^{1/2}(\lm Q)^{3/2}\ii v^{3/2}|\tr w|^{3/2}\nm\\[0.5mm]
&\le&\kk[(12\tk)^2\lm^3+(12\tk)^{1/2}2\sqrt{2}\lm^3 Q^3\rr] \ii v^3+\df{1}{4\tilde k}\ii|\nn w|^6+\df18\ii\kk|\nn|\nn w|^2\rr|^2\nm\\[0.5mm]
&&+C_1\ii v^{\frac{3\tilde m}{2\tilde m-3}}+C_1\ii|\tr w|^{\tilde m}\nm\\[0.5mm]
&\le&\df18\ii\kk|\nn|\nn w|^2\rr|^2+\df14\ii|\nn w|^2|D^2w|^2+C_1\ii|\tr w|^{\tilde m}+C_2\ii v^l+C_2,\ \ t\in(0,T_m).\qquad\label{5.6}
\ees
Inserting \eqref{4.9b},\eqref{4.10}, \eqref{5.6} and \eqref{4.11a} into \eqref{4.9}, one can find $C_3>0$ such that
\bess
\df14\dv\ii|\nn w|^4+\mu\ii|\nn w|^4\le C_3\kk(\ii|\nn u|^2+\ii|\tr w|^{\tilde m}+\ii|\tr w|^2+\ii v^l+1\rr),\ \ t\in(0,T_m).
\eess
Thanks to the second inequality in \eqref{5.2}, \eqref{5.3}, \eqref{5.4}, \eqref{5.4a} and Lemma \ref{l2.2}, we get \eqref{5.5}. This completes the proof.
\end{proof}

Noting that, we only use the uniform-in-time $L^1$ regularity of $u$ and $L^4$ estimate of $\nn w$ in the proof of Lemma \ref{l4.3}. Hence, by the similar arguments, we obtain the uniform-in-time $L^\yy$ boundedness of $u$.
\begin{lem}\label{l5.4}
Assume that $n=2$, $\eta_1,\eta_2>0$ and $m,l$ satisfy \eqref{1.8}. Then, there is $C>0$ such that
\bes
\|u(\cdot,t)\|_\yy\le C,\ \ \ t\in(0,T_m).\label{5.7}
\ees
\end{lem}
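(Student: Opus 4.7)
The plan is to mimic the proof of Lemma \ref{l4.3}, replacing the a priori facts from Section 4 by their Section 5 counterparts. Define $H(T):=\sup_{t\in(0,T)}\|u(\cdot,t)\|_\yy$ for $T\in(0,T_m)$; since $u\in C(\bar\oo\times[0,T_m))$, we have $H(T)<\yy$ for each such $T$, and the goal is to show $H(T)$ is bounded by a constant independent of $T$.

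First I would write $u$ via the variation-of-constants formula associated with the first equation in \eqref{1.1}, namely
\bess
 u(\cdot,t)=e^{t\tr}u_0-\chi\!\int_0^t\! e^{(t-s)\tr}\nn\!\cdot\!(u\nn w)\ds+\eta_1\!\int_0^t\! e^{(t-s)\tr}(u-u^m)\ds,
\eess
and, after passing to $L^\yy$ norms, apply the standard $L^p$--$L^q$ estimates for the Neumann heat semigroup (\cite[Lemma 3.3]{Fujie-I-W}). The purely kinetic contribution is harmless since $m>1$ guarantees that $(u-u^m)_+$ is bounded by a universal constant, so $\|e^{(t-s)\tr}(u-u^m)_+\|_\yy\le C$, and its integral against $e^{-\lm_1(t-s)}$ is uniformly bounded. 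For the taxis term, the semigroup estimate yields, with some $\lm_1,C_1>0$,
\bess
 \|u(\cdot,t)\|_\yy\le C_1+C_1\chi\int_0^t\!\kk(1+(t-s)^{-5/6}\rr)e^{-\lm_1(t-s)}\|u\nn w\|_3\ds,\ \ t\in(0,T).
\eess

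The second step is to control $\|u\nn w\|_3$. By H\"older with exponents $12$ and $4$ (noting $1/12+1/4=1/3$), together with Lemma \ref{l5.3} and the uniform $L^1$ bound \eqref{5.1},
\bess
 \|u\nn w\|_3\le\|u\|_{12}\|\nn w\|_4\le C_2\|u\|_\yy^{11/12}\|u\|_1^{1/12}\le C_3\,H(T)^{11/12},
\eess
the interpolation $\|u\|_{12}\le\|u\|_\yy^{11/12}\|u\|_1^{1/12}$ being precisely the logarithmic convexity of $L^p$ norms. Inserting this into the previous display and using that $0<5/6<1$, so that $\int_0^\yy(1+\sg^{-5/6})e^{-\lm_1\sg}\,{\rm d}\sg<\yy$, gives
\bess
 H(T)\le C_4+C_4 H(T)^{11/12},\ \ T\in(0,T_m),
\eess
for some $C_4>0$ independent of $T$. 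Young's inequality absorbs the superlinear term with exponent $11/12<1$ to conclude $H(T)\le C_5$ uniformly in $T\in(0,T_m)$, which is \eqref{5.7}.

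The only step requiring care is treating the super-logistic term $-\eta_1 u^m$: I would not invoke any decay estimate from it, but simply exploit the sign by writing $u-u^m\le(u-u^m)_+\le C$ and bounding the corresponding semigroup integral by a constant. All the remaining ingredients---the $L^1$ control of $u$ from Lemma \ref{l5.1} and the $L^4$ control of $\nn w$ from Lemma \ref{l5.3}---are already available, so no further estimate is needed. The argument is essentially the same bootstrap that produced \eqref{4.13}, and no new obstacle appears.
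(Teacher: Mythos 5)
Your proposal is correct and takes essentially the same route as the paper: the paper's proof of Lemma \ref{l5.4} consists precisely of rerunning the semigroup bootstrap from Lemma \ref{l4.3} with the uniform $L^1$ bound \eqref{5.1} in place of \eqref{4.3a} and the $L^4$ bound on $\nn w$ from Lemma \ref{l5.3} in place of \eqref{4.12}, including the same interpolation $\|u\|_{12}\le \|u\|_\yy^{11/12}\|u\|_1^{1/12}$ leading to $H(T)\le C+CH(T)^{11/12}$ and the same sign-based handling of $\eta_1(u-u^m)$ through $(u-u^m)_+$. Every ingredient you invoke is exactly what the paper uses, so no gap arises.
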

\begin{proof}
Following the line in the proof of Lemma \ref{l4.3}, using the $L^1$-estimate of $u$ in \eqref{5.1} and \eqref{5.5}, one can easily prove this lemma.
\end{proof}

The coming lemma provides the $L^4$ estimate for $\nn u$.
\begin{lem}\label{l5.5}
Assume that $n=2$, $\eta_1,\eta_2>0$ and $m,l$ satisfy \eqref{1.8}. Then, there is $C>0$ such that
\bes
\ii |\nn u(\cdot,t)|^4\le C,\ \ \ t\in(0,T_m),\label{5.8}
\ees
and
\bes
\int_t^{t+\delta}\ii |\nn u(\cdot,t)|^6\le C,\ \ \ t\in(0,T_m-\delta).\label{5.8a}
\ees
\end{lem}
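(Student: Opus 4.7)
The plan is to mirror the proof of Lemma \ref{l4.4} with $n = p = 2$: derive the differential inequality \eqref{3.16} for $\ii|\nn u|^4\dx$ via Remark \ref{r3.1}, and apply Lemma \ref{l2.2} together with suitable space-time bounds on $\nn w$ and $\tr w$. The invocation of Remark \ref{r3.1} is legitimate because Lemma \ref{l5.4} supplies the uniform $L^\yy$-bound on $u$ it requires, and the logistic term $\eta_1(u - u^m)$ only contributes an a priori bounded source that is easily absorbed into the right-hand side of \eqref{3.16}.

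The first task is to upgrade the regularity of $w$ beyond what Lemma \ref{l5.3} provides. Combining the uniform $L^\yy$-bounds on $u$ (Lemma \ref{l5.4}) and $w$ (Lemma \ref{l2.1}) with the space-time $L^l$-bound on $v$ from \eqref{5.2} -- together with H\"older's inequality in space-time and the fact that \eqref{1.8} forces $l \ge 3$ -- I obtain $\int_t^{t+\de}\ii v^3\dx\ds \le C$ and hence $\int_t^{t+\de}\ii |f|^3\dx\ds \le C$ for the source $f = -\lm(u+v)w - \mu w + r$ of the $w$-equation. The maximal Sobolev regularity argument used in Lemma \ref{l3.2}, now with $T_m$ in place of $T$ and exponent $3$ in place of $p+1$, then yields
\begin{equation*}
\int_t^{t+\de}\ii|\tr w|^3\dx\ds \le C, \quad t \in (0, T_m - \de).
\end{equation*}
For the accompanying space-time $L^6$-bound on $\nn w$, I invoke the 2D Gagliardo-Nirenberg inequality $\|\nn w\|_6 \le C\|D^2 w\|_3^{1/5}\|\nn w\|_4^{4/5}$. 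Raising to the sixth power, substituting the uniform $L^4$-bound on $\nn w$ from Lemma \ref{l5.3}, and using H\"older in time (the resulting exponent $6/5$ on $\|D^2 w\|_3$ is below $3$) give $\int_t^{t+\de}\ii|\nn w|^6\dx\ds \le C$.

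With these two space-time estimates on $w$, the inequality \eqref{3.16} at $p = 2$ (legitimized by Remark \ref{r3.1} and Lemma \ref{l5.4}) takes the form
\begin{equation*}
\df14\dv\ii|\nn u|^4 + \ii|\nn u|^4 + \df1{32}\ii|\nn u|^2|D^2 u|^2 \le C\kk(\ii|\nn w|^6 + \ii|\tr w|^3 + 1\rr) =: g(t),
\end{equation*}
with $\int_t^{t+\de}g(s)\ds \le C$ by the preceding step. Lemma \ref{l2.2} then delivers \eqref{5.8}. Integrating the above differential inequality over $(t, t+\de)$ and combining \eqref{5.8} with the bound on $\int g$ yields $\int_t^{t+\de}\ii|\nn u|^2|D^2 u|^2\dx\ds \le C$; together with \eqref{3.5a} at $p = 2$ and Lemma \ref{l5.4}, this produces \eqref{5.8a}.

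The main obstacle relative to Lemma \ref{l4.4} is the absence of a uniform $L^\yy$-bound on $v$. Where Lemma \ref{l4.4} obtained the $\nn w$ and $\tr w$ bounds directly from $L^p$-$L^q$ semigroup estimates with an $L^\yy$ source, here one must funnel everything through the space-time $L^l$-regularity of $v$ supplied by the logistic dissipation (Lemma \ref{l5.1}). The quantitative input $l \ge 3$ forced by \eqref{1.8} is exactly what places $f$ in space-time $L^3$, which is in turn the minimal exponent needed in the Gagliardo-Nirenberg interpolation to reach space-time $L^6$ for $\nn w$ and thereby feed \eqref{3.16} at $p = 2$.
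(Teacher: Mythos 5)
Your proposal is correct and follows essentially the same route as the paper: a space-time $L^3$ bound for $f=-\lm(u+v)w-\mu w+r$ (using $l\ge3$ from \eqref{1.8} and the uniform bound on $u$ from Lemma \ref{l5.4}) upgraded to $\tr w$ via the maximal-regularity argument of Lemma \ref{l3.2}, then a Gagliardo--Nirenberg passage to a space-time $L^6$ bound on $\nn w$, then \eqref{3.16} at $p=2$ (justified exactly as you say by Remark \ref{r3.1} and Lemma \ref{l5.4}) combined with Lemma \ref{l2.2} for \eqref{5.8}, and finally \eqref{3.5a} for \eqref{5.8a}. The only cosmetic difference is the interpolation step: you use $\|\nn w\|_6\le C\|D^2w\|_3^{1/5}\|\nn w\|_4^{4/5}$ together with Lemma \ref{l5.3} and H\"older in time, whereas the paper uses $\|\nn w\|_6^6\le C\big(\|\tr w\|_3^3\|w\|_\yy^3+\|w\|_\yy^6\big)$ with $\|w\|_\yy\le Q$, which reaches the same space-time $L^6$ bound without invoking Lemma \ref{l5.3} or a time-H\"older step.
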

\begin{proof}
By \eqref{5.7} and the second estimate in \eqref{5.2} with $l\ge3$ as well as Young's inequality, there is $C_1>0$ such that
\[\int_t^{t+\delta}\ii |f(x,s)|^3\dx\ds\le C_1,\ \ t\in(0,T_m-\delta).\]
where $f=-\lm(u+v)w-\mu w+r$. As in the proof of Lemma \ref{l3.2}, we find $C_2>0$ such that
\bes
\int_t^{t+\delta}\ii |\tr w|^3\dx\ds\le C_2,\ \ t\in(0,T_m-\delta).\label{5.9}
\ees
In view of the Gagliardo-Nirenberg inequality, there holds
\bes
\ii|\nn w(\cdot,t)|^6=\|\nn w(\cdot,t)\|_6^6&\le& C_3(\|\tr w(\cdot,t)\|_3^3\|w(\cdot,t)\|_\yy^3+\|w(\cdot,t)\|_\yy^6)\nm\\[0.5mm]
&\le&C_3Q^3\ii|\tr w(\cdot,t)|^3+C_3Q^6,\ \ t\in(0,T_m)\nm
\ees
for some $C_3>0$. This combined with \eqref{5.9} yields that, there is $C_4>0$ such that
\bes
\int_t^{t+\delta}\ii |\nn w|^6\dx\ds\le C_4,\ \ t\in(0,T_m-\delta).\label{5.10}
\ees
In view of \eqref{5.7}, following the proof of Lemma \ref{l3.3} with $n=p=2$, we get (cf. \eqref{3.16})
\bes
&&\dv\ii|\nn u(\cdot,t)|^4+\ii|\nn u(\cdot,t)|^4+\ii |\nn u(\cdot,t)|^2|D^2u(\cdot,t)|^2\nm\\[0.5mm]
&\le&C_5\kk(\ii|\nn w(\cdot,t)|^6+\ii|\tr w(\cdot,t)|^3+1\rr):=G(t),\ \ t\in(0,T_m)\label{5.11}
\ees
with some $C_5>0$. Due to \eqref{5.9} and \eqref{5.10}, it is easy to see that, there exists $C_6>0$ such that
\bes
\int_t^{t+\delta}G(s)\ds\le C_6,\ \ t\in(0,T_m-\delta).\label{5.12}
\ees
Making use of Lemma \ref{l2.2} with \eqref{5.11} and \eqref{5.12}, we have
\[\ii|\nn u(\cdot,t)|^4\le C_7,\ \ t\in(0,T_m).\]
This shows \eqref{5.8}. Moreover, integrating \eqref{5.11} from $t$ to $t+\delta$ for $t\in(0,T_m-\delta)$ firstly and using \eqref{5.8}, \eqref{5.12} secondly, it follows that, for some $C_7>0$,
\[\int_t^{t+\delta}\ii |\nn u|^2|D^2u|^2\dx\ds\le C_7,\ \ t\in(0,T_m-\delta).\]
Finally, by \eqref{3.5a} with $n=p=2$ and \eqref{5.7}, we get \eqref{5.8a}.
\end{proof}

In light of \eqref{5.8}, we show the uniform-in-time boundedness of $v$.
\begin{lem}\label{l5.6}
Let $n=2$, $\eta_1,\eta_2>0$ and $m,l$ satisfy \eqref{1.8}. Then, there exists $C>0$ such that
\bes
\|v(\cdot,t)\|_\yy\le C,\ \ \ t\in(0,T_m).\label{5.13}
\ees
\end{lem}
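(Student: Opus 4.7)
The plan is to mirror the pattern of Lemmas \ref{l4.3}--\ref{l4.5} (an $L^p$-energy estimate for $v$, followed by a semigroup bootstrap to $L^\yy$), but with the logistic dissipation $-\eta_2v^l$ replacing the smallness of $\xi$ that was crucial in Lemma \ref{l4.5}. The exponent $l\ge 3$ from \eqref{1.8} is precisely what makes the required absorption work.

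First I would upgrade the regularity of $\tr u$ to a uniform space-time $L^3$ bound. Using the $L^\yy$ bound on $u$ (Lemma \ref{l5.4}), the space-time $L^3$ bound on $\tr w$ and the space-time $L^6$ bound on $\nn w$ derived inside the proof of Lemma \ref{l5.5}, together with the space-time $L^6$ bound on $\nn u$ from \eqref{5.8a}, each term of the forcing $F:=-\chi\nn\cdot(u\nn w)+\eta_1(u-u^m)$ in $u_t=\tr u+F$ sits in $L^3((t,t+\delta)\times\oo)$ uniformly in $t$; maximal Sobolev regularity then yields $\int_t^{t+\delta}\ii|\tr u|^3\dx\ds\le C$. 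Next I would test the $v$-equation with $v^{p-1}$ for $p=2(l-1)\ge 4$. Integration by parts gives
\[
\df{1}{p}\dv\ii v^p+(p-1)\ii v^{p-2}|\nn v|^2=-\df{\xi(p-1)}{p}\ii v^p\tr u+\eta_2\ii v^p-\eta_2\ii v^{p+l-1},
\]
and Young's inequality with the conjugate exponents $(p+l-1)/p$ and $(p+l-1)/(l-1)=3$ provides $\int v^p|\tr u|\le\varepsilon\int v^{p+l-1}+C_\varepsilon\int|\tr u|^3$. Choosing $\varepsilon$ small, the first term is absorbed by the logistic dissipation, and the lower-order $\eta_2\int v^p$ is handled similarly; this leaves
\[
\dv\ii v^p+c\ii v^{p+l-1}\le C\bigl(1+\ii|\tr u|^3\bigr),
\]
and Lemma \ref{l2.2}, fed by the Step-1 bound, yields $\sup_{t\in(0,T_m)}\int v^p\le C$ together with $\int_t^{t+\delta}\int v^{p+l-1}\le C$.

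The enhanced integrability of $v$ feeds back through the $w$-equation: $f=-\lm(u+v)w-\mu w+r$ gains a higher $L^q$ class, hence so does $\tr w$ by maximal regularity and, revisiting Step 1, so does $\tr u$. A finite number of iterations of the $v$-energy estimate at progressively larger $p$ (admissible because the available space-time $L^q$-norm of $\tr u$ now exceeds $3$) pushes $\sup_t\int v^p\le C$ to any desired fixed $p$. The $L^\yy$ bound is then closed from the Duhamel formula
\[
v(t)=e^{t\tr}v_0-\xi\int_0^t e^{(t-s)\tr}\nn\cdot(v\nn u)\ds+\eta_2\int_0^t e^{(t-s)\tr}(v-v^l)\ds
\]
along the lines of Lemma \ref{l4.5}: after a $-\mu v$ shift that restores exponential decay of the semigroup, the last integral is controlled by the pointwise bound $(v-v^l)_+\le C_l$, while the taxis integral is estimated via $\|v\nn u\|_q\le\|v\|_r\|\nn u\|_4$ for some $q>2$, using the large-$r$ bound of $v$ just obtained together with Lemma \ref{l5.5}.

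The main difficulty is organizing the bootstrap so that it terminates after finitely many rounds with $p$ large enough to close the semigroup step; in each round the admissible exponent $p$ is constrained through the Young-exponent identity $(p+l-1)/(l-1)=q$ by the current space-time $L^q$-bound on $\tr u$, and one must verify that the $L^q$-integrability gained from the previous round of $v$-regularity is strictly larger than what was consumed, so that the iteration genuinely improves. The borderline case $l=3$ (where the very first $L^p$-estimate only delivers $p=4$, which is critical for the subsequent semigroup step) is exactly where this bookkeeping needs the most care.
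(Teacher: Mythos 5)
Your plan is workable, but it takes a genuinely different---and much heavier---route than the paper. The paper's proof of Lemma \ref{l5.6} is a one-shot repetition of the Lemma \ref{l4.3} scheme applied to $v$: setting $H(T)=\sup_{t\in(0,T)}\|v(\cdot,t)\|_\yy$, the Duhamel representation and the smoothing estimate for $e^{t\tr}\nn\cdot$ give $\|v(\cdot,t)\|_\yy\le C+C\xi\int_0^t(1+(t-s)^{-3/4})e^{-\lm_1(t-s)}\|v\nn u\|_4\,\ds$ (or the analogous estimate at exponent $3$), and the decisive point is the interpolation $\|v\nn u\|_3\le\|v\|_{12}\|\nn u\|_4\le C\|v\|_\yy^{11/12}\|v\|_1^{1/12}\|\nn u\|_4\le C\,H(T)^{11/12}$, which uses only the mass bound \eqref{5.1} and the uniform-in-time $L^4$ bound \eqref{5.8} on $\nn u$; the sublinear relation $H(T)\le C+C\,H(T)^{11/12}$ then closes at once, the logistic term being absorbed exactly as in Lemma \ref{l4.3} via the pointwise bound on $(v-v^l)_+$. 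Your proposal instead runs a multi-round bootstrap: your Step 1 (space-time $L^3$ control of $\tr u$) is sound and creates no circularity---it is precisely \eqref{5.14}, which the paper derives \emph{after} this lemma for the $\nn v$ estimate; your $v^{p-1}$-testing with $p=2(l-1)$ and absorption into the dissipation $-\eta_2\ii v^{p+l-1}$ is correct; and your borderline worry at $l=3$ (first round gives only $p=4$, hence the critical $q=2$ in the semigroup step) does resolve, since the first round also yields $v\in L^{3(l-1)}$ in space-time, which upgrades $\tr u$ to space-time $L^6$ and makes the second round deliver $p=(6-1)(l-1)\ge10>4$. What you missed is that all of this machinery is unnecessary: the very interpolation device you invoke at the end ($\|v\|_{r}$ controlled through $\|v\|_\yy$ and $\|v\|_1$) lets the Duhamel step close directly from \eqref{5.1} and \eqref{5.8} with no $L^p$ energy estimate for $v$ at all---and, notably, without using $l\ge3$ anywhere in this particular lemma, the hypotheses \eqref{1.8} entering only through Lemmas \ref{l5.1}--\ref{l5.5}. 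Your bootstrap does buy extra information (uniform $L^p$ bounds for $v$ and improved regularity of $\tr w$ and $\tr u$) that the paper's argument never needs here, at the price of considerable bookkeeping.
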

\begin{proof}
Thanks to uniform-in-time $L^4$ estimate for $\nn u$ in \eqref{5.8} and the $L^1$ regularity for $v$ in \eqref{5.1}, parallel to the arguments in the proof of Lemma \ref{l4.3}, we easily get the desired result.
\end{proof}

\begin{proof}[Proof of Theorem \ref{t1.3}]
We first note that, $u$ satisfies
\bess
 \left\{\begin{array}{lll}
 u_t=\tr u+F(x,t),&x\in\oo,\ \ t\in(0,T_m),\\[1mm]
 \pl_\nu u=0,\ \ &x\in\pl\oo, \ \ t\in(0,T_m),\\[1mm]
 u(x,0)=u_0, &x\in\oo,
 \end{array}\right.
\eess
where $F(x,t)=\nn u\cdot\nn w+u\tr w+\eta_1(u-u^m)$. In view of Young's inequality and \eqref{5.7}, \eqref{5.8a}, \eqref{5.9} and \eqref{5.10}, there exists $C_1>0$ such that
$\int_t^{t+\delta}\ii |F(x,s)|^3\dx\ds\le C_1$ for all $t\in(0,T_m-\delta)$.
Similar to the proof of Lemma \ref{l3.2}, we get that, for some $C_2>0$,
\bes
\int_t^{t+\delta}\ii|\tr u|^3\dx\ds\le C_2,\ \ t\in(0,T_m-\delta).\label{5.14}
\ees

With \eqref{5.13} at hand, by using the similar arguments in the proof of Lemma \ref{l3.3} with $n=p=2$ (cf. \eqref{3.16}), one can obtain that, for $C_3>0$,
\bess
\dv\ii|\nn v|^4+\ii|\nn v|^4\le C_3\kk(\ii|\nn u|^6+\ii|\tr u|^3+1\rr).
\eess
This in conjunction with \eqref{5.8a} and \eqref{5.14} and Lemma \ref{l2.2}, we have
\bes
\ii|\nn v(\cdot,t)|^4\le C_4,\ \ t\in(0,T_m) \label{5.15}
\ees
for some $C_4>0$. From the second estimate in \eqref{2.1}, \eqref{5.5}, \eqref{5.7}, \eqref{5.8}, \eqref{5.13} and \eqref{5.15}, there exists $C_5>0$ such that
\bess
\|u(\cdot,t)\|_{W^1_4(\oo)}+\|v(\cdot,t)\|_{W^1_4(\oo)}+\|w(\cdot,t)\|_{W^1_4(\oo)}\le C_5,\ \ t\in(0,T_m).
\eess
It follows from Lemma \ref{l2.1} that $T_m=\yy$. This completes the proof.
\end{proof}

\end{document}